\documentclass[11pt]{article}
\usepackage{multicol}
\usepackage{graphics}
\usepackage{amsmath}
\usepackage{amsfonts}
\usepackage{amssymb}
\usepackage[english]{babel}
\usepackage{multirow}
\usepackage{subfigure}
\usepackage[thmmarks,amsmath]{ntheorem}
\usepackage{epsfig}
\usepackage{epstopdf}
\usepackage{graphicx,color}
\usepackage{enumerate}
\usepackage{multicol}
\usepackage{makecell}
\usepackage{lscape}
\usepackage{rotating}
\usepackage{bm}
\usepackage{color}

\usepackage[title,titletoc,toc]{appendix}
\usepackage[left, pagewise]{lineno}

\usepackage{amsmath}

\allowdisplaybreaks[4]




\numberwithin{equation}{section}
\newtheorem{theorem}{Theorem}[section]

\newtheorem{corollary}[theorem]{Corollary}

\newtheorem{lemma}[theorem]{Lemma}

\newtheorem{proposition}[theorem]{Proposition}
\newtheorem{remark}[theorem]{Remark}

\newenvironment{proof}[1][Proof]{\textbf{#1.} }{\ \rule{0.5em}{0.5em}}

%
\usepackage{geometry}
 \geometry{
 a4paper,
 left=22mm,
 right=20mm,
 top=30mm,
 bottom=32mm,
 }

\begin{document}

\begin{center}
\smallskip
{\Large\bf  Analysis of a parabolic-hyperbolic hybrid population model}

\vspace{0.1in}

Qihua Huang$^\dag$, Minglong Wang$^\ddag$, Yixiang Wu$^\S$


\footnote{$^\dag$School of Mathematics and Statistics, Southwest University, Chongqing 400715, China. (qihua@swu.edu.cn). The research of this author is  supported by the National Natural Science Foundation of China (12271445).} \\

\footnote{$^\ddag$School of Mathematics and Statistics, Southwest University, Chongqing 400715, China (mlwangmath@163.com).}\\

\footnote{$^\S$Corresponding author. Department of Mathematics, Middle Tennessee State University, Murfreesboro, TN 37132, USA (yixiang.wu@mtsu.edu).}

\end{center}

\vspace{-0.6in}

\noindent\hrulefill
\vspace{-0.0in}

\noindent {\bf Abstract:}
This paper is concerned with the global dynamics of a hybrid parabolic-hyperbolic model describing populations with distinct dispersal and sedentary stages. We first establish the global well-posedness of solutions, prove a  comparison principle, and demonstrate the asymptotic smoothness of the solution semiflow. Through the spectral analysis of the linearized system, we derive and characterize the net reproductive rate $\mathcal{R}_{0}$. Furthermore,  an explicit relationship between $\mathcal{R}_{0}$ and the principal eigenvalue of the linearized system is analyzed. Under appropriate monotonicity assumptions, we show that $\mathcal{R}_{0}$ serves as a threshold parameter that completely determines the stability of steady states of the system.  More precisely, when $\mathcal{R}_{0}<1$, the trivial equilibrium is globally asymptotical stable, while when $\mathcal{R}_{0}>1$, the system is uniformly persistent and there is a positive equilibrium which is unique and globally asymptotical stable.

\medskip

\noindent {\bf Keywords:}
Hybrid model, integrated semigroup,  comparison principle, net reproductive rate,  global stability.

\medskip

\noindent {\bf AMS subject classifications:} 92D25, 35K57, 35L50, 35B40

\noindent\hrulefill

\section{Introduction}
Understanding how the interaction between life history traits, vital rates, and dispersal patterns influences population persistence is an essential research question in spatial ecology. Mathematical models, particularly reaction-diffusion equations \cite{Cantrell2003, Murray2002a, Murray2002b, Okubo2001, Shigesada1997} and  {first-order  hyperbolic } equations \cite{iannelli1995mathematical, inaba2017age,li2020age, Ruan2018, metz2014dynamics, Webb1985}, are essential tools for investigating these dynamics. It is important to note that classical reaction-diffusion models typically assume that growth and dispersal processes occur simultaneously in both time and space.   {In contrast, classical first-order hyperbolic equations used to model age- or size-structured populations}  generally assume a homogeneously mixed population, which neglect the spatial dispersal of individuals.
The above assumptions may not accurately reflect the life cycle of many organisms. Actually, species such as plants and certain aquatic organisms, like zebra mussels and corals, have distinct dispersal and stationary stages in their life cycles. For example, plants   {spread} to new habitats through seed dispersal, and once the seeds land, they grow into trees or grasses. The life cycle of zebra mussels   {includes} a dispersive larval stage and sedentary juvenile and adult stages. During breeding seasons, adult mussels release planktonic larvae that   {are transported by water currents} until they settle on the bottom of water bodies (dispersal stage), where they  attach to fixed substrates and mature into juveniles and adults (sedentary stage).

  {To address  the limitations mentioned above}, researchers have developed hybrid systems to investigate the spatiotemporal dynamics of such species. These hybrid systems   {typically} combine reaction-diffusion (or reaction-diffusion-advection) equations, which model the dispersal phase, with ordinary differential equations \cite{Lutscher2006, Pachepsky2005, Seo2011} (or difference equations \cite{Huang2017, Lewis2012}), which govern the growth phase. Some studies integrate spatial movement and age growth within a single equation of the form \( u_t + u_a - \mathcal{A}u \), where \( \mathcal{A} \) represents either a diffusion operator~\cite{ducrot2011travelling,ducrot2021integrated,magal2004eventual,walker2018some,webb2008population} or an integral operator~\cite{ducrot2024age1,ducrot2024age2,kang2021nonlinear}.

In this work, we study the following parabolic-hyperbolic hybrid system, which models the spatiotemporal dynamics of a population with distinct dispersal and stationary stages:
\begin{equation}\label{model}
  \left\{
   \begin{array}{lll}
\partial_{t} u=d\Delta u +B(x, t)-(m(x)+e(x))u-c(x) u^{2}, \quad &x\in\Omega, t>0, \\
\partial_{t}w+\partial_{a}w=-\mu(x,a, P(t))w ,\quad &x\in\Omega, t>0,a\in (0,a_{\max}), \\
\partial_\nu u=0,~\quad &x\in \partial \Omega, t>0, \\
 w(x,0,t)=\chi(x,P(t))e(x)u(x,t),  \quad &x\in\Omega, t>0, \\
 u(x,0)=u_{0}(x), \quad &x\in\Omega, \\
 w(x,a,0)=w_{0}(x,a),\quad  &x\in\Omega, a\in (0, a_{\max}).
   \end{array}
  \right.
  \end{equation}
In this system, $u(x, t)$ represents  the spatial density of dispersing individuals at location $x\in\Omega$ and time $t$. The domain $\Omega$ denotes the population habitat, which is a bounded region in  $\mathbb{R}^{n}$  with a smooth boundary $\partial \Omega$.  The function $w(x, a, t)$ represents the density of sedentary individuals with age $a\in [0, a_{\max})$ at location $x$ and time $t$. The first equation of system \eqref{model} describes the movement, recruitment, mortality, and settlement of dispersing individuals.  The term $d\Delta u$ models the random movement, where  $d$ is the diffusion coefficient and $\Delta $ denotes the Laplacian operator. The function
\begin{equation}\label{B}
B(x, t):=\int_{0} ^{a_{\max}}\beta(x,a, P(t))w(x,a,t)da
\end{equation}
represents the total contributions of all stationary individuals to recruitment, where  $\beta(x,a, P)$ is the spatially varying reproduction rate of stationary individuals of age $a$. This rate depends on the total number of stationary individuals $P(t)$, which reflects   competition for resources. Specifically, $P(t)$ is given by
$$
P(t):=\int_0^{a_{\max}} \int_\Omega w(x, a, t)dxda, \quad\forall t\ge 0.
$$
 The parameter $m(x)$ is the natural mortality rate. The term $c(x) u^2$ denotes mortality due to intra-specific competition, where  $c(x)$ is the competition coefficient. The parameter $e(x)$ is the settlement rate of dispersing individuals. The second equation of system \eqref{model}  describes the growth and mortality of   stationary individuals, structured by age, where $\mu(x, a, P(t))$ is the mortality rate of  sedentary individuals.  The third equation imposes a homogeneous Neumann boundary condition, indicating that no individuals can enter or exit  the habitat through the boundary,  where  $\nu$ is the outward unit normal vector to $\partial \Omega$.
 The fourth equation describes the transition of  dispersing individuals to stationary individuals once they settle. Here, $\chi(x, P(t))\in (0, 1]$ represents the proportion of dispersing individuals that successfully transition to the sedentary stage. Finally,  $u_{0}(x)$ and $w_{0}(x,a)$ represent the initial spatial distributions of dispersing individuals and stationary individuals, respectively.

 The parabolic-hyperbolic population model \eqref{model} extends the models developed  by Deng and Huang \cite{Deng2019,Deng2021}. In these earlier works, the authors assumed that the transition rate
$\chi(x, P(t))\equiv 1$. Additionally, in \cite{Deng2019}, they did not consider the dependence of the reproduction rate $\beta$ and the mortality rate $\mu $ on the total population abundance $P$. In their works \cite{Deng2019,Deng2021}, Deng and Huang defined weak solutions (in integral form) for these models by introducing smooth test functions. They then established the existence and uniqueness of these weak solutions using the monotone method based on a comparison principle. Furthermore, they investigated the criteria for population persistence through four interrelated measures. Their study also included numerical simulations to explore the influence of population dispersal, reproduction, settlement, and habitat boundaries on population persistence.

The  objective of this work is to analyze the global dynamics of the hybrid system \eqref{model}, focusing on the existence, uniqueness, boundedness, and smoothness of solutions, the analysis of the net reproductive rate $R_0$, as well as the stability of equilibria.   { We highlight two major mathematical challenges in analyzing system \eqref{model}.
First, proving the compactness or asymptotic smoothness of the model's solutions is not straightforward, as the first-order hyperbolic equation in the model is spatially dependent.}
Second, the dependence of
$\beta$, $\mu$, and $\chi$ on the total number of sedentary individuals, $P(t)$, results in a strong coupling between the reaction-diffusion and hyperbolic equations. Therefore, this system cannot be transformed into a delay differential equation by solving the hyperbolic equation using the characteristic method.   { These challenges pose significant obstacles} in analyzing the dynamic behavior of the system \eqref{model}.

To analyze model \eqref{model}, we will adopt the integrated semigroup approach. Since $\chi$ depends on $P$, it is not possible to reformulate \eqref{model} as a densely defined abstract Cauchy problem, which is a classical method   { for studying} evolution equations \cite{Pazy1983}. Instead, \eqref{model} takes the form of a non-densely defined Cauchy problem, for which integrated semigroup theory provides a suitable analytical framework (see \cite{arendt1987resolvent,arendt1987vector,da1987differential,kellerman1989integrated,Ruan2018,neubrander1988integrated,thiemea1990integrated,xiao2013cauchy} and references therein). Alternatively, one could integrate the second equation in \eqref{model} along characteristic lines, reformulating the system into a coupled problem involving a reaction-diffusion equation and an integral equation. Nevertheless, we opt for the integrated semigroup method to leverage its mature theoretical foundation, such as well-established results on the existence, uniqueness, and positivity of solutions.

Importantly, the solution obtained via the integrated semigroup method also satisfies the system derived from integration along characteristic lines (see Proposition \ref{prop_int}), a fact we exploit to establish the asymptotic smoothness of solutions in Section 4. It seems that the general theory of eventual compactness for integrated semigroups developed by Magal and Thieme \cite{magal2004eventual} does not directly apply to our model. Our approach is   { inspired} by ideas from Webb \cite{Webb1985}, particularly the decomposition of solutions as in Lemma \ref{asymptotically smooth theory}. 

One goal of this work is to demonstrate the effectiveness of monotone methods in analyzing the asymptotic behavior of solutions to first-order hyperbolic equations. Monotone techniques, based  on comparison principles, have long played an essential role in the study of the dynamics of ordinary differential equations and reaction-diffusion systems (see, e.g., \cite{smith1995monotone}). More recently, Magal et al. \cite{Magal2019} extended the theory of monotone semiflows and established a comparison principle for abstract semi-linear Cauchy problems with non-dense domains. This principle has been adopted by Ducrot et al.  \cite{ducrot2024age2} to construct upper/lower solutions and study the global stability of positive equilibrium. 
In Section 2.4, we apply the abstract results of Magal et al. \cite{Magal2019} directly to our model. Our contribution in this direction is to show that the monotone solutions,  as constructed in Theorem \ref{Increasing and Decreasing Solution}, converge to an equilibrium under the condition that the $\omega$-limit set is compact (see Theorem \ref{Theorem_c}). Additional applications of the comparison principle and Theorem \ref{Theorem_c}   are presented in Section 6, where we analyze the global asymptotic behavior of the solutions in terms of the net reproductive rate $\mathcal{R}_0$.

  {The paper is organized as follows.}  In the next section, we prove the existence and uniqueness of global solutions using the integral semiflow approach. We  also establish the positivity of solutions and the comparison principle. In section 3, we investigate the boundedness and global existence properties of the solution. Section 4 provides a proof of the asymptotic smoothness of the solution semiflow. In section 5, through spectral analysis of the generator for the strongly continuous semigroup associated with the linearized system at the zero equilibrium, we introduce a biologically meaningful   {net} reproductive rate
 $\mathcal{R}_0$ and characterize its precise relationship with the principal eigenvalue of the linearized system. In section 6, we demonstrate the global stability of steady states, which is determined by the    {net} reproductive  rate $\mathcal{R}_0$: when   {$\mathcal{R}_0$} is below unity, the trivial equilibrium is globally asymptotically stable; when   {$\mathcal{R}_0$} exceeds unity, we prove the existence and global stability of a positive steady state. Finally, in section 7, we summarize our findings and propose directions for future research.

\section{Well-posedness}\label{Well-posedness}

In this section, we  establish the well-posedness of system \eqref{model} by applying the integrated semigroup theory (see  \cite{arendt1987resolvent,arendt1987vector,da1987differential,kellerman1989integrated,Ruan2018,neubrander1988integrated,thiemea1990integrated,xiao2013cauchy}).

Throughout the paper, let $Y:=C(\bar{\Omega},\mathbb{R})$ be  the space of continuous functions from $\bar{\Omega}$ to $\mathbb{R}$ with the  usual supremum norm $\|\cdot\|_{\infty}$, and denote  $Y_{+}:=C(\bar{\Omega},\mathbb{R}_{+})$. For $q\in Y$, we denote $\bar q=\max_{x\in\bar\Omega} q(x)$ and $\underline q=\min_{x\in\bar\Omega} q(x)$.
We impose the following assumptions:

\begin{itemize}
\item[(A1)] $e, m, c\in Y$, and $e(x), m(x), c(x)>0$ for all $x\in\bar\Omega$.
\item[(A2)] $\chi\in C(\mathbb{R}, Y_+)$, and for any $\zeta>0$ and $P_1, P_2\in [-\zeta, \zeta]$, there exists $K>0$ such that
$$
\|\chi(\cdot, P_1)-\chi(\cdot, P_2)\|_{\infty}\le K |P_1-P_2|.
$$
Moreover, $0<\chi(x, P)\le 1$ for all $x\in\bar\Omega$ and $P\ge 0$.
\item[(A3)] $\mu, \beta\in C([0, a_{\max})\times \mathbb{R}, Y_+)\cap L^\infty((0, a_{\max})\times \mathbb{R}, Y_+)$, and for any $\zeta>0$ and $P_1, P_2\in [-\zeta, \zeta]$, there exists $K>0$ such that
$$
|\mu(x, a, P_1)-\mu(x,a, P_2)|\le K |P_1-P_2|
$$
uniformly for $(x, a)\in\bar\Omega\times [0, a_{\max})$.
\end{itemize}

\subsection{Local existence of solutions}

In order to apply integral semigroup theory, we need to  rewrite system \eqref{model} as an abstract Cauchy problem.
Let $Z:=L^{1}((0,a_{\max}),Y)$ equipped with the norm
$$
\|w\|_{Z}=\int_{0}^{a_{\max}}\|w(a)\|_\infty da, \quad \forall w\in Z.
$$
 For convenience, we  write $w(x,a)$ as $w(a)$ to hide the spatial variable $x$ for $w\in Z$. 

Let $A_1$ be a linear operator on $Y$ defined by
$$
A_1 u=d\Delta u,\quad\forall u\in D(A_1)
$$
with domain
$$
D(A_1)=\{u\in \cap_{p\ge 1} W^{2,p}(\Omega): \Delta u\in Y \ \ \text{and}\ \   \partial_\nu u=0 ~\text{on}~\partial\Omega\}.
$$
It is well-known that $A_1$ generates a strongly continuous semigroup $\{T_{A_1}(t)\}_{t\ge 0}$ on $Y$.

 Define
$$
X=Y\times Z\quad\text{and}\quad X_{0}=\{0_Y\}\times Z.
$$
Fix $\mu_0\in L^\infty(\bar\Omega\times (0, a_{\max}))$ and denote $\underline\mu_0=\text{essinf} \ \mu_0$. Define a closed linear operator $A_2:D(A_2)\subset X\rightarrow X$  by
\begin{equation*}
A_2\begin{gathered}\begin{pmatrix}
0\\ w
\end{pmatrix}
=\begin{pmatrix}
-w(0)\\ -\partial_a w-\mu_0 w
\end{pmatrix}
\end{gathered}
\end{equation*}
with domain
 $$
 D(A_2)=\{0_{Y}\}\times W^{1,1}((0,a_{\max}),Y),
 $$
where $W^{1,1}((0,a_{\max}),Y)$ is the Sobolev space consisting of functions $w\in Z$ with norm
$$
\|w\|_{Z}+\|\partial_a w\|_Z<\infty.
$$
Clearly, the closure of $D(A_2)$ is
$$
\overline{D(A_2)}=\{0_{Y}\}\times Z=X_0 \subsetneqq X,
$$
so $D(A_2)$ is not dense in $X$.



The following lemma shows that $A_2$ is a Hille-Yosida operator.
\begin{lemma}\label{H-Y}
If $\lambda \in \mathbb{R}$ with $\lambda >-\underline\mu_0$, then  $\lambda \in \rho(A_2)$ (the resolvent set of $A_2$) and
\begin{equation}\label{re_estimate}
\|(\lambda I-A_2)^{-n}\|\leq \frac{1}{(\lambda+\underline{\mu}_0)^{n}},\quad \forall n\ge1.
\end{equation}
\end{lemma}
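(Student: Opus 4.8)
The plan is to solve the resolvent equation explicitly and then read off the power estimate from an iterated Volterra kernel. Fix $\lambda\in\mathbb{R}$ with $\lambda>-\underline{\mu}_0$ and set $\nu:=\lambda+\underline{\mu}_0>0$. Given $(\alpha,f)\in X=Y\times Z$, I look for $(0,w)\in D(A_2)$ with $(\lambda I-A_2)(0,w)=(\alpha,f)$. Unravelling the definition of $A_2$, this is equivalent to the boundary value problem
$$
\partial_a w(a)+(\lambda+\mu_0(\cdot,a))\,w(a)=f(a)\ \text{ in }Z,\qquad w(0)=\alpha\ \text{ in }Y,
$$
whose unique solution is given by the variation-of-constants formula
$$
w(\cdot,a)=e^{-\lambda a}\,\Pi(\cdot,0,a)\,\alpha+\int_0^a e^{-\lambda(a-\sigma)}\,\Pi(\cdot,\sigma,a)\,f(\cdot,\sigma)\,d\sigma,\qquad \Pi(\cdot,\sigma,a):=e^{-\int_\sigma^a\mu_0(\cdot,s)\,ds}.
$$
First I would verify that this $w$ indeed lies in $W^{1,1}((0,a_{\max}),Y)$: since $0<\Pi\le 1$, $\alpha\in Y$, and $f\in L^1((0,a_{\max}),Y)$, the right-hand side defines a $Y$-valued, Bochner-absolutely-continuous function of $a$ with $L^1$ derivative, and it satisfies $w(0)=\alpha$; uniqueness follows from Gronwall applied to the homogeneous problem. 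Hence $\lambda\in\rho(A_2)$ with $(\lambda I-A_2)^{-1}(\alpha,f)=(0,w)$.

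For the power estimate, write $(\lambda I-A_2)^{-1}(\alpha,f)=(0,\,R_\lambda\alpha+S_\lambda f)$, where $R_\lambda:Y\to Z$ and the Volterra operator $S_\lambda:Z\to Z$ are read off from the formula above. Using the cocycle identity $\Pi(\cdot,\sigma,\tau)\,\Pi(\cdot,\tau,a)=\Pi(\cdot,\sigma,a)$ together with $e^{-\lambda(a-\tau)}e^{-\lambda(\tau-\sigma)}=e^{-\lambda(a-\sigma)}$, an induction on $n$ yields the closed forms
$$
S_\lambda^n g(\cdot,a)=\int_0^a\frac{(a-\sigma)^{n-1}}{(n-1)!}\,e^{-\lambda(a-\sigma)}\,\Pi(\cdot,\sigma,a)\,g(\cdot,\sigma)\,d\sigma,\qquad S_\lambda^{n-1}R_\lambda\alpha(\cdot,a)=\frac{a^{n-1}}{(n-1)!}\,e^{-\lambda a}\,\Pi(\cdot,0,a)\,\alpha,
$$
for $n\ge 1$ (with $S_\lambda^0=I$), so that $(\lambda I-A_2)^{-n}(\alpha,f)=(0,\,S_\lambda^{n-1}R_\lambda\alpha+S_\lambda^n f)$. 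Bounding $\|\cdot\|_\infty$ under the integral sign, using $0<\Pi(\cdot,\sigma,a)\le e^{-\underline{\mu}_0(a-\sigma)}$ pointwise, applying Fubini, and invoking the elementary identity $\int_0^\infty\frac{t^{n-1}}{(n-1)!}e^{-\nu t}\,dt=\nu^{-n}$ (the truncation to $(0,a_{\max})$ only decreases the integral), one obtains $\|S_\lambda^n\|_{Z\to Z}\le\nu^{-n}$ and $\|S_\lambda^{n-1}R_\lambda\|_{Y\to Z}\le\nu^{-n}$. Adding these gives
$$
\big\|(\lambda I-A_2)^{-n}(\alpha,f)\big\|_X\le\nu^{-n}\big(\|\alpha\|_\infty+\|f\|_Z\big)=\nu^{-n}\,\|(\alpha,f)\|_X,
$$
which is precisely the bound \eqref{re_estimate}.

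The genuinely routine parts are the Gronwall uniqueness argument, the induction identifying the iterated Volterra kernels, and the Fubini/Gamma-function estimate. The step that requires care — and which I expect to be the main obstacle — is showing that the variation-of-constants formula actually delivers an element of $D(A_2)=\{0_Y\}\times W^{1,1}((0,a_{\max}),Y)$, namely that $w(\cdot,a)\in Y=C(\bar\Omega)$ for each $a$ and that $a\mapsto w(\cdot,a)$ is strongly absolutely continuous; this is where the structure of $Y$ and the regularity of $\mu_0$ must be used carefully, in contrast to the purely algebraic/estimative remainder of the argument.
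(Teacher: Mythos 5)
Your proposal is correct, and its core — solving $(\lambda I-A_2)(0,w)=(\alpha,f)$ explicitly by variation of constants and estimating the resulting formula in the $X$-norm — is exactly what the paper does. Where you diverge is in how the powers $n\ge 2$ are handled: since the bound for $n=1$ already has constant $1$, the paper simply relies on submultiplicativity, $\|(\lambda I-A_2)^{-n}\|\le\|(\lambda I-A_2)^{-1}\|^{n}\le(\lambda+\underline{\mu}_0)^{-n}$, whereas you compute the iterated Volterra kernels $(a-\sigma)^{n-1}/(n-1)!$ explicitly and integrate against the Gamma function. Your route is correct (the cocycle identity and the induction are fine, and the Fubini estimate does give $\nu^{-n}$), but it buys nothing extra here — it would only be needed if the $n=1$ estimate carried a constant $M>1$, in which case the iterated-kernel computation is the standard way to recover $M\nu^{-n}$ rather than $M^{n}\nu^{-n}$. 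Two small remarks: the paper does not assume $\mu_0\ge 0$ (only $\mu_0\in L^{\infty}$, with $\underline{\mu}_0=\operatorname{essinf}\mu_0$ possibly negative), so your assertion $0<\Pi\le 1$ is not available in general; the correct and sufficient bound, which you in fact use in the main estimate, is $e^{-\lambda(a-\sigma)}\Pi(\cdot,\sigma,a)\le e^{-(\lambda+\underline{\mu}_0)(a-\sigma)}$, and it also handles the $L^1$-membership when $a_{\max}=\infty$. Finally, the $W^{1,1}$ membership that you flag as the main obstacle is dispatched in the paper with a one-line remark; it is indeed routine once the exponential decay above is noted, so your emphasis there is harmless but not where the substance lies.
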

\begin{proof}
Suppose that  $\lambda\in\mathbb{R}$ with $\lambda >-\underline{\mu}_0$. For any $(y,\vartheta)\in X$ and $(0_Y, w)\in X$, it is easy to see that
\begin{equation}\label{resolvent formula}
\begin{gathered}
\begin{pmatrix}
y\\ \vartheta
\end{pmatrix}
=(\lambda I-A_2)
\begin{pmatrix}
0_Y\\ w
\end{pmatrix}
 \Longleftrightarrow  w(a)=e^{-\int_{0}^{a}(\mu_0(l)+\lambda) dl}y +\int_{0}^{a}e^{-\int_{s}^{a}(\mu_0(l)+\lambda) dl}\vartheta(s)ds.
\end{gathered}
\end{equation}
If $(y, \vartheta) =(0_Y, 0_Z) $, then $w=0_Z$ and so $ker (\lambda I-A_2)=\{(0_Y, 0_Z) \}$. Since $\lambda >-\underline{\mu}_0$, it is easy to verify that $(0_Y,w)\in D(A_2)$ for any $(y,\vartheta)\in X$, and so $ran (\lambda I-A_2)=X$. Therefore, the operator $\lambda I-A_{2}$ is invertible and $\lambda\in \rho(A_2)$.

By \eqref{resolvent formula}, for any  $(y,\vartheta)^{T}\in X$, we have
\begin{equation*}
\begin{aligned}
\left\|(\lambda I-A_2)^{-1}\begin{pmatrix}
y\\ \vartheta
\end{pmatrix}\right\|_{X} &\leq \int_{0}^{a_{\max}}\|e^{-\int_{0}^{a}(\mu_0(l)+\lambda) dl} y\|_\infty da +\int_{0}^{a_{\max}}\int_{0}^{a}\|e^{-\int_{s}^{a}(\mu_0(l)+\lambda) dl}\vartheta(s)\|_{\infty}dsda
\\
 &\leq \int_{0}^{a_{\max}}e^{-(\lambda+\underline{\mu}_0) a}da\|y\|_\infty+\int_{0}^{a_{\max}}\int_{0}^{a}e^{-(\lambda+\underline{\mu}_0) (a-s)}\|\vartheta(s)\|_{\infty}dsda
 \\
 & \leq \frac{1}{\lambda+\underline{\mu}_0}\|y\|_\infty+\frac{1}{\lambda+\underline{\mu}_0}\|\vartheta\|_{Z}
 \\
 & =\frac{1}{\lambda+\underline{\mu}_0}\left\|\begin{pmatrix}
y\\ \vartheta
\end{pmatrix}\right\|_{X}.
\end{aligned}
\end{equation*}
Thus, \eqref{re_estimate} holds.
\end{proof}


Let
$$
\mathbb{X}:=Y\times X,\quad \mathbb{X}_{0}:=Y\times \overline{D(A_2)}=Y\times X_{0},
$$
and define linear operator $A: D(A)\subset \mathbb{X}\rightarrow \mathbb{X}$  by
\begin{equation}\label{A}
A\begin{gathered}\begin{pmatrix}
u\\ \begin{pmatrix}
0_Y\\ w
\end{pmatrix}
\end{pmatrix}
=\begin{pmatrix}
A_1 u\\ A_2\begin{pmatrix}
0_Y\\w
\end{pmatrix}
\end{pmatrix}
=\begin{pmatrix}
d\Delta u \\ -w(0) \\-\partial_a w-\mu_0 w
\end{pmatrix}
\end{gathered},
\end{equation}
with
$$
D(A)=D(A_{1})\times D(A_2)\subset Y\times X\quad\text{and} \quad \overline{D(A)}=\mathbb{X}_0\subsetneqq \mathbb{X}.
$$

By Lemma \ref{H-Y} and the fact that $A_1$ generates a strongly continuous semigroup, $A$ is also a Hille-Yosida operator. Since  $\overline{D(A)}=\mathbb{X}_{0}$, we introduce $A_{0}$, the part of $A$ in $\mathbb{X}_{0}$:
$$
A_{0}=A~~\text{on}~~D(A_{0})=\{\bm x\in D(A): A\bm x\in \mathbb{X}_{0}\}=\{(u, (0_Y, w)) \in D(A):\ w(0)=0_Y\}.
$$
Then $A_{0}$ is densely defined in $\mathbb{X}_{0}$, and
it follows from \cite[Theorem 1.5.3]{Pazy1983} that the part $A_{0}$ of $A$ generates a strongly continuous semigroup $\{T_{A_0}(t)\}_{t\geq 0}$ on $\mathbb{X}_{0}$. It is easy to see that for $(u, (0_Y, w)) \in D(A_0)$,
$$
T_{A_0}(t)
\begin{pmatrix}
u\\ \begin{pmatrix}
0_Y\\w
\end{pmatrix}
\end{pmatrix} =
\begin{pmatrix}
T_{A_1}(t)u, \left(0_Y, \
\left\{
\begin{array}{lll}
e^{-\int_{a-t}^a\mu_0(l)dl}w(a-t), \ \ \ &\text{if}\ a\ge t\\
0_Y, \ \ \ &\text{if} \ a< t
\end{array}
\right.
\right)
\end{pmatrix} , \quad\forall t\ge 0.
$$
Clearly, $A_0=(A_1, A_{2_0})$, where $A_{2_0}$ is the part of $A_2$ on $X_0$. Moreover, $A_{2_0}$ generates a strongly continuous semigroup $\{T_{A_{2_0}}(t)\}_{t\ge 0}$ on $X_0$.


Since $A$ is a Hille-Yosida operator on $\mathbb{X}$, it generates an integrated semigroup $\{S_{A}(t)\}_{t\geq 0}$  on $\mathbb{X}$, defined by
 $$
 S_A(t)=(\lambda I-A_{0})\int_{0}^{t} T_{A_0}(s)ds(\lambda I-A)^{-1},\quad \forall \lambda\in \rho(A).
 $$
 Let $g\in C([0, \tau], \mathbb{X})$ with $\tau>0$. Define
 $$
(S_A*g)(t):= \int_0^t S_A(t-s)g(s)ds.
 $$
Then $S_A*g$ is continuously differentiable, and $(S_A*g)(t)\in D(A)$ for any $t\in [0, \tau]$. Introduce the notation
$$
(S_A\diamond g)(t):=\frac{d}{dt}(S_A*g)(t).
$$
Then $(S_A\diamond g)(t)\in \mathbb{X}_0$, and the following approximation formula holds
$$
(S_A\diamond g)(t)=\lim_{\lambda\to\infty} \int_0^t T_{A_0}(t-s) \lambda (\lambda I-A)^{-1}g(s) ds, \ \ \forall t\in [0, \tau].
$$
We refer to \cite{Ruan2018} for the above results on integrated semigroups.

Define the nonlinear map $F: \mathbb{X}_{0}\rightarrow \mathbb{X}$ by
\begin{equation*}
F\begin{gathered}\begin{pmatrix}
u\\ \begin{pmatrix}
0_Y\\ w
\end{pmatrix}
\end{pmatrix}
=\begin{pmatrix}
f(u,w)\\ \begin{pmatrix}
g_1(u,\tilde w)\\g_2(\cdot, w)
\end{pmatrix}
\end{pmatrix},
\end{gathered}
\end{equation*}
where
$$
f(u,w)=\int_{0} ^{a_{\max}}\beta(\cdot, a, \tilde w) w(a)da-(m+e)u-c u^{2},
$$
and
$$
g_1(u,\tilde w)=\chi(\cdot, \tilde w)eu, \ \ g_2(a, w)=\mu_0(\cdot, a) w-\mu(\cdot,a, \tilde w) w
$$
for any $u\in Y$ and $w\in Z$ with $\tilde w:=\int_0^{a_{\max}} \int_\Omega w(x,a) dxda$.


With the above terminology, we can transform \eqref{model} into  the following  non-densely
defined abstract Cauchy problem
\begin{equation}\label{Cauchy problem}
 \begin{aligned}
 \frac{d\bm x}{dt}=A\bm x(t)+F(\bm x(t)),\quad t\geq 0,
   \end{aligned}
  \end{equation}
with
\begin{equation*}
 \begin{aligned}
 \bm x(t)= \begin{pmatrix}
u(\cdot, t)\\ \begin{pmatrix}
0_Y\\ w(\cdot,\cdot, t)
\end{pmatrix}
\end{pmatrix}
\quad \text{and} \quad \bm x(0)=\bm x_0:
=\begin{pmatrix}
u_{0}\\ \begin{pmatrix}
0_Y\\w_{0}
\end{pmatrix}
\end{pmatrix}\in \mathbb{X}_{0}.
   \end{aligned}
  \end{equation*}

Under the assumptions (A1)-(A3),  the  function  $F$ is Lipschitz continuous on bounded sets of $\mathbb{X}_{0}$ in the sense that for each $\zeta>0$, there exists a positive constant $K$ such that
\begin{equation}\label{Lipschitz continuous}
 \begin{aligned}
\|F(\bm x_2)-F(\bm x_1)\|_\mathbb{X} \leq K \|\bm x_2-\bm x_1\|_{\mathbb{X}},
   \end{aligned}
 \end{equation}
for any $\bm x_1, \bm x_2\in \mathbb{X}_{0}$ with $\|\bm x_1\|_{\mathbb{X}}\leq \zeta$ and $\|\bm x_2\|_{\mathbb{X}}\leq \zeta$. Based on the above abstract semigroup formulation, by \cite[Theorem 5.2.7]{Ruan2018}, we have the following result.

\begin{theorem}\label{solution semiflow}
Suppose that (A1)-(A3) hold. Then there exists a uniquely determined  continuous semiflow $\{U(t)\}_{t\geq 0}$ on $\mathbb{X}_{0}$ such that  for each $\bm x_0\in \mathbb{X}_{0}$,  the Cauchy problem \eqref{Cauchy problem} has a unique integrated solution (or mild solution) $U(\cdot)\bm x_0\in C([0,t_{\max}),\mathbb{X}_{0})$. That is, $U(t)\bm x_0$ satisfies
$$
\int_{0}^{t}U(s)\bm x_0 ds\in D(A),\quad \forall t\in [0,t_{\max}),
$$
\text{and}
\begin{equation}\label{integrated solution}
 \begin{aligned}
U(t)\bm x_0=\bm x_0+A\int_{0}^{t}U(s)\bm x_0 ds+\int_{0}^{t}F(U(s)\bm x_0)ds, \quad \forall t\in [0,t_{\max}).
   \end{aligned}
  \end{equation}
   Here, either  $t_{\max}=\infty$ or $t_{\max}<\infty$ and $\|U(t)\bm x_0\|_\mathbb{X}\to \infty$ as $t\to t_{\max}$. Moreover, the solution $U(t)\bm x_0$ has the following variation of constant form:
\begin{eqnarray}\label{Upsi}
U(t)\bm x_0&=&S_A'(t)\bm x_0+ S_A\diamond F(U(\cdot+s)\bm x_0)(t-s)\\
&=& T_{A_0}(t)\bm x_0+\lim_{\lambda\to\infty}\int_0^t T_{A_0}(t-s)\lambda (\lambda I-A)^{-1} F(U(s)\bm x_0) ds, \ \ \forall t\in [0, t_{\max}). \nonumber
\end{eqnarray}
\end{theorem}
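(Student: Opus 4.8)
The plan is to obtain Theorem~\ref{solution semiflow} as an application of the abstract existence theory for non-densely defined Cauchy problems, namely \cite[Theorem 5.2.7]{Ruan2018}: the whole content of the statement (local existence, uniqueness, the integrated-solution identity \eqref{integrated solution}, the blow-up alternative, the continuous semiflow property, and the variation-of-constants formula \eqref{Upsi}) is exactly the conclusion of that result once its two standing hypotheses are verified in our concrete setting, namely (i) $A$ is a Hille--Yosida operator on $\mathbb{X}$ with $\overline{D(A)}=\mathbb{X}_0$, and (ii) $F:\mathbb{X}_0\to\mathbb{X}$ is Lipschitz continuous on bounded subsets of $\mathbb{X}_0$.

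For (i), I would argue that block-diagonal combinations of Hille--Yosida operators are Hille--Yosida. Since $A_1$ generates the strongly continuous semigroup $\{T_{A_1}(t)\}_{t\ge0}$ on $Y$, there are $M_1\ge1$ and $\omega_1\in\mathbb{R}$ with $(\omega_1,\infty)\subset\rho(A_1)$ and $\|(\lambda I-A_1)^{-n}\|\le M_1/(\lambda-\omega_1)^n$; Lemma~\ref{H-Y} gives the analogous bound for $A_2$ with constant $1$ and threshold $-\underline\mu_0$. Because $A=\mathrm{diag}(A_1,A_2)$ on $\mathbb{X}=Y\times X$, for $\lambda>\max\{\omega_1,-\underline\mu_0\}$ we get $\lambda\in\rho(A)$, $(\lambda I-A)^{-n}=\mathrm{diag}((\lambda I-A_1)^{-n},(\lambda I-A_2)^{-n})$, and hence $\|(\lambda I-A)^{-n}\|\le \max\{M_1,1\}/(\lambda-\max\{\omega_1,-\underline\mu_0\})^n$; together with the already-recorded fact $\overline{D(A)}=\mathbb{X}_0$ this shows $A$ is Hille--Yosida.

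For (ii), i.e. the estimate \eqref{Lipschitz continuous}, I would work on a ball $\|\bm x_i\|_{\mathbb{X}}\le\zeta$, writing $\bm x_i=(u_i,(0_Y,w_i))$ and $\tilde w_i=\int_0^{a_{\max}}\!\int_\Omega w_i$, and first note the nonlocal argument is well-behaved: $|\tilde w_1-\tilde w_2|\le|\Omega|\,\|w_1-w_2\|_Z\le|\Omega|\,\|\bm x_1-\bm x_2\|_{\mathbb{X}}$, so all $\tilde w_i$ stay in a fixed bounded interval. Then each component of $F$ is estimated by a standard telescoping split: for the recruitment term one writes $\beta(\cdot,a,\tilde w_1)w_1-\beta(\cdot,a,\tilde w_2)w_2=\beta(\cdot,a,\tilde w_1)(w_1-w_2)+(\beta(\cdot,a,\tilde w_1)-\beta(\cdot,a,\tilde w_2))w_2$ and uses the $L^\infty$-bound together with the $P$-Lipschitz bound on $\beta$ from (A3); the term $(m+e)u$ uses (A1); the quadratic term uses $|cu_1^2-cu_2^2|\le 2\bar c\,\zeta\,\|u_1-u_2\|_\infty$; the component $g_1$ is handled with the same telescoping in $\chi$ via (A2); and $g_2$ with the $L^\infty$-bounds on $\mu_0,\mu$ and the $P$-Lipschitz bound on $\mu$ from (A3). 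Summing the component bounds yields \eqref{Lipschitz continuous} with $K=K(\zeta)$.

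Having checked (i) and (ii), I would invoke \cite[Theorem 5.2.7]{Ruan2018}, together with the properties of $\{S_A(t)\}$, $S_A*g$ and $S_A\diamond g$ recalled above, to conclude the theorem verbatim. I expect the only genuinely technical point to be the bookkeeping in step (ii) — uniform control on bounded sets of the nonlocal coupling through $\tilde w$ and of the quadratic term $cu^2$ — whereas the passage to the conclusion is a direct citation; it is also worth emphasizing, as motivation, that $F$ does \emph{not} map $\mathbb{X}_0$ into $\mathbb{X}_0$ (its $g_1$-slot is generally nonzero), so the classical semigroup theory of \cite{Pazy1983} does not apply and the non-densely defined framework is essential.
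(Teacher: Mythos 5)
Your proposal is correct and follows essentially the same route as the paper: the paper likewise verifies that $A$ is a Hille--Yosida operator (via Lemma~\ref{H-Y} together with the fact that $A_1$ generates a strongly continuous semigroup on $Y$), notes that under (A1)--(A3) the map $F$ is Lipschitz on bounded subsets of $\mathbb{X}_0$ as in \eqref{Lipschitz continuous}, and then cites \cite[Theorem 5.2.7]{Ruan2018} to obtain the theorem verbatim. You merely supply the bookkeeping (the block-diagonal resolvent estimate and the telescoping Lipschitz bounds) that the paper leaves implicit, which is fine.
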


\subsection{Integration along characteristic lines}

In this subsection, we demonstrate that \eqref{Upsi} yields solutions in a form derived via integration along the characteristic lines. This representation of the solution will facilitate the analysis of the asymptotic smoothness of the semiflow induced by \eqref{model} later.

\begin{proposition}\label{prop_int}
Suppose that (A1)-(A3) hold. For any $\bm x_0\in \mathbb{X}_{0}$,
let $U(\cdot)\bm x_0\in  C([0,t_{\max}),\mathbb{X}_{0})$ be the unique integrated solution of  \eqref{model}. Then $u\in C([0, t_{\max}), Y)\cap C^1((0, t_{\max}), Y)\cap C((0, t_{\max}), D(A_1))$ and  satisfies
\begin{equation}\label{uequ}
\partial_{t} u=d\Delta u +\int_{0} ^{a_{\max}}\beta(x,a, P)\omega(x,a,t)da  -(m(x)+e(x))u-c(x) u^{2}, \quad x\in\Omega, t>0.
\end{equation}
And, $w$ satisfies
\begin{equation}\label{wint}
    w(x, a, t)=
      \begin{cases}
    w_0(x, a-t) e^{-\int_0^t \mu(x, a-s, P(t-s))ds}, \ &\text{if}\ a> t, \\
\chi(x, P(t-a))e(x) u(x, t-a) e^{-\int_0^a
\mu(x, a-s, P(t-s))ds}, \ &\text{if}\ a<t,
\end{cases}
\end{equation}
for any $x\in\bar\Omega$ and $t\in [0, t_{\max})$, where $P(t)=\int_0^{a_{\max}}\int_\Omega w(x, a, t)dxda$.
\end{proposition}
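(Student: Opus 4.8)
The plan is to unpack the abstract integrated solution $U(t)\bm x_0 = (u(\cdot,t), (0_Y, w(\cdot,\cdot,t)))$ from \eqref{Upsi} component by component, treating the parabolic and hyperbolic parts separately but remembering that they are linked through $P(t)$. For the first component, I would observe that the nonlinear term $f(u,w)$, once $U(\cdot)\bm x_0$ is known to be continuous on $[0,t_{\max})$ with values in $\mathbb{X}_0$, defines a function $t\mapsto f(u(\cdot,t), w(\cdot,\cdot,t))$ that is continuous from $[0,t_{\max})$ into $Y$. Since the first component of the abstract variation-of-constants formula reduces to $u(t) = T_{A_1}(t)u_0 + \int_0^t T_{A_1}(t-s) f(u(\cdot,s),w(\cdot,\cdot,s))\,ds$ (the coupling between components of $A$ is triangular, so the $u$-equation decouples at the level of the semigroup), this is precisely the mild-solution representation for the abstract parabolic problem $u' = A_1 u + f$. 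Standard analytic-semigroup regularity for $\{T_{A_1}(t)\}$ on $Y$ (the Laplacian with Neumann conditions generates an analytic semigroup; see \cite{Pazy1983}) then upgrades $u$ to $C((0,t_{\max}), D(A_1)) \cap C^1((0,t_{\max}), Y)$ and gives \eqref{uequ}, after identifying $\tilde w(t) = P(t)$ and $\int_0^{a_{\max}}\beta(\cdot,a,\tilde w)w(a)\,da$ with the integral term $B(x,t)$.

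For the second component, the strategy is to use the explicit formula for $T_{A_0}(t)$ displayed just before Theorem \ref{solution semiflow} together with the limit form of $S_A\diamond$. Writing out $w(\cdot,\cdot,t) = [T_{A_0}(t)\bm x_0]_w + \lim_{\lambda\to\infty}\int_0^t [T_{A_0}(t-s)\lambda(\lambda I - A)^{-1} F(U(s)\bm x_0)]_w\,ds$: the first piece transports $w_0$ along characteristics with the reference attenuation $e^{-\int_{a-t}^a \mu_0}$, and the limit term — because $\lambda(\lambda I - A)^{-1}$ concentrates its mass at the boundary $a=0$ as $\lambda\to\infty$ — produces the boundary inflow $g_1(u(\cdot,s),\tilde w(s)) = \chi(\cdot,P(s))e\,u(\cdot,s)$ entered at age $0$ and then transported, while the remaining volume term $g_2(a,w) = (\mu_0 - \mu(\cdot,a,\tilde w))w$ exactly cancels the reference rate $\mu_0$ and replaces it with the true rate $\mu(\cdot,a,P)$. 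Carrying out this bookkeeping along the characteristic $a - s = \text{const}$ yields, for $a > t$, $w(x,a,t) = w_0(x,a-t)\exp(-\int_0^t \mu(x,a-\sigma,P(t-\sigma))\,d\sigma)$ and, for $a < t$, $w(x,a,t) = \chi(x,P(t-a))e(x)u(x,t-a)\exp(-\int_0^a \mu(x,a-\sigma,P(t-\sigma))\,d\sigma)$, which is \eqref{wint}. The cleanest rigorous route is probably not to take the $\lambda\to\infty$ limit by hand but instead to verify directly that the $w$ defined by the right-hand side of \eqref{wint} (with the already-constructed $u$ and the resulting $P$) satisfies the integrated identity \eqref{integrated solution} in its $w$-component, and then invoke uniqueness from Theorem \ref{solution semiflow}; this sidesteps the delicate interchange of limit and integral.

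A point needing care is consistency at $a = t$: one must check that both branches of \eqref{wint} agree (in an $L^1$ sense, so a measure-zero mismatch is harmless) with the boundary condition $w(x,0,t) = \chi(x,P(t))e(x)u(x,t)$, and that $P(t) = \int_0^{a_{\max}}\int_\Omega w$ is well-defined and continuous so that all the composed nonlinearities make sense — this follows since $w(\cdot,\cdot,t)\in Z$ by construction and the map $t\mapsto P(t)$ inherits continuity from $t\mapsto w(\cdot,\cdot,t)\in Z$. The main obstacle, and where I would spend the most effort, is justifying that the abstract limit term $(S_A\diamond F(U(\cdot)\bm x_0))(t)$ in its $w$-component really evaluates to the characteristic-line integral with the boundary term appearing as stated; this is the one step that is not purely formal, and the verification-by-uniqueness trick described above is the way I would make it rigorous, reducing everything else to Fubini, a change of variables along characteristics, and continuity of the data guaranteed by (A1)--(A3).
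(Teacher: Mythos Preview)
Your treatment of the $u$-component is correct and matches the paper's: extract the first component of \eqref{Upsi}, pass to the limit using $\lambda(\lambda I-A_1)^{-1}\to I$ strongly, obtain the mild formula $u(t)=T_{A_1}(t)u_0+\int_0^tT_{A_1}(t-s)f(u(s),w(s))\,ds$, and then invoke parabolic regularity.

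For the $w$-component, the paper takes your ``first route'' rather than the uniqueness trick. It computes the $\lambda\to\infty$ limit by hand: using the explicit resolvent \eqref{resolvent formula} and the transport formula for $T_{A_{2_0}}(t)$, the boundary contribution $g_1$ is picked out via the delta-sequence identity $\lim_{\lambda\to\infty}\lambda\int_{c_1}^{c_2}e^{-\lambda(s-c_1)}h(s)\,ds=h(c_1)$, while the volume term $g_2$ survives as an integral. This does \emph{not} give \eqref{wint} directly but an intermediate Volterra equation along characteristics (the paper's \eqref{w11}); the authors then observe that along each line $a-t=c$ the right-hand side is $C^1$, differentiate to get the scalar ODE $\tfrac{d}{dt}w(\cdot,t+c,t)=-\mu(\cdot,t+c,P(t))w(\cdot,t+c,t)$, and integrate to reach \eqref{wint}. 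Your informal description of the limit is on target, but you should expect this two-step structure (limit $\to$ Volterra equation $\to$ ODE along characteristics) rather than a one-shot formula.

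Your proposed ``cleaner'' alternative --- define $\tilde w$ by the right-hand side of \eqref{wint} and invoke uniqueness from Theorem \ref{solution semiflow} --- has a gap as stated. Theorem \ref{solution semiflow} gives uniqueness for the \emph{coupled nonlinear} problem: to use it you would need $(u,(0_Y,\tilde w))$ to satisfy the full integrated identity, including the $u$-equation with $f(u,\tilde w)$ and $\widetilde{\tilde w}$ in place of $f(u,w)$ and $P$. That is circular unless you already know $\tilde w=w$. What actually works is to freeze $u$ and $P$ from the abstract solution, view the $w$-component of \eqref{integrated solution} as a \emph{linear non-autonomous} age-structured problem in $w$ alone, and prove uniqueness for that linear problem (an easy Gronwall/contraction argument on $Z$). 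Both $w$ and $\tilde w$ solve this linear sub-problem, hence coincide. This is a legitimate alternative to the paper's explicit-limit computation, but the uniqueness you need is not the one supplied by Theorem \ref{solution semiflow}.
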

\begin{proof}
By Theorem \ref{solution semiflow}, \eqref{model} has a unique  integrated solution  $U(t)\bm x_0=(u(\cdot, t), (0_Y, w(\cdot, \cdot, t))) $, $t\in [0, t_{\max})$. Since $A_1$ is the generator of a strongly continuous semigroup on $Y$, the approximation formula
$$
\lim_{\lambda\to\infty}\lambda (\lambda-A_1)^{-1}v=v
$$
holds for any $v\in Y$. The $u$ component of \eqref{Upsi} is
$$
u(\cdot, t)=T_{A_1}(t)u_0+\int_0^t T_{A_1}(t-s)\lambda (\lambda I-A)^{-1}f(u(\cdot, s), w(\cdot, a, s))ds.
$$
Taking $\lambda\to\infty$,  we obtain
$$
u(\cdot, t)=T_{A_1}(t)u_0+\int_0^t T_{A_1}(t-s)f(u(\cdot, s), w(\cdot, a, s))ds.
$$
Since $w\in C([0, t_{\max}), Z)$, we have $P=\int_0^{a_{\max}} \int_\Omega w(x, a, \cdot)dxda \in C([0, t_{\max}), \mathbb{R})$. By the assumptions on $\beta$ in (A3), $B\in C([0, t_{\max}), Y)$, where $B$ is defined by \eqref{B}.  So by the regularity theory of parabolic equations, we know $u\in C([0, t_{\max}), Y)\cap C^1((0, t_{\max}), Y)\cap C((0, t_{\max}), D(A_1))$ and $u$ satisfies \eqref{uequ}.

Next, we turn to the equations of $w$. In the following computations, we write $T_{A_{2_0}}(t)(0_Y, w_0) $ as $T_{A_{2_0}}(t)w_0$ to save space.  By  \eqref{resolvent formula} and \eqref{Upsi}, $w$ satisfies
\begin{equation}\label{we1}
\begin{array}{lll}
    w(\cdot, a, t)&=&T_{A_{2_0}}(t)w_0+\lim_{\lambda\to\infty}\int_0^t T_{A_{2_0}}(t-s)\lambda (\lambda I-A_2)^{-1} G(U(s)\bm x_0) ds\\
    &=&
\begin{cases}
     \pi(a, a-t)w_0(\cdot, a-t) \ &\text{if}\ a> t \\
     0_Y \ &\text{if}\ a<t
\end{cases}
+L,
    \end{array}
\end{equation}
where $G(U(s)\bm x_0):=(g_1(u(\cdot, s), \tilde w(s)), g_2(\cdot, w(\cdot, \cdot, s)))$, $\pi(a_1, a_2):=e^{-\int_{a_2}^{a_1}\mu_0(l)dl}$ for any $0\le a_1\le a_2<\infty$ and
$$
L=\lim_{\lambda\to\infty}\lambda \int_0^t T_{A_{2_0}}(t-s) \left[\pi(a, 0)e^{-a\lambda}g_1(u(\cdot, s),  P(s))+ \int_0^a \pi(a, \tau)e^{-\lambda(a-\tau)}g_2(\tau, w(\cdot, \tau, s))d\tau\right] ds.
$$
Now we compute $L$. Since
\begin{equation*}
T_{A_{2_0}}(t-s)w'=
    \begin{cases}
  \pi(a, a-t+s) w'(a-t+s), \quad &\text{if}\ s>t-a, \\
  0_Y, \quad &\text{if}\ s<t-a,
    \end{cases}
\end{equation*}
for any $w'\in Z$ with $w'(0)=0_Y$,
 we have
\begin{eqnarray}\label{we3}
&&\lim_{\lambda\to\infty}\lambda \int_0^t T_{A_{2_0}}(t-s) \pi(a, 0)e^{-a\lambda}g_1(u(\cdot, s), P(s))ds\nonumber\\
&&=\lim_{\lambda\to\infty}\lambda\int_{\max\{t-a,0\}}^t
   \pi(a, a-t+s) \pi(a-t+s, 0)e^{-(a-t+s)\lambda}g_1(u(\cdot, s), P(s))ds\nonumber\\
&&=
\begin{cases}
     0_Y \ &\text{if}\ a> t, \\
    \pi(a, 0)g_1(u(\cdot, t-a), P(t-a))  \ &\text{if}\ a<t,
\end{cases}
\end{eqnarray}
where we have used the following observation:
$$
\lim_{\lambda\to \infty} \lambda\int_{c_1}^{c_2} e^{-\lambda (s-k)} h(s)ds=
  \begin{cases}
     0, \ &\text{if}\ k<c_1,\\
     h(c_1),\ &\text{if}\ k=c_1,
\end{cases}
$$
for any $h\in C([c_1, c_2], \mathbb{R})$. Similarly,
\begin{eqnarray}
&&\lim_{\lambda\to\infty}\lambda \int_0^t T_{A_{2_0}}(t-s)\int_0^a \pi(a, \tau)e^{-\lambda(a-\tau)}g_2(\tau, w(\cdot, \tau, s))d\tau ds \nonumber\\
&&=\lim_{\lambda\to\infty}\lambda \int_{\max\{t-a,0\}}^t \pi(a, a-t+s) \int_0^{a-t+s} \pi(a-t+s, \tau)e^{-\lambda(a-t+s-\tau)}g_2(\tau, w(\cdot, \tau, s))d\tau ds\nonumber\\
&&=\int_{\max\{t-a,0\}}^t \pi(a, a-t+s) g_2(a-t+s, w(\cdot, a-t+s, s))ds\\
&&
 = \begin{cases}
     \int_{0}^t\pi(a,a-s)g_2(a-s,w(\cdot, a-s, t-s))ds \ &\text{if}\ a> t, \nonumber\\
    \int_{0}^a\pi(a,a-s)g_2(a-s,w(\cdot, a-s, t-s))ds \ &\text{if}\ a<t.
\end{cases}
\end{eqnarray}

Combining \eqref{we1}-\eqref{we3}, we have
\begin{equation}\label{w11}
    w(\cdot, a, t)=
      \begin{cases}
    \pi(a, a-t)w_0(\cdot, a-t)+ \int_{0}^t\pi(a,a-s)g_2(a-s,w(\cdot, a-s, t-s))ds, \ &\text{if}\ a> t, \\
\pi(a, 0)g_1(u(\cdot, t-a), P(t-a))+    \int_{0}^a\pi(a,a-s)g_2(a-s,w(\cdot, a-s, t-s))ds, \ &\text{if}\ a<t.
\end{cases}
\end{equation}
Along a characteristic line $a-t=c$ with fixed $c>0$, by the first case in \eqref{w11}, $w$ satisfies
\begin{equation}\label{w22}
w(\cdot, t+c, t)=\pi(t+c, c) w_0(\cdot, c)+\int_0^t \pi(t+c, t+c-s)g_2(t+c-s, w(\cdot, t+c-s, t-s))ds.
\end{equation}
Noticing $w\in C([0, t_{\max}), Z)$ and  by the continuity assumptions of $\mu$ in (A3), equation \eqref{w22} implies that $w(\cdot, t+c, t)$ belongs to $C^1([0, t_{\max}), Y)$. Moreover,  differentiating \eqref{w22} with respect to $t$ gives
\begin{eqnarray}
\frac{d}{dt} w(\cdot, t+c, t)=-\mu(\cdot, t+c, P(t)) w(\cdot, t+c, t), \ \ \forall t\ge 0.
\end{eqnarray}
This implies that
$$
w(x, t+c, t)= w(x, c, 0) e^{-\int_0^t \mu(x, t+c-s, P(t-s))ds}, \ \ \forall t\ge 0, x\in\bar\Omega.
$$
Plugging in $c=a-t>0$, we obtain
$$
w(x,a, t)= w_0(x, a-t) e^{-\int_0^t \mu(x, a-s, P(t-s))ds}, \quad\forall  a>t\ge 0, x\in\bar\Omega.
$$
Similarly, using the  second case in \eqref{w11}, we have
$$
w(x,a, t)=\chi(x, P(t-a))e(x) u(x, t-a) e^{-\int_0^a
\mu(x, a-s, P(t-s))ds}, \quad\forall  t>a\ge 0, x\in\bar\Omega.
$$
\end{proof}

\subsection{Positivity of solutions}
In this subsection, we show that the semiflow $\{U(t)\}_{t\geq 0}$ in Theorem \ref{solution semiflow} induced by the solutions of \eqref{model} is nonnegative. Define a bounded linear operator $Q: \mathbb{X}_{0}\rightarrow \mathbb{X}$ by $Q\bm x=\bm x$ for any $\bm x=(u, (0_Y, w)) \in\mathbb{X}_0$.
Denote $Z_{+}:=L^{1}((0,a_{\max}),Y_{+})$,
$X_{+}:=Y_{+}\times Z_{+}$, $\mathbb{X}_{+}:=Y_{+}\times X_{+}$, and $\mathbb{X}_{0+}:=\mathbb{X}_{+}\cap \mathbb{X}_{0}=Y_{+}\times \{0_Y\}\times Z_{+}$, which are the positive cones of $Z$, $X$, $\mathbb{X}$ and $\mathbb{X}_0$, respectively.

\begin{lemma}\label{positivity:1}
Let $\mu_0\in L^\infty(\bar\Omega\times (0, a_{\max}))$ and $A$ be defined by \eqref{A}.
For any $\lambda>\max\{-\gamma, -\gamma-\underline\mu_0\}$ with $\gamma>0$, $\lambda\in \rho(A-\gamma Q)$ $($the resolvent set of $A-\gamma Q$$)$, and the following explicit formula holds for the resolvent of $A-\gamma Q$: for any $(\varpi,(y,\vartheta))\in \mathbb{X}$ and $(u, (0_Y, w)) \in \mathbb{X}_0$,  the following equality
\begin{equation}\label{resolvent positive}
\begin{gathered}(\lambda I-(A-\gamma Q))^{-1}\begin{pmatrix}
\varpi\\ \begin{pmatrix}
y\\ \vartheta
\end{pmatrix}
\end{pmatrix}
=\begin{pmatrix}
u\\ \begin{pmatrix}
0_Y\\ w
\end{pmatrix}
\end{pmatrix}
\end{gathered}
\end{equation}
holds if and only if
$$
u=((\lambda+\gamma)I-d\Delta)^{-1}\varpi,
$$
and
$$
w(a)=e^{-\int_{0}^{a}(\mu_0(l)+\lambda+\gamma) dl}y +\int_{0}^{a}e^{-\int_{s}^{a}(\mu_0(l)+\lambda+\gamma) dl}\vartheta(s)ds.
$$
In particular,  the linear operator $A-\gamma Q$ is resolvent positive in the sense that if $\lambda>0$ is large then
\begin{equation}\label{positivity_condition_1}
\begin{gathered}
(\lambda I-(A-\gamma Q))^{-1}\mathbb{X}_{+}\subset \mathbb{X}_{+}.
\end{gathered}
\end{equation}
\end{lemma}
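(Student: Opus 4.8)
The plan is to exploit the fact that $Q$ acts as the identity on $\mathbb{X}_0$: since $D(A)\subset\overline{D(A)}=\mathbb{X}_0$, for every $\bm x\in D(A)$ we simply have $(A-\gamma Q)\bm x=A\bm x-\gamma\bm x$. Hence $\lambda I-(A-\gamma Q)$ coincides with $(\lambda+\gamma)I-A$ as an operator on $D(A)$, which reduces the whole statement to describing the resolvent of $A$ at the point $\mu':=\lambda+\gamma$: one gets $\lambda\in\rho(A-\gamma Q)$ if and only if $\mu'\in\rho(A)$, and then $(\lambda I-(A-\gamma Q))^{-1}=(\mu' I-A)^{-1}$.

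Next I would use the block-diagonal structure of $A$ in \eqref{A} to decouple the resolvent equation $(\mu' I-A)(u,(0_Y,w))=(\varpi,(y,\vartheta))$ into the two independent problems $(\mu' I-A_1)u=\varpi$ and $(\mu' I-A_2)(0_Y,w)=(y,\vartheta)$, so that $\mu'\in\rho(A)$ exactly when $\mu'\in\rho(A_1)\cap\rho(A_2)$. For $A_1=d\Delta$ with Neumann boundary conditions the spectrum is contained in $(-\infty,0]$, so $\mu'\in\rho(A_1)$ and $u=(\mu' I-d\Delta)^{-1}\varpi$ as soon as $\mu'>0$, i.e. $\lambda>-\gamma$. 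For $A_2$ I would invoke Lemma \ref{H-Y} verbatim with $\lambda$ there replaced by $\mu'$: since $\mu'>-\underline\mu_0$, i.e. $\lambda>-\gamma-\underline\mu_0$, this yields $\mu'\in\rho(A_2)$ and, by \eqref{resolvent formula}, the explicit expression
$$
w(a)=e^{-\int_0^a(\mu_0(l)+\mu')\,dl}\,y+\int_0^a e^{-\int_s^a(\mu_0(l)+\mu')\,dl}\,\vartheta(s)\,ds .
$$
Substituting back $\mu'=\lambda+\gamma$ gives precisely the asserted formulas, and combining the two factors proves $\lambda\in\rho(A-\gamma Q)$ on the stated range $\lambda>\max\{-\gamma,-\gamma-\underline\mu_0\}$ together with \eqref{resolvent positive} (the ``if and only if'' being immediate once the inverse is identified, as the values land in $D(A_1)\times D(A_2)\subset\mathbb{X}_0$ automatically).

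For the positivity claim \eqref{positivity_condition_1} I would argue componentwise, taking $\lambda$ large enough that $\mu'=\lambda+\gamma>0$. The Neumann heat semigroup $\{T_{A_1}(t)\}_{t\ge0}$ is positive (parabolic maximum principle) and norm-contractive on $Y$, so its growth bound is $0$; hence for $\mu'>0$ one has the Laplace-transform representation $(\mu' I-d\Delta)^{-1}=\int_0^\infty e^{-\mu' t}T_{A_1}(t)\,dt$, an integral of positive operators against a positive weight, and therefore $(\mu' I-d\Delta)^{-1}Y_+\subset Y_+$. In the sedentary block, when $y\in Y_+$ and $\vartheta\in Z_+$ the displayed formula for $w$ is a sum of manifestly nonnegative terms, so $w\in Z_+$. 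It follows that $(\lambda I-(A-\gamma Q))^{-1}\mathbb{X}_+\subset\mathbb{X}_+$.

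I do not anticipate a genuine obstacle: once the reduction $A-\gamma Q=A-\gamma I$ on $D(A)$ is observed and Lemma \ref{H-Y} is applied with the shifted spectral parameter, the rest is bookkeeping. The only points needing a little care are keeping the shift $\lambda\mapsto\lambda+\gamma$ consistent in all the exponential kernels, and recording the positivity and growth bound of the Neumann heat semigroup used in the Laplace-transform formula for $(\mu' I-d\Delta)^{-1}$.
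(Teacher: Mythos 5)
Your proposal is correct and follows essentially the same route as the paper: the paper likewise writes the resolvent equation componentwise (which amounts to your observation that $A-\gamma Q=A-\gamma I$ on $D(A)\subset\mathbb{X}_0$), solves the $u$-block via $((\lambda+\gamma)I-d\Delta)^{-1}$ and the $w$-block exactly as in Lemma \ref{H-Y} with the shifted parameter $\lambda+\gamma$, and reads off positivity from the two explicit formulas. Your Laplace-transform justification of the positivity of the Neumann resolvent is just a more detailed version of the step the paper treats as known.
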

\begin{proof}
For any $\lambda\in\mathbb{R}$, $(\varpi, (y,\vartheta))\in \mathbb{X}$ and $(u, (0_Y, w)) \in \mathbb{X}_0$,
 we have
\begin{align*}
& \qquad ~ \quad(\lambda I-(A-\gamma Q))^{-1}\begin{pmatrix}
\varpi\\ \begin{pmatrix}
y\\ \vartheta
\end{pmatrix}
\end{pmatrix}
=\begin{pmatrix}
u\\ \begin{pmatrix}
0_Y\\ w
\end{pmatrix}
\end{pmatrix}
\\  & \Longleftrightarrow \quad
(\lambda I-(A-\gamma Q))\begin{pmatrix}
u\\ \begin{pmatrix}
0_Y\\ w
\end{pmatrix}
\end{pmatrix}
=\begin{pmatrix}
\varpi\\ \begin{pmatrix}
y\\ \vartheta
\end{pmatrix}
\end{pmatrix}
\\   &\Longleftrightarrow \quad
\lambda \begin{pmatrix}
u\\ \begin{pmatrix}
0_Y\\ w
\end{pmatrix}
\end{pmatrix}
-\begin{pmatrix}
d \Delta u\\ A_{2}\begin{pmatrix}
0_Y\\ w
\end{pmatrix}
\end{pmatrix}
+\gamma \begin{pmatrix}
u\\ \begin{pmatrix}
0_Y\\ w
\end{pmatrix}
\end{pmatrix}
=\begin{pmatrix}
\varpi\\ \begin{pmatrix}
y\\ \vartheta
\end{pmatrix}
\end{pmatrix}
  \\   &\Longleftrightarrow \quad
\left\{
   \begin{aligned}
&\ \lambda u-d\Delta u+\gamma u=\varpi &\\
&\ \lambda \begin{pmatrix}
0_Y\\ w
\end{pmatrix}
-A_{2} \begin{pmatrix}
0_Y\\ w
\end{pmatrix}
+\gamma \begin{pmatrix}
0_Y\\ w
\end{pmatrix}
=\begin{pmatrix}
y\\ \vartheta
\end{pmatrix} &\\
   \end{aligned}
  \right.
   \\   &\Longleftrightarrow \quad
   \left\{
   \begin{aligned}
   &\ \lambda u-d\Delta u+\gamma u=\varpi &\\
    &\   w(0)=y &\\
    &\ \lambda w+\partial_a w+\mu_0 w+\gamma w=\vartheta.
    \end{aligned}
   \right.
   \end{align*}
It follows that
\begin{equation}\label{ph1}
u=(\lambda+\gamma-d\Delta)^{-1}\varpi.
\end{equation}
Similar to the proof of Lemma \ref{H-Y},
\begin{equation}\label{var1}
w(a)=e^{-\int_{0}^{a}(\mu_0(l)+\lambda+\gamma) dl}y +\int_{0}^{a}e^{-\int_{s}^{a}(\mu_0(l)+\lambda+\gamma) dl}\vartheta(s)ds.
\end{equation}
The above computations hold if  $\lambda>\max\{-\gamma, -\gamma-\underline\mu_0\}$. Clearly, \eqref{positivity_condition_1} follows from \eqref{ph1} and \eqref{var1}.
\end{proof}

\begin{lemma}\label{positivity:2}
For each $\xi>0$, there exists  $\gamma=\gamma(\xi)>0$ such that
$$
F(\bm x)+\gamma Q\bm x\geq 0_{\mathbb{X}}
$$
for any $\bm x\in \mathbb{X}_{0+}$ satisfying  $\|\bm x\|_{\mathbb{X}}\leq \xi.$
\end{lemma}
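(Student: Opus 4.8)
The plan is to verify the inequality component-wise, exploiting the explicit block structure of $F$ and $Q$. Writing $\bm x=(u,(0_Y,w))\in\mathbb{X}_{0+}$ with $\tilde w=\int_0^{a_{\max}}\int_\Omega w(x,a)dxda\in[0,\xi]$, we have
\begin{equation*}
F(\bm x)+\gamma Q\bm x=\begin{pmatrix} f(u,w)+\gamma u\\ \begin{pmatrix} g_1(u,\tilde w)\\ g_2(\cdot,w)+\gamma w\end{pmatrix}\end{pmatrix},
\end{equation*}
so it suffices to choose $\gamma$ making each of the three scalar/function entries nonnegative on the ball of radius $\xi$.

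First I would handle the middle entry $g_1(u,\tilde w)=\chi(\cdot,\tilde w)eu$: since $\chi(x,P)\in(0,1]$ for $P\ge 0$ by (A2), $e>0$ by (A1), and $u\in Y_+$, this term is already nonnegative with no constraint on $\gamma$. Next, for the first entry $f(u,w)+\gamma u=\int_0^{a_{\max}}\beta(\cdot,a,\tilde w)w(a)da-(m+e)u-cu^2+\gamma u$; here the integral term is nonnegative because $\beta\ge 0$ (A3) and $w\ge 0$, while the remaining part $(\gamma-m-e-cu)u\ge 0$ provided $\gamma\ge \bar m+\bar e+\bar c\,\|u\|_\infty\ge \bar m+\bar e+\bar c\,\xi$ (using $\|u\|_\infty\le\|\bm x\|_{\mathbb{X}}\le\xi$). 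Finally, for the last entry $g_2(a,w)+\gamma w=(\mu_0(\cdot,a)-\mu(\cdot,a,\tilde w)+\gamma)w$; since $\mu\in L^\infty$ (A3) we have $\mu(\cdot,a,\tilde w)\le \|\mu\|_{L^\infty((0,a_{\max})\times\mathbb{R})}=:M$, so this entry is nonnegative (as $w\ge 0$, $\mu_0\ge 0$... or more simply, regardless of the sign of $\mu_0$) as soon as $\gamma\ge M$.

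Putting these together, taking $\gamma=\gamma(\xi):=\bar m+\bar e+\bar c\,\xi+M$ (which is strictly positive) makes all three entries of $F(\bm x)+\gamma Q\bm x$ lie in the respective positive cones $Y_+$, $Y_+$, $Z_+$, hence $F(\bm x)+\gamma Q\bm x\ge 0_{\mathbb{X}}$ for every $\bm x\in\mathbb{X}_{0+}$ with $\|\bm x\|_{\mathbb{X}}\le\xi$. I do not anticipate a genuine obstacle here: the only mild subtlety is bookkeeping the norm bounds — confirming that $\|u\|_\infty\le\|\bm x\|_{\mathbb{X}}$ and $\tilde w\le\|w\|_Z\le\|\bm x\|_{\mathbb{X}}\le\xi$ so that the Lipschitz/boundedness hypotheses (A2)–(A3) may be invoked with parameter $\zeta=\xi$ — and making sure the estimate on $\mu$ uses the global $L^\infty$ bound rather than a merely local one, since $\tilde w$ ranges over $[0,\xi]$ and the $L^\infty$ assumption in (A3) is uniform in the $P$-variable over all of $\mathbb{R}$.
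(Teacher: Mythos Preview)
Your proof is correct and follows essentially the same componentwise verification as the paper: show $g_1\ge 0$ directly, then choose $\gamma$ large (in terms of $\xi$ and the $L^\infty$ bounds of $m,e,c,\mu,\mu_0$) so that $f+\gamma u\ge 0$ and $g_2+\gamma w\ge 0$. One minor caveat: since $\mu_0\in L^\infty$ carries no sign assumption, your parenthetical that $\gamma\ge\|\mu\|_{L^\infty}$ suffices for the third component ``regardless of the sign of $\mu_0$'' is not quite right---you actually need $\gamma\ge\mathrm{ess\,sup}\,\mu-\mathrm{ess\,inf}\,\mu_0$---but this is an immediate adjustment to your final choice of $\gamma$ and does not affect the argument.
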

\begin{proof}
Let $\xi>0$ and $\bm x=(u, (0_Y, w))\in\mathbb{X}_{0+}$ satisfying $\|\bm x\|_{\mathbb{X}}\leq \xi$.
Choose $\gamma>0$ such that
$$
\gamma>\text{ess sup} \ \mu +\text{ess sup} \ \mu_0 +\|m\|_\infty+\|e\|_\infty+2\|c\|_\infty\xi.
$$
Then,
\begin{equation*}
   \begin{aligned}
 f(u,w)+\gamma u &=\int_{0} ^{a_{\max}}\beta(\cdot, a, \tilde w)w(a)da-(m+e)u-c u^{2}+\gamma u\geq 0_Y, \\
 g_1(u, \tilde w)&=\chi(\cdot, \tilde w)e u\ge 0_Y,\\
  g_2(\cdot,w)+\gamma w &=\mu_0 w-\mu(\cdot, \tilde w) w+\gamma w\geq 0_Z.
  \end{aligned}
  \end{equation*}
Thus, $F(\bm x)+\gamma Q\bm x\geq 0_{\mathbb{X}}$.  This completes the proof.
\end{proof}

Lemmas \ref{positivity:1} and \ref{positivity:2} show that positivity conditions in \cite[Assumption 4.1]{Magal2019} hold. Therefore, by \cite[Theorem 4.3]{Magal2019}, the  semiflow induced by the solutions of \eqref{model} is positive:

\begin{theorem}\label{Positivity}  Suppose that (A1)-(A3) hold. Then the semiflow $\{U(t)\}_{t\ge0}$ induced by  the solutions of \eqref{model} is nonnegative, that is, if $\bm x_0\in \mathbb{X}_{0+}$, then
$$
U(t)\bm x_0 \in \mathbb{X}_{0+},\quad \forall t\in (0,t_{\max}).
$$
\end{theorem}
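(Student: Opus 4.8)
The plan is to verify that the abstract positivity result of Magal et al.\ \cite[Theorem 4.3]{Magal2019} applies to our Cauchy problem \eqref{Cauchy problem}, since the statement of Theorem \ref{Positivity} is precisely the conclusion of that abstract theorem once its hypotheses are checked. The hypotheses are a packaged list: the linear operator $A$ must be a Hille-Yosida operator with $\overline{D(A)}=\mathbb{X}_0$; the positive cone $\mathbb{X}_{0+}$ must be closed with nonempty interior in $\mathbb{X}_0$ (or at least a normal generating cone, whichever the cited theorem requires); and the positivity conditions in \cite[Assumption 4.1]{Magal2019} — namely resolvent positivity of $A$ up to a shift, and the ``quasipositivity'' of the nonlinearity $F$ — must hold. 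All of the structural facts about $A$ were established earlier: $A$ is Hille-Yosida (from Lemma \ref{H-Y} together with $A_1$ generating a $C_0$-semigroup) and $\overline{D(A)}=\mathbb{X}_0$. The cone structure on $\mathbb{X}=Y\times Y\times Z$ with $Z=L^1((0,a_{\max}),Y)$ and $Y=C(\bar\Omega,\mathbb{R})$ is standard. So the substantive content is checking the two positivity conditions, and these are exactly Lemmas \ref{positivity:1} and \ref{positivity:2}.

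First I would state that, by Lemma \ref{positivity:1}, for every $\gamma>0$ the shifted operator $A-\gamma Q$ is resolvent positive: for $\lambda$ large, $(\lambda I-(A-\gamma Q))^{-1}$ maps $\mathbb{X}_+$ into $\mathbb{X}_+$ (this is \eqref{positivity_condition_1}, which follows from the explicit resolvent formulas \eqref{ph1}–\eqref{var1}, using that $(\lambda+\gamma-d\Delta)^{-1}$ is positivity-preserving under Neumann boundary conditions and that the age-transport resolvent in \eqref{var1} is manifestly nonnegative). Since $Q$ is itself positive, the same holds for $A$ shifted appropriately; in any case this is one half of \cite[Assumption 4.1]{Magal2019}. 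Next I would invoke Lemma \ref{positivity:2}: for each $\xi>0$ there is $\gamma=\gamma(\xi)>0$ with $F(\bm x)+\gamma Q\bm x\ge 0_{\mathbb{X}}$ for all $\bm x\in\mathbb{X}_{0+}$ with $\|\bm x\|_{\mathbb{X}}\le\xi$ — this is the quasipositivity (subtangential) condition on the nonlinearity, and it holds because the only potentially sign-indefinite terms in $F$, namely $-(m+e)u-cu^2$ in the $f$-component and $-\mu(\cdot,\tilde w)w$ in the $g_2$-component, are dominated on bounded sets by $\gamma u$ and $\gamma w$ respectively once $\gamma$ is chosen large relative to $\|m\|_\infty+\|e\|_\infty+2\|c\|_\infty\xi$ and $\operatorname{ess\,sup}\mu$.

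With both conditions in hand, I would conclude: since the integrated solution $U(t)\bm x_0$ given by Theorem \ref{solution semiflow} coincides with the solution produced by the abstract framework of \cite{Magal2019} (both are the unique mild solution of \eqref{Cauchy problem}), \cite[Theorem 4.3]{Magal2019} yields $U(t)\bm x_0\in\mathbb{X}_{0+}$ for all $t\in(0,t_{\max})$ whenever $\bm x_0\in\mathbb{X}_{0+}$. One technical point to address carefully is that Lemma \ref{positivity:2} only gives $\gamma$ depending on a bound $\xi$, whereas positivity is a statement along the whole (possibly unbounded) trajectory; the standard remedy, which I would spell out, is that on any compact subinterval $[0,t_1]\subset[0,t_{\max})$ the solution stays in a ball of some radius $\xi$, so one applies the abstract theorem with the corresponding $\gamma(\xi)$ on $[0,t_1]$ and then lets $t_1\uparrow t_{\max}$. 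The main obstacle is therefore not a deep estimate but bookkeeping: making sure the cone, the Hille-Yosida property, and the two positivity inequalities are matched exactly to the hypotheses of \cite[Assumption 4.1]{Magal2019} and \cite[Theorem 4.3]{Magal2019}, and that the shift parameter $\gamma$ is handled uniformly on compact time intervals.
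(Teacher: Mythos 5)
Your proposal is correct and follows the same route as the paper: the paper's proof consists precisely of noting that Lemmas \ref{positivity:1} and \ref{positivity:2} verify the positivity conditions of \cite[Assumption 4.1]{Magal2019} and then invoking \cite[Theorem 4.3]{Magal2019}. Your additional remarks on matching the cone/Hille–Yosida hypotheses and handling the $\xi$-dependent shift $\gamma$ on compact time subintervals are sensible bookkeeping that the paper leaves implicit, but they do not change the argument.
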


\subsection{Monotonicity and comparison principle}
The following assumption will be imposed to obtain the monotonicity of the solutions of \eqref{model}.
\begin{itemize}
\item[(A4)] The following assumptions hold:
\begin{enumerate}
    \item[{\rm (i)}] The map   $w\to \int_0^{a_{\max}} \beta(\cdot, a, \tilde w) w(\cdot, a)da$ is an increasing function from $Z_+$ to $Y_+$;
    \item[{\rm (ii)}] For any $\xi>0$, there is $\gamma>0$ such that the map $w \to \gamma w-\mu(\cdot, \cdot, \tilde w) w$ is an increasing function from the set $\{ w\in Z_+: \ \|w\|_Z\le\xi\}$ to $Z_+$;
    \item[{\rm (iii)}] The map $P\to \chi(\cdot, P)$ is increasing from $[0, \infty)$ to $Y_+$.
\end{enumerate}
\end{itemize}
It is easy to check that the following lemma holds with assumption (A4).

\begin{lemma}\label{monotonicity}
Suppose that $(A1)$-$(A4)$ hold.
Then for any $\xi>0$, there is $\gamma>0$ such that
$$
0_{\mathbb{X}}\leq \bm x_{1}\leq \bm x_{2}\Rightarrow 0_{\mathbb{X}}\leq F(\bm x_{1})+\gamma Q \bm x_{1}\leq F(\bm x_{2})+\gamma Q \bm x_{2},
$$
whenever $\bm x_{1}, \bm x_{2}\in \mathbb{X}_{0}$ satisfies $\|\bm x_{1}\|_{\mathbb{X}}\leq \xi$ and $\|\bm x_{2}\|_{\mathbb{X}}\leq \xi$.
\end{lemma}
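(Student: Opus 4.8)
The statement to prove is Lemma \ref{monotonicity}: under (A1)--(A4), for any $\xi>0$ there is $\gamma>0$ so that the "shifted nonlinearity" $\bm x \mapsto F(\bm x)+\gamma Q\bm x$ is monotone on the ball of radius $\xi$ in $\mathbb{X}_{0+}$. The plan is to simply unpack the definition of $F$ componentwise and verify each of the three scalar/vector components is monotone after adding a suitably large multiple of $Q$. Since $Q$ acts as the identity, for $\bm x=(u,(0_Y,w))$ the vector $F(\bm x)+\gamma Q\bm x$ has components $f(u,w)+\gamma u$, $g_1(u,\tilde w)$ (no shift needed in that slot since the first component of $Q$ contributes $\gamma u$ to the $f$-slot and the middle slot $0_Y$ is unaffected), and $g_2(\cdot,w)+\gamma w$. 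So I need: (a) $(u,w)\mapsto f(u,w)+\gamma u$ is increasing; (b) $(u,\tilde w)\mapsto g_1(u,\tilde w)=\chi(\cdot,\tilde w)eu$ is increasing; (c) $w\mapsto g_2(\cdot,w)+\gamma w = \gamma w -\mu(\cdot,\cdot,\tilde w)w + \mu_0 w$ is increasing (the added $\mu_0 w$ term is harmless since $\mu_0\ge\underline\mu_0$ is a fixed essentially bounded function and can be absorbed, or one enlarges $\gamma$ accordingly).

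First I would fix $\xi>0$ and restrict to $\bm x_1=(u_1,(0_Y,w_1))$, $\bm x_2=(u_2,(0_Y,w_2))$ in $\mathbb{X}_{0+}$ with $0_\mathbb{X}\le\bm x_1\le\bm x_2$ and $\|\bm x_i\|_\mathbb{X}\le\xi$; note $0\le u_1\le u_2\le\xi$ pointwise and $0\le w_1\le w_2$ in $Z_+$ with $\|w_i\|_Z\le\xi$, hence $0\le\tilde w_1\le\tilde w_2\le\xi$. For component (a): the recruitment term splits as $\int_0^{a_{\max}}\beta(\cdot,a,\tilde w_1)w_1(a)\,da \le \int_0^{a_{\max}}\beta(\cdot,a,\tilde w_2)w_2(a)\,da$ directly by (A4)(i) (which is precisely the monotonicity of $w\mapsto\int_0^{a_{\max}}\beta(\cdot,a,\tilde w)w(\cdot,a)\,da$ on $Z_+$), while the remaining scalar map $u\mapsto -（m+e)u - cu^2 + \gamma u = (\gamma-m-e)u - cu^2$ is increasing on $[0,\xi]$ in $Y$ provided $\gamma-\underline{(m+e)} - 2\bar c\,\xi \ge 0$, i.e. $\gamma > \|m\|_\infty+\|e\|_\infty+2\|c\|_\infty\xi$ (same computation as in Lemma \ref{positivity:2}, using that $t\mapsto (\gamma-m(x)-e(x))t-c(x)t^2$ has nonnegative derivative on $[0,\xi]$ uniformly in $x$). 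For component (b): since $\chi(x,P)\in(0,1]$, $e\ge 0$, and by (A4)(iii) $P\mapsto\chi(\cdot,P)$ is increasing, and $u\ge 0$, we get $\chi(\cdot,\tilde w_1)e u_1 \le \chi(\cdot,\tilde w_2)e u_1 \le \chi(\cdot,\tilde w_2)e u_2$, and trivially $g_1\ge 0$. For component (c): use (A4)(ii) to pick $\gamma_0>0$ with $w\mapsto\gamma_0 w-\mu(\cdot,\cdot,\tilde w)w$ increasing on $\{w\in Z_+:\|w\|_Z\le\xi\}$; then $g_2(\cdot,w)+\gamma w = [\gamma_0 w - \mu(\cdot,\cdot,\tilde w)w] + (\gamma-\gamma_0)w + \mu_0 w$, and since $w\mapsto(\gamma-\gamma_0)w$ and $w\mapsto\mu_0 w$ are increasing on $Z_+$ whenever $\gamma\ge\gamma_0$ (note $\mu_0\ge\underline\mu_0$ and one may arrange $\underline\mu_0\ge 0$, or simply enlarge $\gamma$ to dominate $\|\mu_0\|_\infty$ plus the required negative-part control), the sum is increasing.

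Finally I would take $\gamma$ to be the maximum of all the lower bounds produced above — explicitly something like $\gamma > \|m\|_\infty+\|e\|_\infty+2\|c\|_\infty\xi + \gamma_0(\xi) + \operatorname{ess\,sup}\mu_0$, echoing the choice in the proof of Lemma \ref{positivity:2} — and conclude that each component of $F(\bm x_2)+\gamma Q\bm x_2 - (F(\bm x_1)+\gamma Q\bm x_1)$ lies in the appropriate positive cone, and that each component of $F(\bm x_1)+\gamma Q\bm x_1$ is itself $\ge 0$ (which is Lemma \ref{positivity:2} again, possibly after enlarging $\gamma$), giving the full chain $0_\mathbb{X}\le F(\bm x_1)+\gamma Q\bm x_1\le F(\bm x_2)+\gamma Q\bm x_2$. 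This is essentially bookkeeping; the only mild subtlety — and the one place I would be careful — is that the quantities $\tilde w_i$ are scalars obtained by integrating $w_i$, so the monotonicity inputs (A4)(i)--(iii) must be applied with the correct argument ($\tilde w$ rather than $w$ itself inside $\beta,\mu,\chi$), and one must check that $0\le w_1\le w_2$ indeed forces $\tilde w_1\le\tilde w_2$, which is immediate by positivity of the integrand and monotonicity of the Lebesgue integral. There is no genuine analytic obstacle here; the lemma is a direct translation of (A4) into the language of $F$, and the proof is short. Hence the lemma follows. \hfill$\rule{0.5em}{0.5em}$
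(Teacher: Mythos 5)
Your proof is correct and is exactly the routine componentwise verification the paper has in mind (the paper omits the argument, noting it is easy to check from (A4), with the choice of $\gamma$ mirroring that in Lemma \ref{positivity:2}). One cosmetic slip: the intermediate condition should involve the supremum of $m+e$ (not $\underline{(m+e)}$), but since the bound $\gamma>\|m\|_\infty+\|e\|_\infty+2\|c\|_\infty\xi$ you actually adopt dominates it, nothing in the argument is affected.
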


The following result is a direct consequence of \cite[Theorem 4.5]{Magal2019} and Lemma \ref{monotonicity}, which claims that the semiflow induced by the solutions of \eqref{model} is monotone under assumptions (A1)-(A4).

\begin{theorem}\label{Monotone Semiflow}
Suppose that $(A1)$-$(A4)$  hold. Then the semiflow $\{U(t)\}_{t\geq 0}$ induced by the solutions of \eqref{model} is nonnegative and monotone increasing. That is, for any $\bm x_{1}, \bm x_{2}\in \mathbb{X}_{0+}$,
$$0\leq \bm x_{1}\leq \bm x_{2}\Longrightarrow 0\leq U(t)\bm x_{1}\leq U(t)\bm x_{2},\quad \forall t\in (0,t_{\max}).$$
\end{theorem}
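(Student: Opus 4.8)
The plan is to deduce Theorem \ref{Monotone Semiflow} directly from the abstract monotonicity result \cite[Theorem 4.5]{Magal2019} by verifying its hypotheses in our setting. Recall that the abstract framework there requires three structural ingredients: the part $A_0$ of $A$ in $\overline{D(A)}=\mathbb{X}_0$ generates a positive strongly continuous semigroup, the resolvent of $A-\gamma Q$ is positive for large $\lambda$ (which we already established in Lemma \ref{positivity:1}), and for each bounded set the perturbation satisfies the order-preserving condition $0\le \bm x_1\le\bm x_2\Rightarrow 0\le F(\bm x_1)+\gamma Q\bm x_1\le F(\bm x_2)+\gamma Q\bm x_2$, which is precisely the content of Lemma \ref{monotonicity}. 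So the bulk of the argument is already packaged in the preceding lemmas.

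First I would record that the semiflow $\{U(t)\}_{t\ge0}$ is globally well-defined on $\mathbb{X}_0$ and positive by Theorems \ref{solution semiflow} and \ref{Positivity}; in particular, for $\bm x_1,\bm x_2\in\mathbb{X}_{0+}$ with $\bm x_1\le\bm x_2$, both orbits $U(t)\bm x_1$ and $U(t)\bm x_2$ stay in $\mathbb{X}_{0+}$ for $t\in(0,t_{\max})$. Next I would fix an arbitrary $T\in(0,t_{\max})$ and let $\xi>0$ be an upper bound for $\|U(t)\bm x_i\|_{\mathbb{X}}$, $i=1,2$, over $t\in[0,T]$ (finite by continuity of the semiflow on the compact interval $[0,T]$ and the blow-up alternative). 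With this $\xi$, Lemma \ref{monotonicity} furnishes a constant $\gamma=\gamma(\xi)>0$ such that $F+\gamma Q$ is order preserving on the $\xi$-ball of $\mathbb{X}_{0+}$. Rewriting the Cauchy problem \eqref{Cauchy problem} as $\bm x'=(A-\gamma Q)\bm x+(F+\gamma Q)\bm x$ and using the variation-of-constants formula \eqref{Upsi} relative to the generator $A-\gamma Q$ — whose associated integrated semigroup and strongly continuous semigroup $T_{(A-\gamma Q)_0}$ are positive by Lemma \ref{positivity:1} — the solution on $[0,T]$ is represented as a convergent Picard-type iteration built from positivity-preserving operators applied to the order-preserving nonlinearity $F+\gamma Q$. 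Monotone dependence on the initial datum then propagates through the iteration, giving $U(t)\bm x_1\le U(t)\bm x_2$ for $t\in[0,T]$.

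Since $T\in(0,t_{\max})$ was arbitrary, the inequality holds on all of $(0,t_{\max})$, which is the claimed conclusion; combined with Theorem \ref{Positivity} this gives nonnegativity and monotone increase of the semiflow. The only genuine subtlety — and the step I would be most careful about — is the choice of $\gamma$: because Lemma \ref{monotonicity} only yields order preservation of $F+\gamma Q$ on bounded sets, one must first control the size of the orbits on $[0,T]$ before selecting $\gamma$, and then observe that the variation-of-constants comparison is carried out with this fixed $\gamma$ on that fixed time interval, after which a covering/continuation argument extends it to the maximal interval. This is exactly the mechanism in \cite[Theorem 4.5]{Magal2019}, so rather than reproving it I would simply cite that theorem, having checked that Lemmas \ref{positivity:1}, \ref{positivity:2} and \ref{monotonicity} verify its standing hypotheses (\cite[Assumption 4.1]{Magal2019} together with the monotonicity condition).
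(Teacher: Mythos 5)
Your proposal is correct and follows essentially the same route as the paper: the paper proves this theorem by citing \cite[Theorem 4.5]{Magal2019} together with Lemma \ref{monotonicity}, with Lemmas \ref{positivity:1} and \ref{positivity:2} having already verified the positivity hypotheses of \cite[Assumption 4.1]{Magal2019}. Your additional sketch of the bounded-set choice of $\gamma$ and the variation-of-constants comparison is a fair account of the mechanism inside the cited theorem, but it is not needed beyond the citation.
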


The next two results follow from \cite[Propositions 5.1-5.4]{Magal2019}, which give a method to construct upper and lower solutions of \eqref{model}.

\begin{proposition}\label{supper solution: integral form}\textbf{(Integral form)}
Suppose that $(A1)$-$(A4)$ hold. Let  $z\in C([0,t_{\max}),\mathbb{X}_{0+})$ and $\bm x_0\in\mathbb{X}_0$.
For any sufficiently large $\gamma>0$, if
$$
z(t)\geq (\le) T_{(A-\gamma Q)_{0}}(t)\bm x_0+(S_{(A-\gamma Q)}\diamond (F+\gamma Q)(z(\cdot)))(t),
$$
then
$$
z(t)\geq (\le) U(t)\bm x_0,\quad t\in [0,t_{\max}),
$$
where $\{U(t)\}_{t\ge0}$ is the  semiflow induced by the solutions of \eqref{model}.
\end{proposition}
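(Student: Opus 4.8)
The plan is to reduce the statement to the abstract comparison results of Magal et al.\ \cite{Magal2019} by checking that the Cauchy problem \eqref{Cauchy problem}, once rewritten with the shifted operator, fits their framework. Since $Q$ is a bounded linear operator, $A-\gamma Q$ is again a Hille--Yosida operator with $\overline{D(A-\gamma Q)}=\overline{D(A)}=\mathbb{X}_0$; hence it generates an integrated semigroup $\{S_{(A-\gamma Q)}(t)\}_{t\ge0}$ on $\mathbb{X}$, and its part $(A-\gamma Q)_0$ in $\mathbb{X}_0$ generates a strongly continuous semigroup $\{T_{(A-\gamma Q)_0}(t)\}_{t\ge0}$. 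Because $A\bm x+F(\bm x)=(A-\gamma Q)\bm x+(F+\gamma Q)(\bm x)$ and $F+\gamma Q$ is still Lipschitz on bounded sets, the integrated solution $U(\cdot)\bm x_0$ from Theorem \ref{solution semiflow} is also the unique integrated solution of the Cauchy problem associated with $(A-\gamma Q,\,F+\gamma Q)$; applying the variation-of-constants formula \eqref{Upsi} to this rewriting gives
$$
U(t)\bm x_0=T_{(A-\gamma Q)_0}(t)\bm x_0+\big(S_{(A-\gamma Q)}\diamond(F+\gamma Q)(U(\cdot)\bm x_0)\big)(t),\qquad t\in[0,t_{\max}).
$$

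Next I would verify the structural hypotheses of \cite{Magal2019}. Lemma \ref{positivity:1} shows $A-\gamma Q$ is resolvent positive, which yields $T_{(A-\gamma Q)_0}(t)\mathbb{X}_{0+}\subset\mathbb{X}_{0+}$ via the exponential formula; Lemma \ref{positivity:2} gives $(F+\gamma Q)(\bm x)\ge0_{\mathbb{X}}$ for $\bm x\in\mathbb{X}_{0+}$ with $\|\bm x\|_{\mathbb{X}}\le\xi$ once $\gamma$ is chosen large enough in terms of $\xi$; and, under (A1)--(A4), Lemma \ref{monotonicity} shows $\bm x\mapsto(F+\gamma Q)(\bm x)$ is monotone nondecreasing on that ball. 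These are exactly the positivity and monotonicity assumptions under which \cite[Propositions 5.1--5.4]{Magal2019} establishes that the integral operator $z(\cdot)\mapsto T_{(A-\gamma Q)_0}(\cdot)\bm x_0+S_{(A-\gamma Q)}\diamond(F+\gamma Q)(z(\cdot))$ is monotone on the appropriate order interval of $C([0,T],\mathbb{X}_{0+})$ and that an upper (resp.\ lower) solution in the stated integral sense dominates (resp.\ is dominated by) its fixed point $U(\cdot)\bm x_0$.

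To handle the possibly finite maximal interval, I would localize in time: fix $T\in(0,t_{\max})$, set $\xi:=\sup_{t\in[0,T]}\big(\|z(t)\|_{\mathbb{X}}+\|U(t)\bm x_0\|_{\mathbb{X}}\big)<\infty$ by continuity, and take $\gamma$ at least as large as the threshold from Lemmas \ref{positivity:2} and \ref{monotonicity} for this $\xi$ (this is the meaning of ``sufficiently large $\gamma$''). Then the hypothesis of the proposition makes $z$ an upper (resp.\ lower) solution on $[0,T]$ in the sense of \cite{Magal2019}, so their comparison result gives $z(t)\ge U(t)\bm x_0$ (resp.\ $\le$) for $t\in[0,T]$; letting $T\uparrow t_{\max}$ yields the conclusion on all of $[0,t_{\max})$.

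I expect the main obstacle to be bookkeeping rather than a hard estimate: one must confirm that \cite{Magal2019}'s abstract setting is matched with the concrete reference rate $\mu_0$ and shift $\gamma$ fixed here, that the order on $C([0,T],\mathbb{X})$ used there coincides with the pointwise order in $\mathbb{X}$, and that the monotone iteration underlying their Propositions 5.1--5.4 converges precisely to $U(\cdot)\bm x_0$ (here the local Lipschitz bound \eqref{Lipschitz continuous} and the uniqueness in Theorem \ref{solution semiflow} pin down the fixed point). Granting these, the statement follows directly from the cited propositions.
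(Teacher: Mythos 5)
Your proposal is correct and follows essentially the same route as the paper: the paper proves this proposition simply by citing \cite[Propositions 5.1--5.4]{Magal2019} after having verified the positivity and monotonicity hypotheses in Lemmas \ref{positivity:1}, \ref{positivity:2}, and \ref{monotonicity}. Your write-up merely makes explicit the rewriting with the shifted operator $A-\gamma Q$, the choice of $\gamma$ relative to a bound $\xi$, and the localization in time, all of which is consistent with (and implicit in) the paper's citation-based argument.
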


\begin{proposition}\label{differential form upper solution}\textbf{(Differential form)}
Suppose that $(A1)$-$(A4)$ hold. Let $\bm x_0 \in \mathbb{X}_{0+}$ and   $\bm y\in C([0,t_{\max}),D(A))\cap C^{1}([0,t_{\max}),\mathbb{X}_{0+})$. If
\begin{equation*}
\left\{
  \begin{array}{ll}
      \frac{d\bm y(t)}{dt}\geq (\le)  A\bm y(t)+F(\bm y(t)),\quad \forall t\in [0,t_{\max}),   \\
     \bm y(0)=\bm x_0,   
  \end{array}
\right.
\end{equation*}
then we have
$$
\bm y(t)\geq (\le)  U(t)\bm x_0,\quad \forall t\in [0,t_{\max}),
$$
where $\{U(t)\}_{t\ge0}$ is the  semiflow induced by the solutions of \eqref{model}.
\end{proposition}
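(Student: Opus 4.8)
The plan is to reduce the differential inequality to the integral inequality of Proposition \ref{supper solution: integral form} and then invoke that result. First I would fix a sufficiently large $\gamma>0$, chosen as in Lemma \ref{monotonicity} relative to a uniform bound $\xi$ on $\|\bm y(t)\|_{\mathbb{X}}$ over the (a priori finite) subintervals of $[0,t_{\max})$ on which we work; this is legitimate because $\bm y$ is continuous, hence bounded on compact subintervals, and the conclusion on $[0,t_{\max})$ follows by exhausting the interval. Rewriting the hypothesis in the ``shifted'' form, $\bm y$ satisfies
\begin{equation*}
\frac{d\bm y(t)}{dt}-(A-\gamma Q)\bm y(t)\;\geq\;(F+\gamma Q)(\bm y(t)),\qquad t\in[0,t_{\max}),
\end{equation*}
with $\bm y(0)=\bm x_0$, where $A-\gamma Q$ is still a Hille--Yosida operator (same domain as $A$) and, by Lemma \ref{positivity:1}, is resolvent positive, so that the integrated semigroup $\{S_{(A-\gamma Q)}(t)\}$ and the strongly continuous semigroup $\{T_{(A-\gamma Q)_0}(t)\}$ it generates on $\mathbb{X}_0$ are both positive.

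Next I would apply the variation-of-constants representation for the non-densely defined Cauchy problem with the bounded perturbation $g(t):=\frac{d\bm y(t)}{dt}-(A-\gamma Q)\bm y(t)\in C([0,t_{\max}),\mathbb{X})$. Since $\bm y\in C^1([0,t_{\max}),\mathbb{X}_0)\cap C([0,t_{\max}),D(A))$, the function $\bm y$ is a classical (hence integrated) solution of $\bm y'=(A-\gamma Q)\bm y + g$, so by the integrated-semigroup formula (as in \eqref{Upsi}, applied with generator $A-\gamma Q$) we get
\begin{equation*}
\bm y(t)=T_{(A-\gamma Q)_0}(t)\bm x_0+\bigl(S_{(A-\gamma Q)}\diamond g(\cdot)\bigr)(t),\qquad t\in[0,t_{\max}).
\end{equation*}
Because $g(t)\geq (F+\gamma Q)(\bm y(t))$ for the $\geq$ case (resp.\ $\leq$ for the other case), and because $S_{(A-\gamma Q)}\diamond(\cdot)$ is a positivity-preserving and order-preserving operation on $C([0,t_{\max}),\mathbb{X})$ — it is a limit of integrals $\int_0^t T_{(A-\gamma Q)_0}(t-s)\lambda(\lambda I-(A-\gamma Q))^{-1}(\cdot)\,ds$ of nonnegative operators, by Lemma \ref{positivity:1} — monotonicity in the perturbation yields
\begin{equation*}
\bm y(t)\;\geq\;(\le)\; T_{(A-\gamma Q)_0}(t)\bm x_0+\bigl(S_{(A-\gamma Q)}\diamond (F+\gamma Q)(\bm y(\cdot))\bigr)(t),\qquad t\in[0,t_{\max}).
\end{equation*}
This is precisely the hypothesis of Proposition \ref{supper solution: integral form} with $z=\bm y$ (note $\bm y$ takes values in $\mathbb{X}_{0+}$ by assumption), so that proposition gives $\bm y(t)\geq (\le) U(t)\bm x_0$ for all $t\in[0,t_{\max})$, as claimed.

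The main obstacle is the bookkeeping around the non-dense domain: one must verify that a $C^1$ solution of the abstract ODE with values in $D(A)$ genuinely coincides with its integrated-semigroup variation-of-constants representation (this is where $\bm y\in C([0,t_{\max}),D(A))$, not merely $D(A_0)$, is needed, together with the identity $\frac{d}{dt}(S_{(A-\gamma Q)}*h)=S_{(A-\gamma Q)}\diamond h$ and the fact that $\int_0^t\bm y(s)\,ds\in D(A)$), and that the order-preserving property of $S_{(A-\gamma Q)}\diamond(\cdot)$ indeed follows from resolvent positivity via the approximation formula. Both points are standard in the integrated-semigroup framework of \cite{Magal2019, Ruan2018}; in fact, once the differential inequality is recast in integral form the statement is essentially \cite[Proposition 5.4]{Magal2019}, and I would cite it for the remaining technical verifications rather than reproduce them. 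A secondary point worth a line is the uniform choice of $\gamma$: since Lemma \ref{monotonicity} only provides $\gamma$ on bounded sets, one argues on an arbitrary compact $[0,T]\subset[0,t_{\max})$, obtains the comparison there, and lets $T\uparrow t_{\max}$.
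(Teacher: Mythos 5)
Your argument is correct: rewriting the differential inequality with the shift $\gamma Q$, representing the classical solution of $\bm y'=(A-\gamma Q)\bm y+g$ by the integrated-semigroup variation-of-constants formula, using the positivity from Lemma \ref{positivity:1} to pass the inequality $g\ge(\le)\,(F+\gamma Q)(\bm y)$ through $S_{(A-\gamma Q)}\diamond(\cdot)$, and then invoking Proposition \ref{supper solution: integral form} on compact subintervals is exactly the standard route. The paper itself gives no proof here but simply cites \cite[Propositions 5.1--5.4]{Magal2019}, and your reduction to the integral form (with the final appeal to that reference for the technical identities) is essentially the same approach as the cited source, so there is nothing to correct.
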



By \cite[Theorem 5.5]{Magal2019}, the following result holds.
\begin{theorem}\label{Increasing and Decreasing Solution}
Suppose that $(A1)$-$(A4)$ hold. Let $\{U(t)\}_{t\ge 0}$ be the semiflow induced by the solutions of \eqref{model} and  $\bm x_0\in D(A)\cap \mathbb{X}_{0+}$. Then the following statements hold:
\begin{enumerate}
    \item[{\rm (i)}]  If $A\bm x_0+F(\bm x_0)\leq 0_{\mathbb{X}}$, then the map $t\rightarrow U(t)\bm x_0$ is decreasing on $[0, t_{\max})$;
    \item[{\rm (ii)}] If $A\bm x_0+F(x_0)\geq 0_{\mathbb{X}}$, then the map $t\rightarrow U(t)\bm x_0$ is increasing on $[0, t_{\max})$.
\end{enumerate}
\end{theorem}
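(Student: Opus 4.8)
The plan is to derive both statements from the comparison principle already established for \eqref{model}, namely Proposition~\ref{differential form upper solution} (differential form of super/subsolutions), combined with the monotonicity of the induced semiflow (Theorem~\ref{Monotone Semiflow}) and the semiflow identity $U(t+s)=U(t)U(s)$. The guiding idea is that a stationary initial state at which the nonlinear vector field points ``downward'' (resp.\ ``upward'') is itself a supersolution (resp.\ subsolution), and this one-step comparison then propagates along the flow.

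For case (i), I would take the \emph{constant} function $\bm y(t)\equiv\bm x_0$ as a candidate supersolution. Since $\bm x_0\in D(A)\cap\mathbb{X}_{0+}$, we have $\bm y\in C([0,t_{\max}),D(A))\cap C^{1}([0,t_{\max}),\mathbb{X}_{0+})$ with $\bm y(0)=\bm x_0$, and the inequality
$$
\frac{d\bm y(t)}{dt}=0_{\mathbb{X}}\ge A\bm x_0+F(\bm x_0)=A\bm y(t)+F(\bm y(t)),\qquad t\in[0,t_{\max}),
$$
holds precisely by the hypothesis $A\bm x_0+F(\bm x_0)\le 0_{\mathbb{X}}$. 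Proposition~\ref{differential form upper solution} (the ``$\ge$'' alternative) then gives $U(t)\bm x_0\le\bm x_0$ for all $t\in[0,t_{\max})$. To upgrade this to monotonicity in $t$, fix $0\le s\le t<t_{\max}$ and set $h:=t-s\ge 0$. By the previous step $U(h)\bm x_0\le\bm x_0$, and by Theorem~\ref{Positivity} both $U(h)\bm x_0$ and $\bm x_0$ lie in $\mathbb{X}_{0+}$; hence Theorem~\ref{Monotone Semiflow} yields $U(s)\big(U(h)\bm x_0\big)\le U(s)\bm x_0$. Using the semiflow property $U(s)U(h)\bm x_0=U(s+h)\bm x_0=U(t)\bm x_0$, we conclude $U(t)\bm x_0\le U(s)\bm x_0$, i.e.\ $t\mapsto U(t)\bm x_0$ is decreasing. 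Case (ii) is symmetric: now $\bm y\equiv\bm x_0$ is a subsolution because $A\bm x_0+F(\bm x_0)\ge 0_{\mathbb{X}}$, the ``$\le$'' alternative of Proposition~\ref{differential form upper solution} gives $\bm x_0\le U(h)\bm x_0$, and the same monotonicity–plus–semiflow argument (with the roles of the two arguments swapped) produces $U(s)\bm x_0\le U(t)\bm x_0$ for $s\le t$.

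I do not anticipate a genuine obstacle, since the hard analytic work — setting up the non-densely defined Cauchy problem, proving positivity, and transporting the abstract comparison and monotonicity results of Magal et al.\ to \eqref{model} — is already done in Theorem~\ref{Positivity}, Lemma~\ref{monotonicity}, Theorem~\ref{Monotone Semiflow}, and Proposition~\ref{differential form upper solution}. The only points needing a little care are: (a) confirming that the constant function meets the exact regularity demanded by Proposition~\ref{differential form upper solution}, which is immediate from $\bm x_0\in D(A)\cap\mathbb{X}_{0+}$; and (b) keeping every composition $U(s)U(h)\bm x_0$ inside the maximal existence interval, which holds since $s+h=t<t_{\max}$. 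As an alternative to invoking the semiflow identity, one could instead compare, for each fixed $h>0$, the two solutions $t\mapsto U(t)\bm x_0$ and $t\mapsto U(t+h)\bm x_0$ directly via the integral form of the comparison principle (Proposition~\ref{supper solution: integral form}) with initial data ordered by $U(h)\bm x_0\le\bm x_0$ (resp.\ $\ge$), reaching the same conclusion.
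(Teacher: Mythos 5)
Your proposal is correct. Note, though, that the paper does not write out a proof at all: it obtains the theorem by directly invoking the abstract result \cite[Theorem 5.5]{Magal2019}, whereas you re-derive it inside the present setting from the ingredients the paper has already transported from that same source, namely the differential-form comparison principle (Proposition \ref{differential form upper solution}), positivity (Theorem \ref{Positivity}), the monotone semiflow property (Theorem \ref{Monotone Semiflow}), and the semiflow identity. Mathematically this is the standard argument behind the cited abstract theorem: the constant function $\bm y(t)\equiv\bm x_0$ is an admissible super- (resp.\ sub-) solution precisely because $\bm x_0\in D(A)\cap\mathbb{X}_{0+}$ and $A\bm x_0+F(\bm x_0)\le 0_{\mathbb{X}}$ (resp.\ $\ge 0_{\mathbb{X}}$), giving $U(h)\bm x_0\le\bm x_0$ (resp.\ $\ge$), and applying $U(s)$ with $s+h=t<t_{\max}$ propagates the ordering to $U(t)\bm x_0\le U(s)\bm x_0$ (resp.\ $\ge$). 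Your two care points (the regularity of the constant curve and keeping $U(s)U(h)\bm x_0$ inside the maximal existence interval) are exactly the right ones and are handled correctly, so what your version buys is a self-contained proof at the cost of repeating an argument the paper delegates to the reference; the paper's citation buys brevity but leaves the mechanism implicit.
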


\begin{remark}
    Theorem \ref{Increasing and Decreasing Solution} gives conditions under which the solution of \eqref{model} is increasing or decreasing. Unfortunately, it does not tell us whether the solutions converge to an equilibrium as $t\to\infty$ if $t_{\max}=\infty$. We will show that if the $\omega$-limit set of $\bm x_0$ is compact, then $U(t)\bm x_0$ converges to an equilibrium as $t\to\infty$ in Section \ref{section_as}.
\end{remark}

\section{Boundedness and global existence}
We make the following assumption in order to establish the boundedness and global existence of the solutions of model \eqref{model}.

\begin{itemize}
\label{Hypothesis one}
\item[(A5)] $\bar{\chi}=\sup_{(x,P)\in {\Omega}\times \mathbb{R}_{+}}\chi(x,P)<\infty$ and
$\underline{\mu}:=\text{essinf} \ \mu>0$.
\end{itemize}
\begin{theorem}\label{global existence}
Suppose that $(A1)$-$(A3)$ and $(A5)$ hold.  Let $\{U(t)\}_{t\ge0}$ be the semiflow  induced by  the solutions of \eqref{model}. Then for any $\xi>0$ and $\bm x_0\in \mathbb{X}_{0+}$ with $\|\bm x_0\|_{\mathbb{X}}\le \xi$, there exists $M=M(\xi)>0$ such that
\begin{equation}\label{Mb}
\|U(t)\bm x_0\|_{\mathbb{X}}\le M, \ \ \forall t\ge 0.
\end{equation}
In particular, the nonnegative solution $U(t)\bm x_0$ of \eqref{model} exists for all $t\ge0$. Moreover, there is $N>0$ independent of $\bm x_0\in \mathbb{X}_0$ such that
\begin{equation}\label{Nb}
\limsup_{t\to\infty} \|U(t)\bm x_0\|_{\mathbb{X}}\le N.
\end{equation}
\end{theorem}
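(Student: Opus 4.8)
The plan is to exploit the representation formulas of Proposition~\ref{prop_int}: $u$ solves the scalar Neumann reaction--diffusion problem \eqref{uequ} in the classical sense, while \eqref{wint} exhibits $w$ as an exponentially damped transport of either the initial datum $w_0$ or the settlement flux $\chi e u$. The dissipative mechanism is that the crowding term $-c(x)u^2$ in \eqref{uequ} caps $\|u(\cdot,t)\|_\infty$ no matter how large recruitment is; since every sedentary individual is produced by settlement of a disperser and decays at a rate $\ge\underline\mu>0$ by (A5), capping $u$ automatically caps $\|w(\cdot,\cdot,t)\|_Z$. This logistic feedback is essential: the linearized recruitment--settlement loop can be exponentially unstable (when $\mathcal R_0>1$), so no purely linear estimate would be dissipative.

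First I would use \eqref{wint}, (A5) (so $\mu\ge\underline\mu$, $\chi\le\bar\chi$) and $e\le\bar e$ to obtain, for $t\in[0,t_{\max})$, the pointwise bounds $\|w(\cdot,a,t)\|_\infty\le e^{-\underline\mu t}\|w_0(\cdot,a-t)\|_\infty$ for $a>t$ and $\|w(\cdot,a,t)\|_\infty\le\bar\chi\,\bar e\,e^{-\underline\mu a}\|u(\cdot,t-a)\|_\infty$ for $a<t$; integrating in $a$ over $(0,a_{\max})$ and using $\int_0^\infty e^{-\underline\mu a}\,da=1/\underline\mu$ gives
\[
\|w(\cdot,\cdot,t)\|_Z\le\|w_0\|_Z+\frac{\bar\chi\,\bar e}{\underline\mu}\sup_{s\in[0,t]}\|u(\cdot,s)\|_\infty,
\]
the $\|w_0\|_Z$ term being absent once $t\ge a_{\max}$. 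Writing $\bar\beta:=\mathrm{ess\,sup}\,\beta<\infty$ (by (A3)), the forcing term in \eqref{uequ} obeys $\|B(\cdot,t)\|_\infty\le\bar\beta\|w(\cdot,\cdot,t)\|_Z$, so the spatially constant function $z$ solving $z'=\bar\beta\|w(\cdot,\cdot,t)\|_Z-\underline c\,z^2$, $z(0)=\|u_0\|_\infty$, is a supersolution of the Neumann problem \eqref{uequ} (it exists and stays nonnegative on $[0,t_{\max})$ since $-\underline c z^2\le z'\le\bar\beta\|w(\cdot,\cdot,t)\|_Z$ and $t\mapsto\|w(\cdot,\cdot,t)\|_Z$ is continuous there); since $u$ is classical by Proposition~\ref{prop_int}, the parabolic comparison principle yields $\|u(\cdot,t)\|_\infty\le z(t)$.

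Substituting the bound on $\|w(\cdot,\cdot,t)\|_Z$ and $\|u(\cdot,s)\|_\infty\le z(s)$ into the equation for $z$ closes the estimate into a single delayed logistic inequality,
\[
z'(t)\le \bar\beta\|w_0\|_Z+\frac{\bar\beta\,\bar\chi\,\bar e}{\underline\mu}\sup_{s\in[0,t]}z(s)-\underline c\,z(t)^2,\qquad t\in[0,t_{\max}).
\]
A standard ``first time a new maximum is attained'' argument (at such a $t_1$, $z'(t_1)\ge0$ and $\sup_{[0,t_1]}z=z(t_1)$, forcing $\underline c\,z(t_1)^2\le\bar\beta\|w_0\|_Z+\tfrac{\bar\beta\bar\chi\bar e}{\underline\mu}z(t_1)$) then gives $z(t)\le\max\{\|u_0\|_\infty,\rho\}$ for all $t$, with $\rho=\rho(\|w_0\|_Z)$ the positive root of $\underline c\rho^2-\tfrac{\bar\beta\bar\chi\bar e}{\underline\mu}\rho-\bar\beta\|w_0\|_Z=0$. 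Since $\|u_0\|_\infty\le\xi$ and $\|w_0\|_Z\le\xi$, this bounds $\|u(\cdot,t)\|_\infty$, and then the displayed $w$-estimate bounds $\|w(\cdot,\cdot,t)\|_Z$; summing proves \eqref{Mb} with $M=M(\xi)$, and boundedness of $\|U(t)\bm x_0\|_{\mathbb X}$ together with the blow-up alternative of Theorem~\ref{solution semiflow} forces $t_{\max}=\infty$. For \eqref{Nb} I would rerun the last two steps on $t\ge a_{\max}$, where the $\|w_0\|_Z$ term drops and $z'(t)\le\tfrac{\bar\beta\bar\chi\bar e}{\underline\mu}\sup_{[t-a_{\max},t]}z-\underline c\,z^2$; a comparison lemma for bounded-delay logistic inequalities (if $z\ge0$ is bounded and $z'\le a\sup_{[t-r,t]}z-c z^2$ then $\limsup_{t\to\infty}z\le a/c$, proved by feeding $L:=\limsup_t z$ back into the inequality and letting $\varepsilon\to0$) gives $\limsup_{t\to\infty}\|u(\cdot,t)\|_\infty\le\bar\beta\bar\chi\bar e/(\underline\mu\,\underline c)=:N_1$, independent of $\bm x_0$, whence $\limsup_{t\to\infty}\|w(\cdot,\cdot,t)\|_Z\le\bar\chi\bar e N_1/\underline\mu$ and \eqref{Nb} holds with $N=N_1(1+\bar\chi\bar e/\underline\mu)$.

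The main obstacle is the two-way coupling between $u$ and $w$, which precludes estimating either field in isolation; the device that breaks the circularity is Proposition~\ref{prop_int}, which lets $\|w(\cdot,\cdot,t)\|_Z$ be controlled purely by past values of $\|u(\cdot,\cdot)\|_\infty$ and thereby collapses the system to one (delayed) logistic differential inequality. A secondary point of care is that the dissipative estimate genuinely needs the crowding nonlinearity $-c(x)u^2$ rather than the linear mortality, and that the supersolution is built for the scalar parabolic problem \eqref{uequ} (legitimate precisely because $u$ is classical) rather than for the abstract non-densely-defined system, for which the comparison statements of Section~2.4 would additionally require the monotonicity hypothesis (A4) that is not assumed here.
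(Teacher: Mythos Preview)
Your argument for the uniform bound \eqref{Mb} is correct and close in spirit to the paper's: both use the representation \eqref{wint} to bound $\|w(\cdot,\cdot,t)\|_Z$ by a decaying multiple of $\|w_0\|_Z$ plus $\tfrac{\bar\chi\bar e}{\underline\mu}\sup_{[0,t]}\|u\|_\infty$, and then use the parabolic comparison principle on \eqref{uequ} together with the quadratic damping $-cu^2$ to close the loop. Where you build a scalar ODE supersolution $z$ and apply a ``first new maximum'' argument, the paper simply bounds $\sup_t\|u\|_\infty$ by $\max\bigl\{\|u_0\|_\infty,\sqrt{\bar\beta\sup_t\|\mathbb P\|_\infty/\underline c}\bigr\}$ (with $\mathbb P(x,t):=\int_0^{a_{\max}}w(x,a,t)\,da$) and solves the resulting algebraic inequality; the two devices are interchangeable.

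There is, however, a genuine gap in your treatment of the ultimate bound \eqref{Nb}. Your argument hinges on $a_{\max}<\infty$: you drop the $\|w_0\|_Z$ contribution ``once $t\ge a_{\max}$'' and then invoke a comparison lemma for \emph{bounded}-delay logistic inequalities. When $a_{\max}=\infty$ (the case the paper treats as primary), neither step is available: the supremum in your inequality $z'\le\frac{\bar\beta\bar\chi\bar e}{\underline\mu}\sup_{[0,t]}z-\underline c\,z^2$ runs over the entire history, and $\sup_{[0,t]}z\to\sup_{[0,\infty)}z$ rather than $\limsup_{t\to\infty}z$, so ``feeding $L=\limsup z$ back'' yields only $L\le\frac{\bar\beta\bar\chi\bar e}{\underline\mu\,\underline c}\sup_{[0,\infty)}z$, which still depends on initial transients and gives no absorbing bound. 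The paper circumvents this by differentiating $\mathbb P(x,t)$ in $t$ (from \eqref{wint}) to obtain the \emph{memoryless} differential inequality $\partial_t\mathbb P\le\bar\chi\bar e\,u-\underline\mu\,\mathbb P$; this immediately gives $\limsup_t\|\mathbb P(\cdot,t)\|_\infty\le\frac{\bar\chi\bar e}{\underline\mu}\limsup_t\|u(\cdot,t)\|_\infty$, which, fed back into \eqref{uequ}, yields the initial-data-independent bound $u_\infty\le\bar\beta\bar\chi\bar e/(\underline\mu\,\underline c)$. An equivalent repair of your route would be to retain the exponential kernel $e^{-\underline\mu a}$ in the convolution bound for $\|w\|_Z$ instead of passing to a crude supremum and then argue via Fatou's lemma (as the paper does for $\limsup_t\|w\|_Z$), but that is not what you wrote.
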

\begin{proof}
Let $\xi>0$ and $\bm x_0=(u_0, (0_Y, w_0)) \in \mathbb{X}_{0+}$ with $\|\bm x_0\|_{\mathbb{X}}\le \xi$.
By Theorems \ref{solution semiflow} and \ref{Positivity}, the solution $U(t)\bm x_0= (u(\cdot, t), (0_Y, w(\cdot, \cdot, t))) $ of \eqref{model} exists for $t\in [0, t_{\max})$ such that $U(t)\bm x_0\in \mathbb{X}_{0+}$ for all $t\ge 0$ and either $t_{\max}=\infty$ or $t_{\max}<\infty$ and $\lim_{t\to t_{\max}}\|U(t)\bm x_0\|_{\mathbb{X}}=\infty$.

We only consider the case $a_{\max}=\infty$, since  $a_{\max}<\infty$ can be proved similarly. By \eqref{wint}, we have
\begin{eqnarray}
     \mathbb{P}(x, t):=\int_0^{\infty} w(x, a, t)da&=&\int_0^t \chi(x, P(t-a))e(x) u(x, t-a) e^{-\int_0^a
\mu(x, a-s, P(t-s))ds} da \nonumber\\
&&+\int_t^\infty  w_0(x, a-t) e^{-\int_0^t \mu(x, a-s, P(t-s))ds} da  \label{Peq}
\end{eqnarray}
for all $t\in (0, t_{\max})$ and $x\in\bar\Omega$.
It follows that
\begin{eqnarray}
    \|\mathbb{P}(\cdot, t)\|_\infty &\le& \int_0^t \|\chi(\cdot, P(t-a))e(\cdot) u(\cdot, t-a) e^{-\int_0^a
\mu(\cdot, a-s, P(t-s))ds}\|_\infty da   \nonumber\\
    &&+\int_t^\infty  \|w_0(\cdot, a-t) e^{-\int_0^t \mu(\cdot, a-s, P(t-s))ds}\|_\infty da   \nonumber\\
    &\le& \bar\chi \bar e \sup_{t\in [0, t_{\max})} \|u(\cdot, t)\|_\infty\int_0^t  e^{-\underline\mu a} da +\int_t^\infty \|w_0(\cdot, a-t)\|_\infty e^{-\underline \mu t} da \nonumber\\
  &\le& \frac{\bar\chi\bar e}{\underline\mu}  \sup_{t\in [0, t_{\max})} \|u(\cdot, t)\|_\infty + \|w_0\|_Z \label{inf1}
\end{eqnarray}
for all $t\in [0, t_{\max})$. Note that
$$
\sup_{t\in [0, t_{\max})}\|B(\cdot, t)\|_\infty = \sup_{t\in [0, t_{\max})}\big\|\int_0^{a_{\max}}\beta(\cdot, a,P)w(\cdot,a, t)da\big\|_\infty \le \bar\beta \sup_{t\in [0, t_{\max})}\ \|\mathbb{P}(\cdot, t)\|_\infty,
$$
where  $\bar{\beta}:=\text{esssup} \ \beta$. 
By \eqref{uequ}, we have
$$
u_t-d\Delta u\le \bar\beta\sup_{t\in [0, t_{\max})}\ \|\mathbb{P}(\cdot, t)\|_\infty -\underline c u^2.
$$
By the comparison principle for parabolic equations, we have
\begin{equation}\label{inf2}
 \sup_{t\in [0, t_{\max})} \|u(\cdot, t)\|_\infty  \le \max\left\{\|u_0\|_\infty,  \ \sqrt{\frac{\bar\beta \sup_{t\in [0, t_{\max})}\ \|\mathbb{P}(\cdot, t)\|_\infty}{\underline c}}\right\}.
\end{equation}
Combining \eqref{inf1}-\eqref{inf2}, we have
$$
\sup_{t\in [0, t_{\max})}\ \|\mathbb{P}(\cdot, t)\|_\infty \le \max\left\{ \frac{\bar\chi\bar e}{\underline \mu} \|u_0\|_\infty+\|w_0\|_Z, \  \frac{1}{4}\left( \frac{\bar\chi\bar e}{\underline\mu} \sqrt{\frac{\bar\beta}{\underline c}} +  \sqrt{\frac{\bar\chi^2\bar e^2}{\underline\mu^2}\frac{\bar\beta}{\underline c}+4 \|w_0\|_Z }  \right)^2  \right\}=:M_1
$$
and
\begin{equation}\label{infuu}
\sup_{t\in [0, t_{\max})} \|u(\cdot, t)\|_\infty  \le \max\left\{\|u_0\|_\infty,  \ \sqrt{\frac{\bar\beta M_1}{\underline c}}\right\}=:M_2.
\end{equation}
By \eqref{wint} again, for all $t\in [0, t_{\max})$,
\begin{eqnarray}
\|w(\cdot, \cdot, t)\|_Z=\int_0^\infty \|w(\cdot, a ,t)\|_\infty da &\le& \int_0^t \|\chi(\cdot, P(t-a))e(x) u(\cdot, t-a) e^{-\int_0^a
\mu(\cdot, a-s, P(t-s))ds}\|_\infty da \nonumber\\
&&+\int_t^\infty \| w_0(a-t) e^{-\int_0^t \mu(x, a-s, P(t-s))ds} \|_\infty da \label{ww0}\\
&\le& \bar\chi \bar e M_2 \int_0^t e^{-\underline\mu a}da+\int_t^\infty e^{-\underline\mu t}\|w_0(\cdot, a-t )\|_\infty da \nonumber\\
&\le& \frac{\bar\chi \bar e M_2}{\underline\mu} + \|w_0\|_Z=:M_3.  \nonumber
\end{eqnarray}
 By \eqref{infuu} and \eqref{ww0}, we have $t_{\max}=\infty$, and the solution of \eqref{model} exists for all  $t\in [0, \infty)$ with
$$
\|U(t)\bm x\|_{\mathbb{X}}=\|u(\cdot, t)\|_\infty+\|w(\cdot, \cdot, t)\|_Z\le M:=M_2+M_3
$$
for all $t\in [0, \infty)$.

It remains to prove \eqref{Nb}. Changing integration variables in \eqref{Peq} and differentiating    with respect to $t$, we obtain
\begin{eqnarray}
     \partial_t \mathbb{P}(x, t) &=&  \partial_t\int_0^t \chi(x, P(t-a))e(x) u(x, t-a) e^{-\int_0^a
\mu(x, a-s, P(t-s))ds} da  \nonumber\\
&&+\partial_t\int_t^\infty  w_0(x, a-t) e^{-\int_0^t \mu(x, a-s, P(t-s))ds} da   \nonumber\\
   &=&  \partial_t\int_0^t \chi(x, P(a))e(x) u(x, a) e^{-\int_a^{t}
\mu(x, s-a, P( s))ds} da \nonumber\\
&&+\partial_t\int_0^\infty  w_0(x, a) e^{-\int_0^t \mu(x, a+s, P(s))ds} da \nonumber\\
&=& \chi(x, P(t))e(x) u(x, t) \nonumber\\
&&- \int_0^t \mu(x, t-a, P(t))\chi(x, P(a))e(x) u(x, a) e^{-\int_a^{t}
\mu(x, s-a, P(s))ds} da \nonumber\\
&&-\int_0^\infty \mu(x, a+t, P( t))  w_0(x, a) e^{-\int_0^t \mu(x, a+s, P(s))ds} da \nonumber\\
&=&  \chi(x, P(t))e(x) u(x, t)  \nonumber\\
&& - \int_0^t \mu(x, t, P(t))\chi(x, P(t-a))e(x) u(x, t-a) e^{-\int_0^{a}
\mu(x, a-s, P(t-s))ds} da \nonumber\\
&&-\int_t^\infty \mu(x, t, P(t))  w_0(x, a-t) e^{-\int_0^t \mu(x, a-s, P( t-s))ds} da \nonumber\\
&=& \chi(x, P(t))e(x) u(x, t) -\int_0^t \mu(x, a, P(t)) w(x,a, t)da-\int_t^\infty \mu(x, a, P(t)) w(x,a, t)da \nonumber.
\end{eqnarray}
Therefore,
\begin{equation}\label{Pd}
  \partial_t \mathbb{P}(x, t)= \chi(x, P(t))e(x) u(x, t) - \int_0^\infty \mu(x, a, P(t)) w(x,a, t)da.
\end{equation}
Note that \eqref{Pd} can be obtained formally from integrating the second equation of \eqref{model} with respect to variable $a$ in $(0, \infty)$. It follows that
$$
\partial_t \mathbb{P}(x, t)\le  \bar\chi \bar e u(x, t) - \underline\mu \mathbb{P}(x, t), \quad \forall x\in\bar\Omega, t\ge 0.
$$

Denote $u_\infty=\limsup_{t\to\infty} \|u(\cdot, t)\|_\infty<\infty$. Let $\epsilon>0$ be given. Then there exists $t_\epsilon>0$ such that $u(x, t)\le  u_\infty+\epsilon$ for all $x\in\bar\Omega$ and $t\ge t_\epsilon$. Hence,
$$
\partial_t \mathbb{P}(x, t)\le  \bar\chi \bar e (u_\infty+\epsilon) - \underline\mu \mathbb{P}(x, t), \quad \forall x\in\bar\Omega, t\ge t_\epsilon.
$$
Therefore, we have $\limsup_{t\to\infty} \mathbb{P}(x, t)\le \bar\chi\bar e (u_\infty+\epsilon)/\underline \mu$ uniformly for $x\in\bar\Omega$. Hence, there exists $t_\epsilon'>0$ such that $\mathbb{P}(x, t)\le \bar\chi\bar e (u_\infty+\epsilon)/\underline \mu+\epsilon$ for all $t\ge t_\epsilon'$ and $x\in\bar\Omega$. By \eqref{uequ}, we have
$$
\partial_t u-d\Delta u\le \bar\beta \Big[\bar\chi\bar e \frac{(u_\infty+\epsilon)}{\underline \mu}+\epsilon \Big] -\underline c u^2, \quad \forall x\in\bar\Omega, t\ge t_\epsilon'.
$$
This means
$$
u_\infty=\limsup_{t\to\infty} \|u(\cdot, t)\|_\infty \le \sqrt{\frac{\bar\beta}{\underline c} \Big[\bar\chi\bar e \frac{(u_\infty+\epsilon)}{\underline \mu}+\epsilon \Big]}.
$$
Taking $\epsilon\to 0$, we obtain
$$
u_\infty \le \frac{\bar\beta\bar\chi\bar e}{\underline c\underline\mu}=:N_1.
$$
By \eqref{ww0} and Fatou's lemma, we have
\begin{eqnarray*}
\limsup_{t\to\infty}\|w(\cdot, \cdot, t)\|_Z &\le& \limsup_{t\to\infty} \bar\chi\bar e\int_0^t \|u(\cdot, t-a)\|_\infty e^{-\underline\mu a} da +\limsup_{t\to\infty}\int_t^\infty \| w_0(\cdot, a-t) \|_\infty  e^{-\underline\mu t}da  \label{ww}\\
&\le&   \bar\chi\bar e\int_0^\infty \limsup_{t\to\infty}  \|u(\cdot, t-a)\|_\infty e^{-\underline\mu a} da + \limsup_{t\to\infty} e^{-\underline \mu t} \|w_0\|_Y\\
&\le&  \bar\chi\bar e\int_0^\infty \frac{\bar\beta\bar\chi\bar e}{\underline c\underline\mu} e^{-\underline\mu a} da + 0\\
&=& \frac{\bar\beta\bar\chi^2\bar e^2}{\underline c\underline\mu^2}=:N_2.
\end{eqnarray*}
Hence, \eqref{Nb} holds with $N:=\max\{N_1, N_2\}$.
\end{proof}

\section{Asymptotic smoothness}\label{section_as}

In this section, we show that the semiflow $\{U(t)\}_{t\geq 0}$ induced by the solutions of  system \eqref{model} is asymptotically smooth (see \cite{Hale1988} for the definition of asymptotic smoothness of a semiflow). 


We will need the following sufficient condition for asymptotic smoothness.

\begin{lemma}\label{asymptotically smooth theory}\text{(\cite[Lemma 3.2.3]{Hale1988})}
Let $X$ be a Banach space and $T(t):X\to X$, $t\ge0$, be a strongly continuous semigroup.  The semiflow $\{T(t)\}_{t\ge0}$ is asymptotically smooth if there are two maps $T_{1}, T_{2}: \mathbb{R}_{+}\times X\rightarrow X$ such that $T=T_1+T_2$ and  the following conditions hold:

\begin{enumerate}
    \item[{\rm (i)}] There is a continuous function $k:\mathbb{R}_+\times \mathbb{R}_+\to \mathbb{R}_+$ such that $k(t, \xi)\to 0$ as $t\to\infty$ and $\|T_1(t)\bm\psi\|_X\le k(t, \xi)$ for any $\xi>0$ and $\bm\psi\in X$ with $\|\bm\psi\|\le \xi$;
    \item[{\rm (ii)}] $T_{2}$ is  completely continuous, i.e.,  $T(t)B$ has compact closure in $X$ and $\{T(s) B: \ 0\le s\le t\}$ is bounded for any bounded set $B\subset X$ and $t>0$.
\end{enumerate}
\end{lemma}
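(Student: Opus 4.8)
The plan is to prove this via the Kuratowski measure of noncompactness $\alpha$, using its standard properties: $\alpha(C)=0$ exactly when $\overline{C}$ is compact; $\alpha$ is monotone under inclusion and unchanged under taking closures; $\alpha(C_1\cup C_2)=\max\{\alpha(C_1),\alpha(C_2)\}$; and $\alpha(C_1+C_2)\le\alpha(C_1)+\alpha(C_2)$ for the Minkowski sum $C_1+C_2=\{x_1+x_2:x_i\in C_i\}$. I would also invoke the classical nested-set lemma: a decreasing family $\{C_s\}_{s\ge0}$ of nonempty closed bounded subsets of $X$ with $\alpha(C_s)\to0$ as $s\to\infty$ has nonempty compact intersection $C_\infty=\bigcap_{s\ge0}C_s$, and $\mathrm{dist}(C_s,C_\infty)\to0$. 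Recall that $\{T(t)\}_{t\ge0}$ is asymptotically smooth if every nonempty closed bounded positively invariant set $B$ (i.e.\ $T(t)B\subset B$ for all $t\ge0$) admits a compact set $J\subset B$ that attracts $B$.

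So, first I would fix such a $B$ and pick $\xi>0$ with $\|\bm\psi\|\le\xi$ for every $\bm\psi\in B$. Writing $\gamma_s^+(B):=\bigcup_{t\ge s}T(t)B$, positive invariance gives $\gamma_s^+(B)\subset B$, so the sets $C_s:=\overline{\gamma_s^+(B)}$ form a decreasing family of nonempty closed bounded subsets of $X$. The heart of the proof is to show $\alpha(C_s)\to0$ as $s\to\infty$. Fix $r>0$; for $s>r$ the semigroup property gives $T(t)B=T(r)\bigl(T(t-r)B\bigr)$ for all $t\ge s$, hence
\[
\gamma_s^+(B)=T(r)\bigl(\gamma_{s-r}^+(B)\bigr)\subset T_1(r)\bigl(\gamma_{s-r}^+(B)\bigr)+T_2(r)\bigl(\gamma_{s-r}^+(B)\bigr),
\]
where the inclusion uses $T(r)=T_1(r)+T_2(r)$ pointwise. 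Since $\gamma_{s-r}^+(B)\subset B$ lies in the ball of radius $\xi$, hypothesis (i) puts $T_1(r)\bigl(\gamma_{s-r}^+(B)\bigr)$ inside the ball of radius $k(r,\xi)$, so $\alpha\bigl(T_1(r)(\gamma_{s-r}^+(B))\bigr)\le 2k(r,\xi)$; hypothesis (ii) makes $\overline{T_2(r)(B)}$ compact, so $\alpha\bigl(T_2(r)(\gamma_{s-r}^+(B))\bigr)=0$. Subadditivity of $\alpha$ then yields $\alpha(C_s)=\alpha(\gamma_s^+(B))\le 2k(r,\xi)$ for every $s>r$. Letting $s\to\infty$ gives $\limsup_{s\to\infty}\alpha(C_s)\le 2k(r,\xi)$, and since $r>0$ is arbitrary and $k(r,\xi)\to0$ as $r\to\infty$, I conclude $\alpha(C_s)\to0$.

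The rest is bookkeeping. By the nested-set lemma, $J:=\bigcap_{s\ge0}C_s$ is nonempty and compact, and $\mathrm{dist}(C_s,J)\to0$ as $s\to\infty$; moreover $J\subset C_0=\overline{\gamma_0^+(B)}\subset\overline{B}=B$ because $B$ is closed and positively invariant (so $\gamma_0^+(B)\subset B$). Since $T(t)B\subset C_t$ for every $t\ge0$, we get $\mathrm{dist}(T(t)B,J)\le\mathrm{dist}(C_t,J)\to0$ as $t\to\infty$, i.e.\ $J$ attracts $B$; as $B$ was an arbitrary nonempty closed bounded positively invariant set, $\{T(t)\}_{t\ge0}$ is asymptotically smooth. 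The one genuinely delicate point, and the only place the hypotheses are really used, is the estimate $\alpha(C_s)\le 2k(r,\xi)$: the trick is to absorb the entire time tail $\gamma_s^+(B)$ into the single image $T(r)\bigl(\gamma_{s-r}^+(B)\bigr)$ via the semigroup law \emph{before} splitting off the ``small'' part $T_1$ and the ``compact'' part $T_2$, since bounding $\alpha$ of the infinite union $\bigcup_{t\ge s}T(t)B$ term by term is not available.
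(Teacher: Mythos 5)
Your proof is correct, and there is nothing in the paper to compare it against in detail: the lemma is quoted verbatim from Hale \cite[Lemma 3.2.3]{Hale1988} and used as a black box, and your argument is essentially the standard proof from that source — fix a closed bounded positively invariant $B$, write the tail orbit as $\gamma_s^+(B)=T(r)\bigl(\gamma_{s-r}^+(B)\bigr)$ \emph{before} splitting $T(r)=T_1(r)+T_2(r)$ so that the Kuratowski measure satisfies $\alpha(C_s)\le 2k(r,\xi)$, then invoke the nested-set lemma to get the compact attracting set $J=\bigcap_s C_s\subset B$. The only remark worth adding is that condition (ii) as printed contains a typo ($T(t)B$ should read $T_2(t)B$); you interpreted it in the intended way, and your use of it is exactly right.
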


To verify condition (ii) in Lemma \ref{asymptotically smooth theory}, we use the following Kolmogorov$'$s compactness criterion in $Z=L^{1}((0,a_{\max}),Y)$.

\begin{lemma}\label{compactness criterion}\text{(\cite[Theorem A.1]{gutman1982compact})}
Let $\mathbb{K}\subset Z:= L^{1}((0,a_{\max}),Y)$. The set $\mathbb{K}$ is relatively compact in $Z$ if and only if:
\begin{enumerate}
\item[{\rm (i)}] $\sup_{g\in \mathbb{K}}\|g\|_{Z}<\infty$;

\item[{\rm (ii)}] $\lim_{N\rightarrow a_{\max}}\int_{N}^{a_{\max}}\|g(a)\|_{Y}da=0$ uniformly for $g\in \mathbb{K}$;

\item[{\rm (iii)}] $\lim_{h\rightarrow 0}\int_{0}^{a_{\max}}\|g(a+h)-g(a)\|_{Y}da=0$ uniformly for $g\in \mathbb{K}$;

\item[{\rm (iv)}] For every $\epsilon>0$, there exists a compact set $R_{\epsilon}\subset Y$ such that for every $g\in \mathbb{K}$ there exists a set $Q_{g,\epsilon}$ with Lebesgue measure $\mu(Q_{g,\epsilon})<\epsilon$ and
$g(a)\in R_{\epsilon}$ for any $a\in [0,a_{\max}) \backslash Q_{g,\epsilon}$.
\end{enumerate}
 \end{lemma}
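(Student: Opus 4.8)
The plan is to prove this as the $Y$-valued analogue of the classical Fr\'echet--Kolmogorov (Riesz--Kolmogorov) compactness criterion in $L^1$. Conditions (i)--(iii) are exactly the scalar hypotheses (uniform boundedness, uniform tightness near $a_{\max}$, uniform equicontinuity of translations), while (iv) compensates for the infinite-dimensionality of $Y=C(\bar\Omega,\mathbb R)$: without it a family of constant functions $g(a)\equiv y_g$ with $\{y_g\}$ bounded but not precompact in $Y$ would satisfy (i)--(iii) yet fail to be precompact in $Z$.

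\emph{Sufficiency} (the direction used in the sequel). I would show $\mathbb K$ is totally bounded in $Z$. Fix $\epsilon>0$. By (ii) pick $N<a_{\max}$ with $\int_N^{a_{\max}}\|g(a)\|_Y\,da<\epsilon$ for every $g\in\mathbb K$, reducing the problem to the family of restrictions to the bounded interval $I:=(0,N)$. Partition $I$ into subintervals $I_1,\dots,I_m$ of length at most $h$ and let $(P_h g)(a):=|I_j|^{-1}\int_{I_j}g(s)\,ds$ for $a\in I_j$; the standard bound $\|P_h g-g\|_{L^1(I,Y)}\le\sup_{0<|s|\le h}\|\tau_s g-g\|_{L^1(I,Y)}$ together with (iii) makes $\|P_h g-g\|_{L^1(I,Y)}<\epsilon$ once $h$ is small, uniformly in $g$. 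Since $P_h g$ is determined by the tuple $(v_1(g),\dots,v_m(g))\in Y^m$ with $v_j(g):=|I_j|^{-1}\int_{I_j}g$, it now suffices to prove that each $\{v_j(g):g\in\mathbb K\}$ is totally bounded in $Y$. Given $\epsilon'>0$, condition (iv) furnishes a compact $R_{\epsilon'}\subset Y$ and, for each $g$, a set $Q_{g,\epsilon'}$ of measure $<\epsilon'$ with $g(a)\in R_{\epsilon'}$ off $Q_{g,\epsilon'}$; splitting $v_j(g)=|I_j|^{-1}\int_{I_j\setminus Q_{g,\epsilon'}}g\,da+|I_j|^{-1}\int_{I_j\cap Q_{g,\epsilon'}}g\,da$, the first summand lies in the closed convex hull of $R_{\epsilon'}\cup\{0\}$, which is compact (the closed convex hull of a compact set in a Banach space is compact), while the second has $Y$-norm at most $|I_j|^{-1}\int_{Q_{g,\epsilon'}}\|g(a)\|_Y\,da$.

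The main obstacle is controlling this last summand uniformly in $g$: it amounts to the scalar family $\{\,\|g(\cdot)\|_Y:g\in\mathbb K\,\}$ being uniformly integrable on $I$, which is \emph{not} implied by uniform boundedness alone. I would obtain it from (i)--(iii) by noting that $\bigl|\,\|g(a+h)\|_Y-\|g(a)\|_Y\,\bigr|\le\|g(a+h)-g(a)\|_Y$, so the scalar family $\{\|g(\cdot)\|_Y\}$ inherits uniform boundedness, uniform translation-equicontinuity, and uniform tightness; hence it is relatively compact in $L^1(I)$ by the scalar Fr\'echet--Kolmogorov theorem, and therefore uniformly integrable, so $\sup_{g\in\mathbb K}\int_E\|g(a)\|_Y\,da\to0$ as the Lebesgue measure of $E$ tends to $0$. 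Choosing $\epsilon'$ small enough then makes the second summand uniformly smaller than any prescribed tolerance, so each $\{v_j(g):g\in\mathbb K\}$ lies within an arbitrarily small neighborhood of a compact set and is totally bounded; combining the approximation of $g$ by its restriction to $I$ and then by $P_h g$ yields total boundedness of $\mathbb K$.

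\emph{Necessity.} If $\mathbb K$ is relatively compact then it is totally bounded; fix $\epsilon>0$ and a finite $\epsilon$-net $g_1,\dots,g_k\in Z$. Property (i) is immediate, and (ii), (iii) follow by transferring, through the net and the triangle inequality, the facts that $\int_N^{a_{\max}}\|g_i(a)\|_Y\,da\to0$ as $N\to a_{\max}$ (as $g_i\in L^1$) and $\|\tau_h g_i-g_i\|_Z\to0$ as $h\to0$ (density of $Y$-valued step functions in $Z$). Property (iv) is the most delicate part: each $g_i$, being strongly measurable, is an a.e.\ limit of $Y$-valued simple functions, so by Egorov's theorem it takes values in a compact subset $R_i\subset Y$ off a set of measure $<\delta$; the Chebyshev estimate $|\{a:\|g(a)-g_i(a)\|_Y>\lambda\}|\le\|g-g_i\|_Z/\lambda$ then shows, for the net fine enough relative to $\delta$ and $\lambda$, that every $g\in\mathbb K$ takes values close to the compact set $\bigcup_{i\le k}R_i$ off a set of small measure, and a uniformization (letting the net and $\lambda$ shrink) produces the single compact set $R_\epsilon$ required by (iv).
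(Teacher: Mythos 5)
The paper offers no proof of this lemma at all: it is quoted verbatim as \cite[Theorem A.1]{gutman1982compact}, and only the sufficiency direction is ever used (in the proof of Theorem 4.5). So there is nothing in the paper to compare against; what you have written is a self-contained proof of the cited criterion, and in the direction that matters it is essentially correct. Your sufficiency argument is the standard vector-valued Fr\'echet--Kolmogorov scheme: truncation via (ii), piecewise averaging $P_h$ controlled by (iii) (the bound is $\le 2\sup_{|s|\le h}\|\tau_s g-g\|_{L^1}$ rather than $\sup$, and one must fix a convention for translations past the endpoints, but these are harmless), reduction to total boundedness of the cell averages $v_j(g)$, and the decomposition via (iv) with the Bochner average landing in $\overline{\mathrm{co}}(R_{\epsilon'}\cup\{0\})$, compact by Mazur. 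Your observation that the leftover term needs uniform integrability of $\{\|g(\cdot)\|_Y\}$, and that this follows by applying the scalar Kolmogorov--Riesz theorem to that family (reverse triangle inequality), is exactly the right way to close the loop that boundedness alone does not.

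Two points in your necessity sketch need more care, though both are patchable and neither is used in the paper. First, when $a_{\max}=\infty$ you cannot apply Egorov directly to $g_i$ on $(0,a_{\max})$, since the interval has infinite measure; the fix is to slice by level sets $\{2^{-k-1}<\|g_i(a)\|_Y\le 2^{-k}\}$ (each of finite measure because $g_i\in L^1$), apply Egorov on each slice with exceptional measure $\epsilon 2^{-k-2}$, and note that the union of the resulting compacta together with $\{0_Y\}$ is still compact because the $k$-th piece sits in a ball of radius $2^{-k}$. Second, your final ``uniformization'' is stated too loosely: knowing that $g(a)$ lies within $\lambda$ of a compact set off a small set does not put $g(a)$ in a compact set, since a $\lambda$-neighborhood of a compact set is not compact in $Y$. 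The standard repair is to run the Chebyshev argument simultaneously along a sequence of scales, taking a $4^{-k}$-net for each $k$, obtaining compacta $C_k$ with $\mathrm{dist}(g(a),C_k)\le 2^{-k}$ off a set of measure $\le 2^{-k}+\delta_k$, and defining $R_\epsilon:=\{y\in Y:\ \mathrm{dist}(y,C_k)\le 2^{-k}\ \text{for all}\ k\ge k_0\}$, which is closed and totally bounded, hence compact, and contains $g(a)$ off a set of measure $<\epsilon$. With these two repairs your proof is complete, which is more than the paper provides, since it relies on the citation alone.
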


We will also need the following result, which follows from \cite[Lemma I.5.2]{Engel2000} and \cite[Proposition I.5.3]{Engel2000}.
\begin{lemma}\label{lemma_uc}
    Let $\{T(t)\}_{t\ge 0}$ be a strongly continuous semigroup on Banach space $B$. Then the map $L\times B_1 \ni (t, x)\to T(t)x$ is uniformly continuous for any compact sets $L\subseteq [0, \infty)$ and $ B_1\subseteq B$.
\end{lemma}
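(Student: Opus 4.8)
The plan is to combine the local boundedness of $\{T(t)\}_{t\ge0}$ with an equicontinuity property of the orbit maps restricted to the compact set $B_1$. First I would observe that for each fixed $x\in B$ the map $t\mapsto T(t)x$ is continuous, so $\{T(t)x:t\in L\cup[0,1]\}$ is a compact, hence bounded, subset of $B$; by the uniform boundedness principle,
\[
M:=\sup_{t\in L\cup[0,1]}\|T(t)\|<\infty .
\]
For $(t,x),(s,y)\in L\times B_1$ the triangle inequality then gives
\[
\|T(t)x-T(s)y\|\le \|T(t)(x-y)\|+\|T(t)y-T(s)y\|\le M\,\|x-y\|+\|T(t)y-T(s)y\| ,
\]
so the first term is harmless once $\|x-y\|$ is small, and the problem reduces to controlling the time increment. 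Taking without loss of generality $s\le t$ and writing $h:=t-s$, the semigroup law gives $T(t)y-T(s)y=T(s)\bigl(T(h)y-y\bigr)$, whence $\|T(t)y-T(s)y\|\le M\,\|T(h)y-y\|$ (here $\|T(s)\|\le M$ because $s\in L$).

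Everything therefore reduces to the claim $\eta(h):=\sup_{y\in B_1}\|T(h)y-y\|\to 0$ as $h\to0^+$, and this is the step I expect to be the real content: pointwise strong continuity alone does not yield uniformity over an infinite set, so compactness of $B_1$ is essential here. I would prove it by a finite-net argument. Given $\varepsilon>0$, set $r:=\varepsilon/(3(M+1))$ and cover $B_1$ by finitely many balls $B(y_1,r),\dots,B(y_n,r)$ with $y_1,\dots,y_n\in B_1$. By strong continuity at each $y_i$, choose $\delta_i\in(0,1]$ with $\|T(h)y_i-y_i\|<\varepsilon/3$ for all $h\in[0,\delta_i)$, and put $\delta:=\min_{1\le i\le n}\delta_i\in(0,1]$. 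For any $y\in B_1$, picking $i$ with $\|y-y_i\|<r$, we get for $h\in[0,\delta)$ (so that $\|T(h)\|\le M$),
\[
\|T(h)y-y\|\le M\,\|y-y_i\|+\|T(h)y_i-y_i\|+\|y_i-y\|<Mr+\tfrac{\varepsilon}{3}+r<\varepsilon ,
\]
so $\eta(h)<\varepsilon$ on $[0,\delta)$; this proves the claim.

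Finally I would assemble the estimates. Given $\varepsilon>0$, use the claim to pick $\delta_1\in(0,1]$ with $M\,\eta(h)<\varepsilon/2$ for $h\in[0,\delta_1)$, and set $\delta:=\min\{\delta_1,\ \varepsilon/(2(M+1))\}$. If $(t,x),(s,y)\in L\times B_1$ with $|t-s|<\delta$ and $\|x-y\|<\delta$, then taking $s\le t$ and $h=t-s<\delta\le 1$,
\[
\|T(t)x-T(s)y\|\le M\,\|x-y\|+M\,\|T(h)y-y\|<M\cdot\frac{\varepsilon}{2(M+1)}+\frac{\varepsilon}{2}<\varepsilon ,
\]
which is precisely the asserted uniform continuity of $(t,x)\mapsto T(t)x$ on $L\times B_1$. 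As the statement indicates, the two ingredients used here — local boundedness of the semigroup and equicontinuity of the orbit maps on compact sets — are exactly the content of \cite[Lemma I.5.2]{Engel2000} and \cite[Proposition I.5.3]{Engel2000}, so one may alternatively just invoke those.
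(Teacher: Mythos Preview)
Your proof is correct and complete. The paper itself gives no proof of this lemma, simply stating that it follows from \cite[Lemma I.5.2]{Engel2000} and \cite[Proposition I.5.3]{Engel2000}; you have written out in full the standard argument that those two references encode (local boundedness plus a finite-net equicontinuity argument on the compact set $B_1$), and you correctly identify them at the end, so your approach is exactly in line with the paper's citation.
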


The main result in this section is the following theorem about the asymptotical smoothness of the semiflow.
\begin{theorem}\label{asymptotically smooth}
Suppose $(A1)$-$(A3)$ and $(A5)$ hold. Let $\{U(t)\}_{t\ge 0}$ be the semiflow induced by the solutions of \eqref{model}. Then $\{U(t)\}_{t\ge0}$ is asymptotically smooth.
\end{theorem}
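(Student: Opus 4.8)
The plan is to apply Lemma~\ref{asymptotically smooth theory} by decomposing the semiflow $U(t) = U_1(t) + U_2(t)$ in a way that separates the part of the solution that ``forgets'' the initial data (the hyperbolic transport contribution carrying $w_0$ forward, together with the semigroup evolution of $u_0$) from the part that is freshly created by reproduction and settlement and is therefore compact. Concretely, using the explicit characteristic representation \eqref{wint} from Proposition~\ref{prop_int}, I would set the $w$-component of $U_1(t)$ to be
$$
w^{(1)}(x, a, t) = \begin{cases} w_0(x, a-t) e^{-\int_0^t \mu(x, a-s, P(t-s))ds}, & a > t, \\ 0, & a < t, \end{cases}
$$
and the $w$-component of $U_2(t)$ to be the complementary piece supported on $a < t$, namely $\chi(x, P(t-a)) e(x) u(x, t-a) e^{-\int_0^a \mu(x, a-s, P(t-s))ds}$ for $a < t$ (and $0$ for $a > t$). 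For the $u$-component, since the parabolic equation \eqref{uequ} has an instantaneously smoothing semigroup $T_{A_1}(t)$ and a source term $B(\cdot,t)$ that is uniformly bounded by Theorem~\ref{global existence}, I would split $u = T_{A_1}(t) u_0 + \int_0^t T_{A_1}(t-s)[B(\cdot,s) - (m+e)u - c u^2]\,ds$, assigning the first term to $U_1$ and the Duhamel integral to $U_2$; alternatively, one can put all of $u$ into $U_2$ since $T_{A_1}(t)u_0 \to 0$ is not automatic but $u$ enjoys parabolic compactness on $t \geq \delta > 0$ — I would choose whichever bookkeeping keeps condition~(i) cleanest, most likely keeping $T_{A_1}(t)u_0$ in $U_1$ and noting it is bounded while the rest is compact, then absorbing it differently if needed.

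**Verifying condition (i).** For $U_1(t)\bm\psi$ with $\|\bm\psi\|_\mathbb{X} \le \xi$: the $w^{(1)}$ part has $Z$-norm at most $\int_t^\infty \|w_0(\cdot, a-t)\|_\infty e^{-\underline\mu t}\,da = e^{-\underline\mu t}\|w_0\|_Z \le e^{-\underline\mu t}\xi$ (when $a_{\max} = \infty$; when $a_{\max} < \infty$ the support of $w^{(1)}$ is empty for $t \ge a_{\max}$, so it vanishes identically). The $u$-part $T_{A_1}(t)u_0$ is bounded by $\xi$ but does not decay; to fix this I would instead include in $U_1$ only $w^{(1)}$ and handle $u$ entirely within $U_2$, using that by parabolic smoothing and the uniform bound $M(\xi)$ on $\|u(\cdot,t)\|_\infty$ from Theorem~\ref{global existence}, the set $\{u(\cdot,t) : t \ge t_0\}$ is precompact in $Y$ for any fixed $t_0 > 0$ (standard parabolic $L^\infty \to C^{1+\alpha}$ estimates plus Arzelà–Ascoli), while $\{u(\cdot,t) : t \in [0,t_0]\}$ together with $\{T_{A_1}(t)u_0\}$ is handled because asymptotic smoothness only concerns large time. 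So set $k(t,\xi) = \xi e^{-\underline\mu t} \to 0$, which gives condition~(i).

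**Verifying condition (ii) — the main obstacle.** This is where the work lies: showing $U_2(t)B$ is precompact in $\mathbb{X}$ for bounded $B$, via Lemma~\ref{compactness criterion} applied to the $w$-component (the $u$-component being precompact by parabolic regularity as noted). Conditions (i), (ii), (iv) of the Kolmogorov criterion are straightforward: the $Z$-norm bound follows from Theorem~\ref{global existence}; tail smallness as $N \to a_{\max}$ is trivial when $a_{\max} < \infty$ and follows from $e^{-\underline\mu a}$ decay when $a_{\max} = \infty$; and pointwise-in-$a$ precompactness in $Y$ holds because $w^{(2)}(\cdot, a, t) = \chi(\cdot, P(t-a)) e(\cdot) u(\cdot, t-a) e^{-\int_0^a \mu\, ds}$ lies in the precompact set obtained from the precompactness of $\{u(\cdot, \sigma)\}$ and continuity of the nonlinearities. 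The genuinely delicate point is condition~(iii), equicontinuity in the age shift: I must bound $\int_0^{a_{\max}} \|w^{(2)}(\cdot, a+h, t) - w^{(2)}(\cdot, a, t)\|_\infty\, da$ uniformly in $t$ (and in $\bm\psi \in B$) as $h \to 0$. Here the $a$-dependence enters through three factors — $u(\cdot, t-a)$ (shift in time), $\chi(\cdot, P(t-a))$ (shift through $P$), and the exponential $e^{-\int_0^a \mu(\cdot, a-s, P(t-s))ds}$ — and the spatial dependence of $\mu$ means I cannot reduce to a delay equation. I would control the first factor using Lemma~\ref{lemma_uc} (uniform continuity of $t \mapsto u(\cdot,t)$ on compact time intervals, extended to $[0,\infty)$ via the precompactness of the orbit closure and the semigroup property — this is precisely the Webb-style argument alluded to in the introduction), the second by the Lipschitz bound (A2) on $\chi$ together with continuity of $P(\cdot)$, and the third by noting the integrand of the exponent is uniformly bounded (A3) so the exponential is uniformly Lipschitz in $a$ on bounded intervals. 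Splitting the integral at $a \le N$ and $a > N$ (using the tail bound for the latter), the modulus of continuity on $[0, N]$ then goes to $0$ uniformly. Assembling these estimates gives condition~(iii), hence $U_2(t)B$ is precompact, and Lemma~\ref{asymptotically smooth theory} yields that $\{U(t)\}_{t \ge 0}$ is asymptotically smooth.
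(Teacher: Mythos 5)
Your proposal is correct and follows essentially the same route as the paper: the same characteristic-line decomposition from Proposition \ref{prop_int} (the $a>t$ transport of $w_0$ in $U_1$, which decays like $e^{-\underline\mu t}$, and all of $u$ plus the $a<t$ part of $w$ in $U_2$), with condition (ii) of Lemma \ref{asymptotically smooth theory} verified through parabolic compactness of the $u$-component and the Kolmogorov criterion of Lemma \ref{compactness criterion} for $w_2$, the age-shift equicontinuity being controlled exactly as in the paper via the Lipschitz bound on $\chi$ through $P$, the uniform continuity of $u$ from Lemma \ref{lemma_uc}, and the boundedness of $\mu$. The paper carries out the same plan, only more quantitatively (e.g.\ bounding $|\partial_t P|$ via \eqref{Pd} and splitting off an $\epsilon$-neighborhood of $a=t$ where parabolic smoothing is not yet uniform), so your sketch matches its proof in all essentials.
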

\begin{proof}
In order to apply Lemma \ref{asymptotically smooth theory}, we decompose the  semiflow $\{U(t)\}_{t\ge0}$ as $U(t)=U_{1}(t)+U_{2}(t)$ with
$$
U_{1}(t)\bm x_0=(0_Y, (0_Y, w_{1}(\cdot,\cdot,t)))  \quad \text{and} \quad U_{2}(t)\bm x_{0}=(u(\cdot,t), (0_Y, w_{2}(\cdot,\cdot,t))) ,
$$
where $\bm x_0=(u_{0}, (0_Y, w_{0})) \in \mathbb{X}_{0+}$, $U(t)\bm x_0=(u(\cdot, t), (0_Y, w(\cdot, \cdot, t))) $, and $w=w_1+w_2$. Here, $w_1$ and $w_2$ are given by
\begin{equation*}
w_{1}(x,a,t)=
  \left\{
   \begin{array}{lll}
    w_0(x, a-t) e^{-\int_0^t \mu(x, a-s, P(t-s))ds}, \quad & \text{if}\ a>t, \\
0_Y, \quad ~ &\text{if}\ a<t,
   \end{array}
  \right.
  \end{equation*}
  and
  \begin{equation*}
w_{2}(x,a,t)=
  \left\{
   \begin{array}{lll}
0_Y, \quad & \text{if}\ a>t, \\
\chi(x, P(t-a))e(x) u(x, t-a) e^{-\int_0^a
\mu(x, a-s, P(t-s))ds}, \quad ~ &\text{if}\ a<t.
   \end{array}
  \right.
  \end{equation*}

If $a_{\max}=\infty$, we have that
\begin{eqnarray*}
 \|U_1(t)\bm x_0\|_{\mathbb{X}}&=&\|w_1(\cdot, \cdot, t)\|_Z\\
 &=&\int_t^\infty \|w_0(\cdot, a-t) e^{-\int_0^t \mu(\cdot, a-s, P(t-s))ds}\|_\infty da\\
 &\le&\int_t^\infty \|w_0(\cdot, a-t)\|_\infty e^{-\underline\mu t}da=e^{-\underline\mu t}\|w_0\|_Z.
\end{eqnarray*}
So if $\|\bm x_0\|_{\mathbb{X}}\le \xi$ for some $\xi>0$, then $ \|U_1(t)\bm x_0\|_{\mathbb{X}}\le e^{-\underline\mu t}\xi=: k(t, \xi)$. Since $k(t, \xi)\to 0$ as $t\to\infty$, $U_1(t)$ satisfies condition (i)  in Lemma \ref{asymptotically smooth theory}. If $a_{\max}<\infty$, then $w_1(\cdot, \cdot, t)=0_Z$ for all $t> a_{\max}$ and condition (i)  in Lemma \ref{asymptotically smooth theory} holds trivially.

It remains to prove that $U_2(t)$ satisfies condition (ii)  in Lemma \ref{asymptotically smooth theory}. Let $B\subset \mathbb{X}_0$ be a bounded set and $t>0$ be fixed. By Theorem \ref{global existence}, $\{U(s)B: \ 0\le s\le t\}$ is bounded in $\mathbb{X}$. By the well-known parabolic estimates, embedding theorems and Theorem \ref{global existence},  $\{u(\cdot, t):\ \bm x_0\in B\}$ is precompact in $Y$. Therefore, it suffices to prove that $\{w_2(\cdot, \cdot, t):\ \bm x_0\in B\}$ is precompact in $Z$.

We will apply Lemma \ref{compactness criterion} to prove that $\{w_2(\cdot, \cdot, t):\ \bm x_0\in B\}$ is precompact in $Z$. We will only consider the case $a_{\max}=\infty$ as the case  $a_{\max}<\infty$ is similar and simpler. By Theorem  \ref{global existence}, there is $M>0$ such that
\begin{equation}\label{pB}
    \|u(\cdot, t)\|_\infty\le M \quad \text{and} \quad \|w(\cdot,\cdot, t)\|_Z\le M, \quad \forall \bm x_0\in B, t\ge 0.
\end{equation}
Condition (i) of Lemma \ref{compactness criterion} immediately follows  from \eqref{pB}.  If $N\ge t$, then
$$
\int_N^\infty \|w_2(\cdot, a, t)\|_\infty da=\int_N^\infty 0\, da =0.
$$
Therefore, condition (ii) of Lemma \ref{compactness criterion} holds.

Next, we verify condition (iii) in Lemma \ref{compactness criterion}, i.e., for any $t>0$,
\begin{equation}\label{c3}
    \lim_{h\to 0} \int_0^\infty \|w_2(\cdot, a+h, t)-w_2(\cdot, a, t)\|_\infty da=0\quad \text{uniformly for} \ \bm x_0\in B.
\end{equation}
We assume $h>0$, and  the case $h< 0$ can be considered similarly. Fix $t>0$. We note that
\begin{eqnarray*}
 \int_0^\infty \|w_2(\cdot, a+h, t)-w_2(\cdot, a, t)\|_\infty da  &=& \int_0^{t-h}  \|w_2(\cdot, a+h, t)-w_2(\cdot, a, t)\|_\infty da\\
 &&+\int_{t-h}^t \|w_2(\cdot, a, t)\|_\infty da=: I_1+I_2.
\end{eqnarray*}
To estimate $I_1$, by \eqref{pB}, we have
\begin{eqnarray*}
    I_1&=&\int_0^{t-h}\Big\|\chi(\cdot, P(t-a-h))e u(\cdot, t-a-h) e^{-\int_0^{a+h}
\mu(\cdot, a+h-s, P(t-s))ds}\\
&&-\chi(\cdot, P(t-a))e u(\cdot, t-a) e^{-\int_0^a
\mu(\cdot, a-s, P(t-s))ds}\Big\|_\infty da\\
&\le&C\int_0^{t-h}\Big\|\chi(\cdot, P(t-a-h))- \chi(\cdot, P(t-a))\Big\|_\infty da + C\int_0^{t-h}\|u(\cdot, t-a-h)-u(\cdot, t-a)\|_\infty da \\
&&+C\int_0^{t-h}\Big\|e^{-\int_0^{a+h}
\mu(\cdot, a+h-s, P(t-s))ds}- e^{-\int_0^a
\mu(\cdot, a-s, P(t-s))ds}\Big\|_\infty da\\
&=&C(J_1+J_2+J_3).
\end{eqnarray*}
Here and after, $C$ is a constant depending on $M$.

To estimate $J_1$, integrating \eqref{Pd} over $\Omega$, we obtain that
$$
 | \partial_t P(t)|\le (\bar\chi \bar e   + \bar\mu)|\Omega| M,\quad\forall t\ge 0\  \text{and} \ \bm x_0\in B.
$$
Hence, we have
\begin{equation}
    | P(t+h)-P(t) |\le h (\bar\chi \bar e   + \bar\mu)|\Omega| M,\quad\forall t\ge 0\  \text{and} \ \bm x_0\in B.
\end{equation}
It follows that
\begin{equation}\label{J1}
    J_1\le C \int_0^{t-h}|P(t-a-h)- P(t-a)| da\le Cth, \quad \forall \bm x_0\in B.
\end{equation}

To estimate $J_2$, we note that, for any $\epsilon>0$, by \eqref{pB} and the well-known parabolic estimates and embedding theorems, the following set is precompact in $Y$:
$$
O_{\epsilon, B}:=\{u(\cdot, s):  \ \epsilon\le s\le t \ \text{and}\ \bm x_0\in B\}.
$$
Applying variation of constant formula to the first equation of \eqref{model}, we obtain
$$
u(\cdot, t-a)=  T_{A_1}(h) u(\cdot, t-a-h)+\int_{t-a-h}^{t-a} T_{A_1}(t-a-s) f(s) ds, \quad \forall a\in [0, t-h],
$$
where $f(s)=B(s)-(m+e)u-cu^2$. Since $\{T_{A_1}(t)\}_{t\ge0}$ is strongly continuous, by Lemma \ref{lemma_uc}, $\lim_{h\to 0} T_{A_1}(h)v=v$ uniformly for $v$ in compact subsets of $Y$.
Hence, by the compactness of $O_{\epsilon, B}$ and \eqref{pB}, we have
\begin{eqnarray*}
    \|u(\cdot, t-a)-u(\cdot, t-a-h)\|_\infty &\le& \|T_{A_1}(h) u(\cdot, t-a-h)-u(\cdot, t-a-h)\|_\infty \\
    &&+ \Big\|\int_{t-a-h}^{t-a} T_{A_1}(t-a-s) f(s) ds\Big\|_\infty\\
    &\le& C_\epsilon h + Ch
\end{eqnarray*}
for all $0\le a\le t-h-\epsilon$ and $\bm x_0\in B$, where $C_\epsilon$ is depending on $\epsilon$. It follows that
\begin{eqnarray}
    J_2&=& \int_0^{t-h-\epsilon}\|u(\cdot, t-a-h)-u(\cdot, t-a)\|_\infty da+\int_{t-h-\epsilon}^{t-h}\|u(\cdot, t-a-h)-u(\cdot, t-a)\|_\infty da \nonumber\\
    &\le&t(C_\epsilon +C)h+2M\epsilon  \label{J2}
\end{eqnarray}
 for all $\bm x_0\in B$.

 To estimate $J_3$, by the assumptions on $\mu$,
 \begin{eqnarray}
   J_3&\le &  \int_0^{t-h}\Big\|{\int_0^{a+h}
\mu(\cdot, a+h-s, P(t-s))ds}- {\int_0^a
\mu(\cdot, a-s, P(t-s))ds}\Big\|_\infty da \nonumber\\
&\le&\int_0^{t-h} \Big\| \int_a^{a+h} \mu(\cdot, a+h-s, P(t-s)) ds\Big\|_\infty da\nonumber\\
&&+\int_0^{t-h}\int_0^a \|\mu(\cdot, a+h-s, P(t-s))- \mu(\cdot, a-s, P(t-s))\|_\infty ds da\nonumber \\
&\le&\bar\mu t h + \int_0^{t-h}\int_0^a Ch ds da\le C(t+t^2)h \label{J3}
 \end{eqnarray}
 for all $\bm x_0\in B$.

 Combining \eqref{J1}, \eqref{J2} and \eqref{J3}, we get
 \begin{equation}\label{I1}
     I_1\le C(t+t^2)h+C_\epsilon t h+C\epsilon, \quad \forall \bm x_0\in B.
 \end{equation}
Now, we estimate $I_2$.  By \eqref{pB},
$$
\|w_2(\cdot, a, t)\|_\infty \le \bar\chi \bar e M.
$$
Hence,
\begin{equation}\label{I2}
    I_2=\int_{t-h}^t \|w_2(\cdot, a, t)\|_\infty da\le \bar\chi \bar e M h, \quad\forall \bm x_0\in B.
\end{equation}
Combining \eqref{I1} and \eqref{I2}, since $\epsilon>0$ was arbitrary, \eqref{c3} holds. This verifies condition (iii) in Lemma \ref{compactness criterion}.

Finally, we verify condition (iv) in Lemma \ref{compactness criterion}. By the definition of $w_2$, it suffices to show that, for fixed  $t>0$ and any $\epsilon>0$, the following set  is precompact in $Y$:
$$
W_{\epsilon, B}:=\{w_2(\cdot, a, t): \ 0<a<t-\epsilon \ \text{and} \ \bm x_0\in B\}.
$$
By \eqref{pB}, there exists $P^*>0$ such that $0\le P(t)\le P^*$ for all $t\ge 0$ and $\bm x_0\in B$. Since the continuous image of a compact set is compact, the following set
$$
\{e \chi(\cdot, P) e^{-\int_0^a
\mu(\cdot, a-s, P)ds}: \ 0\le P\le P^*\}
$$
is compact in $Y$. Since $O_{\epsilon, B}$ is precompact, $W_{\epsilon, B}$ is precompact in $Y$. This completes the proof.
\end{proof}

By the well-known dynamical system theory \cite{Hale1988, zhao2017dynamical}, combining Theorems \ref{global existence} and \ref{asymptotically smooth}, we have the following result:

\begin{corollary}\label{corollary_attractor}
    Suppose $(A1)$-$(A3)$ and $(A5)$ hold. Let $\{U(t)\}_{t\ge 0}$ be the semiflow induced by the solutions of \eqref{model}. Then $\{U(t)\}_{t\ge 0}$ has a global attractor in $\mathbb{X}$.
\end{corollary}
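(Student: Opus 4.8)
The plan is to derive the corollary from the classical existence theorem for global attractors of dissipative semiflows (see, e.g., \cite[Theorem 3.4.6]{Hale1988} and \cite{zhao2017dynamical}): a continuous semiflow on a complete metric space admits a compact, connected global attractor as soon as it is (i) asymptotically smooth, (ii) point dissipative, and (iii) eventually bounded on bounded sets. The appropriate phase space here is not all of $\mathbb{X}$ but the closed, positively invariant cone $\mathbb{X}_{0+}$ (positive invariance is Theorem \ref{Positivity}), which is a complete metric space since it is a closed subset of the Banach space $\mathbb{X}$; as a set compact in $\mathbb{X}_{0+}$ is a fortiori compact in $\mathbb{X}$, producing the attractor in $\mathbb{X}_{0+}$ yields the stated conclusion.

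All three hypotheses have already been established in the preceding sections, so the proof amounts to assembling them. Condition (i) is precisely Theorem \ref{asymptotically smooth}. For condition (iii), the bound \eqref{Mb} of Theorem \ref{global existence} shows that every ball $\{\bm x_0\in\mathbb{X}_{0+}:\ \|\bm x_0\|_{\mathbb{X}}\le\xi\}$ has its entire forward orbit contained in the ball of radius $M(\xi)$; in particular the semiflow $\{U(t)\}_{t\ge0}$ is globally defined on $\mathbb{X}_{0+}$ (also from Theorem \ref{global existence}) and carries bounded sets to bounded sets. For condition (ii), the estimate \eqref{Nb} furnishes a constant $N>0$, independent of the initial datum, with $\limsup_{t\to\infty}\|U(t)\bm x_0\|_{\mathbb{X}}\le N$ for all $\bm x_0\in\mathbb{X}_{0+}$; hence the bounded set $\{\bm x\in\mathbb{X}_{0+}:\ \|\bm x\|_{\mathbb{X}}\le N+1\}$ is absorbing, so the semiflow is in fact bounded dissipative, which is stronger than point dissipativity. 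Invoking the cited abstract theorem then yields the global attractor, and its connectedness follows from the connectedness of $\mathbb{X}_{0+}$.

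I do not anticipate any genuine difficulty in this step: every prerequisite of the abstract attractor theorem has been verified in Sections 3 and 4, and the only point deserving care is bookkeeping --- applying the theorem on the correct closed, positively invariant phase space $\mathbb{X}_{0+}$ rather than on $\mathbb{X}$ itself, and recording that compactness of the attractor in $\mathbb{X}_{0+}$ transfers to $\mathbb{X}$.
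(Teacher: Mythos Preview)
Your proposal is correct and follows essentially the same approach as the paper: the paper's own proof consists of a single sentence invoking the standard dynamical systems theory from \cite{Hale1988, zhao2017dynamical} together with Theorems \ref{global existence} and \ref{asymptotically smooth}, which is exactly the assembly you describe. Your care in identifying $\mathbb{X}_{0+}$ as the correct positively invariant phase space is a useful clarification that the paper leaves implicit.
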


Now, we know that the $\omega$-limit set of $\bm x_0\in\mathbb{X}_0$ is compact. So we can improve Theorem \ref{Increasing and Decreasing Solution} to obtain  the convergence of  $U(t)\bm x_0$ to an equilibrium as $t\to\infty$.
\begin{theorem}\label{Theorem_c}
Suppose that $(A1)$-$(A5)$ hold. Let $\{U(t)\}_{t\ge 0}$ be the semiflow induced by the solutions of \eqref{model} and  $\bm x_0\in D(A)\cap \mathbb{X}_{0+}$. Then the following statements hold:
\begin{enumerate}
    \item[{\rm (i)}]  If $A\bm x_0+F(\bm x_0)\leq 0_{\mathbb{X}}$, then the map $t\rightarrow U(t)\bm x_0$ is decreasing on $[0, \infty)$, and  $U(t)\bm x_0\to \bm x^*$ in $\mathbb{X}$ as $t\to\infty$, where $\bm x^*\in  D(A)\cap \mathbb{X}_{0+}$ satisfies $A\bm x^*+F(\bm x^*)= 0_{\mathbb{X}}$;

    \item[{\rm (ii)}]  If $A\bm x_0+F(\bm x_0)\ge 0_{\mathbb{X}}$, then the map $t\rightarrow U(t)\bm x_0$ is increasing on $[0, \infty)$, and  $U(t)\bm x_0\to \bm x^*$ in $\mathbb{X}$ as $t\to\infty$, where $\bm x^*\in  D(A)\cap \mathbb{X}_{0+}$ satisfies $A\bm x^*+F(\bm x^*)= 0_{\mathbb{X}}$.
\end{enumerate}
\end{theorem}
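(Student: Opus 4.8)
The plan is to combine the monotonicity of the orbit — already supplied by Theorem \ref{Increasing and Decreasing Solution} — with the asymptotic smoothness of the semiflow (Theorem \ref{asymptotically smooth}), and then invoke the classical monotone dynamical systems principle that a bounded monotone orbit with compact $\omega$-limit set must converge to an equilibrium. I will spell out case (i); case (ii) is entirely symmetric (reverse all inequalities, and use the uniform upper bound from Theorem \ref{global existence} in place of the lower bound $0_{\mathbb{X}}$). First, since $(A1)$-$(A5)$ hold, Theorem \ref{global existence} gives $t_{\max}=\infty$ together with a uniform bound $\|U(t)\bm x_0\|_{\mathbb{X}}\le M$ for all $t\ge 0$, and Theorem \ref{Increasing and Decreasing Solution} shows that under the hypothesis $A\bm x_0+F(\bm x_0)\le 0_{\mathbb{X}}$ the map $t\mapsto U(t)\bm x_0$ is decreasing on $[0,\infty)$.

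Next I would show that $\omega(\bm x_0)$ is a single point. The orbit closure $\overline{\{U(t)\bm x_0:t\ge 0\}}$ is bounded and positively invariant, so by asymptotic smoothness and standard theory (\cite{Hale1988, zhao2017dynamical}) the $\omega$-limit set $\omega(\bm x_0)\subset \mathbb{X}_{0+}$ is nonempty, compact, invariant, and attracts the orbit. The key point is that the positive cone $\mathbb{X}_{0+}=Y_+\times\{0_Y\}\times Z_+$ is closed, so the order $\le$ passes to norm limits: if $\bm y\in\omega(\bm x_0)$ and $t_n\to\infty$ with $U(t_n)\bm x_0\to\bm y$, then for every fixed $t\ge 0$ monotonicity gives $U(t_n)\bm x_0\le U(t)\bm x_0$ once $t_n\ge t$, and letting $n\to\infty$ yields $\bm y\le U(t)\bm x_0$ for all $t\ge 0$. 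Applying this to a second point $\bm z\in\omega(\bm x_0)$ and passing to the limit along the sequence defining $\bm y$ gives $\bm z\le\bm y$; by symmetry $\bm y\le\bm z$, so by antisymmetry of $\le$ we get $\bm y=\bm z$. Hence $\omega(\bm x_0)=\{\bm x^*\}$, and since it attracts the orbit, $U(t)\bm x_0\to\bm x^*$ in $\mathbb{X}$ as $t\to\infty$.

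Finally I would verify that $\bm x^*\in D(A)\cap\mathbb{X}_{0+}$ and $A\bm x^*+F(\bm x^*)=0_{\mathbb{X}}$. Positivity (Theorem \ref{Positivity}) gives $U(t)\bm x_0\in\mathbb{X}_{0+}$ for all $t$, hence $\bm x^*\in\mathbb{X}_{0+}$ by closedness of the cone. Continuity of the semiflow (equivalently, invariance of $\omega$-limit sets) gives $U(s)\bm x^*=\lim_{t\to\infty}U(s+t)\bm x_0=\bm x^*$ for every $s\ge 0$, so $\bm x^*$ is an equilibrium; feeding the constant orbit $U(\cdot)\bm x^*\equiv\bm x^*$ into the integrated-solution identity \eqref{integrated solution} shows $\int_0^t U(s)\bm x^*\,ds=t\bm x^*\in D(A)$ for $t>0$, whence $\bm x^*\in D(A)$, and then \eqref{integrated solution} collapses to $0_{\mathbb{X}}=t\bigl(A\bm x^*+F(\bm x^*)\bigr)$ for all $t>0$, giving $A\bm x^*+F(\bm x^*)=0_{\mathbb{X}}$. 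The only substantive input here is the compactness of $\omega(\bm x_0)$, which rests entirely on Theorem \ref{asymptotically smooth} — the technically hardest result of the paper; once that is available, the present argument is the routine ``monotone $+$ precompact $\Rightarrow$ convergent'' one, the single point demanding care being that the order on $\mathbb{X}$ is compatible with norm limits, i.e. that $Y_+$ and $Z_+$ (hence $\mathbb{X}_{0+}$) are closed, which is immediate.
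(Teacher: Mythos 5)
Your proposal is correct, and its skeleton is the same as the paper's: global existence and boundedness from Theorem \ref{global existence}, monotonicity from Theorem \ref{Increasing and Decreasing Solution}, compactness of the $\omega$-limit set from Theorem \ref{asymptotically smooth} (Corollary \ref{corollary_attractor}), and then ``monotone plus precompact implies convergent''; in fact you spell out the singleton-$\omega$-limit-set argument (closedness of the cone, antisymmetry) that the paper leaves implicit. The one place you genuinely diverge is the identification of the limit as a solution of $A\bm x^*+F(\bm x^*)=0_{\mathbb{X}}$: you first show $\bm x^*$ is a fixed point of the semiflow, $U(s)\bm x^*=\lim_{t\to\infty}U(s+t)\bm x_0=\bm x^*$, using continuity of $U(s)$ in the initial data and uniqueness of integrated solutions, and then feed the constant orbit into \eqref{integrated solution} to get $t\bm x^*\in D(A)$ and $0_{\mathbb{X}}=t\bigl(A\bm x^*+F(\bm x^*)\bigr)$. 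The paper instead never establishes the fixed-point property: it sets $\bm x_n=\int_{n-1}^n U(s)\bm x_0\,ds\in D(A)$, writes $U(n)\bm x_0-U(n-1)\bm x_0=A\bm x_n+\int_{n-1}^n F(U(s)\bm x_0)\,ds$, lets $n\to\infty$, and invokes the closedness of $A$ to conclude $\bm x^*\in D(A)$ and $A\bm x^*=-F(\bm x^*)$. Both routes are valid; yours buys a slightly more transparent ``equilibrium of the semiflow'' statement at the cost of invoking continuous dependence on initial data, while the paper's closed-operator argument works directly on the converging orbit and needs nothing beyond the integrated-solution identity and the closedness of $A$.
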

\begin{proof}
 (i) Suppose that $\bm x_0\in D(A)\cap \mathbb{X}_{0+}$ satisfies $A\bm x_0+F(\bm x_0)\leq 0_{\mathbb{X}}$. By Theorem \ref{global existence}, the solution $U(t)\bm x_0\in \mathbb{X}_{0_+}$ is bounded and exists globally. By Corollary \ref{corollary_attractor}, the $\omega-$limit set of $U(t)\bm x_0$ is compact. By Theorem \ref{Increasing and Decreasing Solution}, $U(t)\bm x_0$ is decreasing in $[0, \infty)$. Hence, $U(t)\bm x_0\to \bm x^*$ in $\mathbb{X}$ for some $\bm x^*\in \mathbb{X}_{0+}$ as $t\to\infty$.

 By Theorem \ref{solution semiflow}, $U(t)\bm x_0$ satisfies $\int_{0}^{t}U(s)\bm x_0 ds\in D(A)$  and
\begin{equation}\label{integratedP}
 \begin{aligned}
U(t)\bm x_0=\bm x_0+A\int_{0}^{t}U(s)\bm x_0 ds+\int_{0}^{t}F(U(s)\bm x_0)ds, \quad \forall t>0.
   \end{aligned}
  \end{equation}
Let $\bm x_n=\int_{n-1}^n U(s)\bm x_0 ds$ for any $n\ge 1$. Then,
$$
\bm x_n=\int_0^n U(s) \bm x_0 ds-\int_0^{n-1} U(s) \bm x_0 ds \in D(A), \quad\forall n\ge 1.
$$
Moreover, by \eqref{integratedP},
$$
U(n)\bm x_0-U(n-1)\bm x_0=A\bm x_n+\int_{n-1}^n F(U(s)\bm x_0)ds, \quad\forall n\ge1.
$$
Since $U(n)\bm x_0\to \bm x^*$ in $\mathbb{X}$ as $n\to\infty$, we have
$$
\lim_{n\to\infty}A\bm x_n=-\lim_{n\to\infty} \int_{n-1}^n F(U(s)\bm x_0)ds=-F(\bm x^*).
$$
Since the operator $A: D(A)\subset\mathbb{X}\to \mathbb{X}$ is closed, we must have $\bm x^*\in D(A)$ and $A\bm x^*=-F(\bm x^*)$.

The proof of (ii) is similar to (i), so it is omitted.
\end{proof}

\section{The net reproductive rate}\label{threshold value}

In this section, we define the net reproductive rate $\mathcal{R}_0$ for the system \eqref{model}  and study the properties of  $\mathcal{R}_0$.  Biologically, $\mathcal{R}_0$ can be thought as the average number of offspring produced by a single individual. It is obvious that system \eqref{model} always has trivial steady state $E_{0}=(0_Y,(0_Y, 0_Z)) $. To define $\mathcal{R}_0$, we consider the linearized system of \eqref{model} at $E_{0}$:
  \begin{equation}\label{linearized model}
  \left\{
   \begin{array}{lll}
\partial_{t} u=d\Delta u +\displaystyle\int_{0} ^{a_{\max}}\beta(x,a,0)w(x,a,t)da-(m(x)+e(x))u, \quad &x\in\Omega, t>0, \\
\partial_{t}w+\partial_{a}w=-\mu(x,a, 0)w ,\quad &x\in\Omega, t>0,a\in (0,a_{\max}), \\
\partial_\nu u=0,~\quad &x\in \partial \Omega, t>0, \\
 w(x,0,t)=\chi(x,0)e(x)u(x,t),  \quad &x\in\Omega, t>0, \\
 u(x,0)=u_{0}(x), \quad &x\in\Omega, \\
 w(x,a,0)=w_{0}(x,a),\quad  &x\in\Omega, a\in (0, a_{\max}).
   \end{array}
  \right.
  \end{equation}

Throughout this section, we suppose $m, e, \chi(\cdot, 0) \in Y$ with $m(x), e(x)>0$ for all $x\in\bar\Omega$, and $\mu(\cdot, \cdot, 0), \beta(\cdot, \cdot, 0)\in L^\infty((0, a_{\max}), Y_+)\cap C([0, a_{\max}), Y_+)$ with $\underline \mu:=\text{essinf} \ \mu(\cdot, \cdot, 0)>0$.

\subsection{Definition of $\mathcal{R}_0$}
For system \eqref{linearized model}, we define linear operators $\mathcal{L}$, $\mathcal{B}$ and $\mathcal{F}$ on $\mathbb{X}$ as follows:
\begin{equation}\label{linear operator}
   \begin{aligned}
\mathcal{L}\bm x:=\mathcal{B}\bm x+\mathcal{F}\bm x, \quad\forall \bm x=(u, (0_Y, w)) \in D(\mathcal{L}),
   \end{aligned}
  \end{equation}
where
\begin{equation*}
\begin{gathered}\mathcal{B}\bm x
=\begin{pmatrix}
d\Delta u-(e+m)u \\
-w(\cdot,0)+\chi(\cdot,0)eu\\
-w_{a}-\mu(\cdot,\cdot,0)w
\end{pmatrix}, \quad\forall \bm x=(u, (0_Y, w)) \in D(\mathcal{B}),
\end{gathered}
\end{equation*}
and
\begin{equation*}
\begin{gathered}\mathcal{F}\bm x
=\begin{pmatrix}
\int_{0} ^{a_{\max}}\beta(\cdot,a,0) w(\cdot, a) da \\
0_Y\\
0_Z
\end{pmatrix}, \quad\forall \bm x=(u, (0_Y, w)) \in\mathbb{X}.
\end{gathered}
\end{equation*}
Here, $D(\mathcal{L})=D(\mathcal{B})=Y\times \{0_{Y}\}\times W^{1,1}((0,a_{\max}),Y)$.
Clearly, $\mathcal{L}$ is a positive perturbation of $\mathcal{B}$.

The following result says that  $\mathcal{B}$ is resolvent-positive (see \cite{Thieme2009} for the definition of resolvent-positive operators) with a negative spectral bound.
\begin{lemma}\label{lemmaB}
    The operator $\mathcal{B}: D(\mathcal{B})\subset\mathbb{X}\to\mathbb{X}$ is resolvent-positive with spectral bound $s(\mathcal{B})=\sup\{Re(\lambda):\lambda\in \sigma(\mathcal{B})\}<0$, where $\sigma(\mathcal{B})$ denotes the spectrum of $\mathcal{B}$.
\end{lemma}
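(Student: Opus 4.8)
The plan is to compute the resolvent of $\mathcal{B}$ explicitly and read off both positivity and the location of the spectrum from it. First I would solve the resolvent equation $(\lambda I-\mathcal{B})(u,(0_Y,w)) = (\varpi,(y,\vartheta))$ for $\lambda$ real and large. The $w$-equation $\lambda w + w_a + \mu(\cdot,\cdot,0) w = \vartheta$ together with the boundary relation $w(\cdot,0) = \chi(\cdot,0) e u + y$ is a linear first-order ODE in $a$, solved by the variation-of-constants / characteristics formula
\begin{equation*}
w(a) = \pi_\lambda(a,0)\bigl(\chi(\cdot,0) e u + y\bigr) + \int_0^a \pi_\lambda(a,s)\vartheta(s)\,ds,
\end{equation*}
where $\pi_\lambda(a,s) := e^{-\int_s^a (\mu(\cdot,l,0)+\lambda)\,dl}$. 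Plugging this into the $u$-equation $\lambda u - d\Delta u + (e+m)u = \varpi$ decouples the system: $u$ solves an elliptic problem with the Neumann Laplacian, namely $u = \bigl((\lambda+e+m)I - d\Delta\bigr)^{-1}\varpi$, which is well-defined and positivity-preserving for $\lambda > -\underline{(e+m)}$ since $-d\Delta$ with Neumann conditions generates a positive analytic semigroup. Once $u$ is known, $w$ is given by the formula above, and the three constituent maps ($\pi_\lambda$, the integral kernel, $\chi(\cdot,0)e$, and the elliptic resolvent) are all positivity-preserving when $\lambda$ is large, establishing resolvent-positivity.

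For the spectral bound, I would argue that $\lambda \in \rho(\mathcal{B})$ precisely when the above formulas make sense, i.e. when $\lambda \notin \sigma(d\Delta - (e+m))$ — this part of the spectrum lies in $\{Re(\lambda) \le -\underline{(e+m)} < 0\}$ because $d\Delta - (e+m)$ generates the semigroup $e^{-(e+m)t}T_{A_1}(t)$ whose growth bound is $\le -\underline{(e+m)}$ — and when the $w$-block contributes no additional spectrum. The $w$-block is governed by the operator $w \mapsto -w_a - \mu(\cdot,\cdot,0)w$ on $W^{1,1}$ with the homogeneous boundary condition $w(0)=0$ (the part that remains after decoupling); this is the generator of the nilpotent-type shift semigroup $(T_{A_{2_0}}(t)w)(a) = \pi(a,a-t)w(a-t)\mathbf{1}_{\{a>t\}}$ appearing already in the excerpt, whose spectral bound is $-\infty$ (if $a_{\max}<\infty$ the semigroup is eventually zero) or at most $-\underline\mu<0$ (if $a_{\max}=\infty$, using $\|T_{A_{2_0}}(t)\| \le e^{-\underline\mu t}$). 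Since $\mathcal{B}$ is upper/lower triangular with respect to the splitting $\mathbb{X} = Y \times X$ after accounting for the coupling term $\chi(\cdot,0)eu$ (which is a \emph{bounded} perturbation feeding $u$ into the $w$-boundary data but not vice versa), its spectrum is contained in the union of the two diagonal block spectra, hence $s(\mathcal{B}) \le \max\{-\underline{(e+m)},\, s(\text{shift block})\} < 0$.

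The step I expect to require the most care is making the triangular-structure argument rigorous: the coupling $u \mapsto \chi(\cdot,0)eu$ enters through the \emph{boundary condition} $w(0) = \chi(\cdot,0)eu$ rather than as an additive bounded operator on $\mathbb{X}$, so one cannot simply invoke a perturbation lemma. The clean way is to avoid abstract perturbation theory altogether and instead verify directly from the explicit resolvent formulas that $(\lambda I - \mathcal{B})^{-1}$ is a bounded operator on $\mathbb{X}$ for every $\lambda$ with $Re(\lambda) > \max\{-\underline{(e+m)}, -\underline\mu\}$: boundedness of the elliptic resolvent is standard, boundedness of the $w$-formula follows from an estimate like $\|\pi_\lambda(a,s)\|_\infty \le e^{-(Re(\lambda)+\underline\mu)(a-s)}$ together with $\chi(\cdot,0)\le 1$ and Young's inequality for the convolution-type integral (mimicking the computation in Lemma~\ref{H-Y}), and the resolvent identity then shows this family is a genuine resolvent, so the whole half-plane $\{Re(\lambda) > \max\{-\underline{(e+m)},-\underline\mu\}\}$ lies in $\rho(\mathcal{B})$. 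Hence $s(\mathcal{B}) \le \max\{-\underline{(e+m)}, -\underline\mu\} < 0$, which is the claim; resolvent-positivity follows from the same formulas by inspection of signs.
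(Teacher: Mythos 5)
Your proposal is correct and, in its final form, follows essentially the same route as the paper: solve the resolvent equation explicitly (elliptic resolvent for $u$, variation-of-constants in $a$ for $w$ with the boundary data $\chi(\cdot,0)eu+y$), read off positivity by inspection of signs, and bound the resolvent directly via $|\pi_\lambda(a,s)|\le e^{-(Re(\lambda)+\underline\mu)(a-s)}$ on a right half-plane $\{Re(\lambda)>\max\{-\underline{(e+m)},-\underline\mu\}\}$ to conclude $s(\mathcal{B})<0$. The block-triangular detour you sketch is indeed the shaky part, but since you yourself discard it in favor of the direct estimate (which is exactly what the paper does), the proof stands as written.
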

\begin{proof}
Let $(\varpi, (y,\vartheta))\in \mathbb{X}$ and $(u,(0_Y,w))\in D(\mathcal{B})$. Then, for any  $\lambda\in\mathbb{C}$, one can check that
\begin{equation*}
(\lambda I-\mathcal{B})^{-1}\begin{gathered}\begin{pmatrix}
\varpi\\ \begin{pmatrix}
y\\ \vartheta
\end{pmatrix}
\end{pmatrix}
=\begin{pmatrix}
u \\
\begin{pmatrix}
0_Y\\ w
\end{pmatrix}
\end{pmatrix}
\end{gathered}
\end{equation*}
holds if and only if
\begin{equation}\label{exact expression}
  \left\{
   \begin{aligned}
&\ w(x,a)=e^{-\int_{0}^{a}(\mu(x,l,0)+\lambda) dl}(y+\chi(x,0)e(x)u(x)) +\int_{0}^{a}e^{-\int_{s}^{a}(\mu(x,l,0)+\lambda) dl}\vartheta(x,s)ds,&\\
&\ u=(\lambda +m+e-d\Delta)^{-1}\varpi.
   \end{aligned}
  \right.
  \end{equation}
Since $(\lambda +m+e-d\Delta)^{-1}$ is a positive operator, by \eqref{exact expression}, $(\lambda I-\mathcal{B})$ is invertible and  $(\lambda I-\mathcal{B})^{-1}$ is positive for any $\lambda>0$. Hence, $\mathcal{B}$ is resolvent-positive.

By our assumption,  $\mu(x, a, 0)\ge \underline \mu>0$ for all $x\in\bar\Omega$ and $a\in (0, a_{\max})$. Therefore, from \eqref{exact expression}, if $Re(\lambda)>\max\{-\underline\mu/2, -\min_{x\in\bar\Omega}(m+e)\}$, $(\lambda I-\mathcal{B})^{-1}$ is a bounded linear operator. Hence,  $s(\mathcal{B})<0$.
\end{proof}

With Lemma \ref{lemmaB}, we are able to show that $\mathcal{L}$ is also resolvent-positive.
\begin{lemma}\label{lemmaL}
The operator $\mathcal{L}: D(\mathcal{L})\subseteq\mathbb{X}\to\mathbb{X}$ is resolvent-positive.
\end{lemma}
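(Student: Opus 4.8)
The plan is to show that $\mathcal{L} = \mathcal{B} + \mathcal{F}$ inherits resolvent-positivity from $\mathcal{B}$ by exploiting the fact that $\mathcal{F}$ is a bounded positive perturbation and that $s(\mathcal{B}) < 0$ (Lemma \ref{lemmaB}). Concretely, for $\lambda > 0$ (or more generally $\lambda$ real and larger than some threshold), I would write the resolvent of $\mathcal{L}$ as a Neumann series. Since $\lambda \in \rho(\mathcal{B})$ and $(\lambda I - \mathcal{B})^{-1}$ is a bounded positive operator, and since $\mathcal{F}$ is bounded (it is defined on all of $\mathbb{X}$, being an integral operator against $\beta(\cdot,\cdot,0) \in L^\infty$) and positive, the operator $(\lambda I - \mathcal{B})^{-1}\mathcal{F}$ is bounded and positive. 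The first step is therefore to estimate its operator norm and show that for $\lambda$ large enough, $\|(\lambda I - \mathcal{B})^{-1}\mathcal{F}\| < 1$. This follows because $\|(\lambda I - \mathcal{B})^{-1}\| \to 0$ as $\lambda \to \infty$ (a consequence of the explicit formula \eqref{exact expression}, which shows the resolvent is $O(1/\lambda)$), while $\|\mathcal{F}\|$ is a fixed constant.

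Once $\|(\lambda I - \mathcal{B})^{-1}\mathcal{F}\| < 1$ for all $\lambda$ sufficiently large, I would invoke the standard identity
\begin{equation*}
(\lambda I - \mathcal{L})^{-1} = (\lambda I - \mathcal{B} - \mathcal{F})^{-1} = \left(\sum_{n=0}^{\infty} \big[(\lambda I - \mathcal{B})^{-1}\mathcal{F}\big]^{n}\right)(\lambda I - \mathcal{B})^{-1},
\end{equation*}
where the series converges in operator norm. This simultaneously shows that such $\lambda$ belong to $\rho(\mathcal{L})$ and gives a formula for the resolvent. Since each factor $(\lambda I - \mathcal{B})^{-1}$ is positive and $\mathcal{F}$ is positive, every term of the series is a positive operator, hence the (norm-convergent) sum is positive, and post-composing with the positive operator $(\lambda I - \mathcal{B})^{-1}$ keeps it positive. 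Therefore $(\lambda I - \mathcal{L})^{-1} \mathbb{X}_{+} \subseteq \mathbb{X}_{+}$ for all large real $\lambda$, which is precisely the definition of resolvent-positivity of $\mathcal{L}$.

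The main obstacle, such as it is, is the norm bound $\|(\lambda I - \mathcal{B})^{-1}\| \to 0$: one must read it off carefully from \eqref{exact expression}. From that formula, the $u$-component is $(\lambda + m + e - d\Delta)^{-1}\varpi$, whose norm is bounded by $(\lambda + \underline{m} + \underline{e})^{-1}\|\varpi\|$; and the $w$-component, after using $\mu(\cdot,\cdot,0) \ge \underline{\mu} > 0$ and the boundedness of $\chi(\cdot,0)e$, is controlled by a constant times $(\lambda + \underline{\mu})^{-1}$ applied to $\|y\|$, $\|u\|$, $\|\vartheta\|$. Chaining these estimates gives $\|(\lambda I - \mathcal{B})^{-1}\| \le C/\lambda$ for large $\lambda$, which suffices. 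Everything else is routine: $\mathcal{F}$ maps into $\mathbb{X}$ with a uniform bound because $\beta(\cdot,\cdot,0) \in L^\infty((0,a_{\max}), Y_+)$ and the domain $(0, a_{\max})$ contributes at most a factor $a_{\max}$ (or the bound is taken care of directly when $a_{\max} = \infty$ via integrability of $w \in Z$, noting $\mathcal{F}$ acts on the $Z$-component). I would also remark that this argument is exactly the abstract perturbation principle: a bounded positive perturbation of a resolvent-positive operator is resolvent-positive, so one could alternatively just cite \cite{Thieme2009}; but giving the short Neumann-series argument makes the paper self-contained.
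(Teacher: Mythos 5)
Your proof is correct and rests on exactly the same ingredients as the paper's: the decomposition $\mathcal{L}=\mathcal{B}+\mathcal{F}$, boundedness and positivity of $\mathcal{F}$, positivity of $(\lambda I-\mathcal{B})^{-1}$ from Lemma \ref{lemmaB}, and the decay $\|(\lambda I-\mathcal{B})^{-1}\|\to 0$ as $\lambda\to\infty$ read off from \eqref{exact expression}. The only difference is presentational: the paper feeds the smallness estimate into \cite[Theorem 3.4]{Thieme2009} via $r(\mathcal{F}(\lambda I-\mathcal{B})^{-1})\le\|\mathcal{F}\|\,\|(\lambda I-\mathcal{B})^{-1}\|<1$, whereas you expand $(\lambda I-\mathcal{L})^{-1}$ in a Neumann series and read off positivity term by term, which is essentially the standard proof of that abstract criterion and makes the lemma self-contained; the one small point worth stating explicitly is the rearrangement $\bigl(\sum_{n\ge0}[(\lambda I-\mathcal{B})^{-1}\mathcal{F}]^n\bigr)(\lambda I-\mathcal{B})^{-1}=(\lambda I-\mathcal{B})^{-1}\sum_{n\ge0}[\mathcal{F}(\lambda I-\mathcal{B})^{-1}]^n$, which shows the norm-convergent sum indeed has range in $D(\mathcal{L})$.
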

\begin{proof}
Note that $\mathcal{L}=\mathcal{B}+\mathcal{F}$ with $\mathcal{B}$ being resolvent-positive, $s(\mathcal{B})<0$, and $\mathcal{F}$ being positive.  By \cite[Theorem 3.4]{Thieme2009}, $\mathcal{L}$ is resolvent-positive if and only if the spectral radius, $r(\mathcal{F}(\lambda I-\mathcal{B})^{-1})$, of the operator $\mathcal{F}(\lambda I-\mathcal{B})^{-1}$ satisfies $r(\mathcal{F}(\lambda I-\mathcal{B})^{-1})<1$
for some $\lambda>s(\mathcal{B})$.

By \eqref{exact expression}, one can see that
\begin{equation*}
\begin{aligned}
& \left\|(\lambda I-\mathcal{B})^{-1}\begin{pmatrix}
\varpi\\ \begin{pmatrix}
y\\ \vartheta
\end{pmatrix}
\end{pmatrix}\right\|_{\mathbb{X}}\leq\|u\|_{\infty}+\int_{0}^{a_{\max}}\|w(\cdot,a)\|_{\infty}da\rightarrow 0, ~\text{as}~\lambda\rightarrow \infty.
\end{aligned}
\end{equation*}
Since $\mathcal{F}$ is a bounded linear operator, we have
$$r(\mathcal{F}(\lambda I-\mathcal{B})^{-1})\leq \|\mathcal{F}\|_{\mathfrak{L}(\mathbb{X})}\|(\lambda I-\mathcal{B})^{-1}\|_{\mathfrak{L}(\mathbb{X})}<1,$$
for   sufficiently large $\lambda$. Therefore,  operator $\mathcal{L}$ is resolvent-positive.
\end{proof}

Following \cite{Thieme2009}, the  \emph{net reproductive rate} $\mathcal{R}_{0}$ of system \eqref{model} is defined as the spectral radius of $-\mathcal{F}\mathcal{B}^{-1}$, i.e.,
\begin{equation}\label{basic reproduction number}
\begin{aligned}
& \mathcal{R}_{0}=r(-\mathcal{F}\mathcal{B}^{-1}).
\end{aligned}
\end{equation}

By \cite[Theorem 3.5]{Thieme2009}, using Lemmas \ref{lemmaB} and \ref{lemmaL}, we have the following conclusion.
\begin{proposition}\label{prop1}
 Let $\mathcal{R}_{0}$ be defined by \eqref{basic reproduction number}. Then $s(\mathcal{L})$ has the same sign as $\mathcal{R}_{0}-1.$
\end{proposition}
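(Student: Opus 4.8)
The plan is to invoke \cite[Theorem 3.5]{Thieme2009}, which states the required sign correspondence: if $\mathcal{B}$ is resolvent-positive with $s(\mathcal{B})<0$, $\mathcal{F}$ is positive, and $\mathcal{B}+\mathcal{F}$ is resolvent-positive on an ordered Banach space with closed generating cone, then exactly one of the following holds: $s(\mathcal{B}+\mathcal{F})<0$ and $r(-\mathcal{F}\mathcal{B}^{-1})<1$; $s(\mathcal{B}+\mathcal{F})=0$ and $r(-\mathcal{F}\mathcal{B}^{-1})=1$; $s(\mathcal{B}+\mathcal{F})>0$ and $r(-\mathcal{F}\mathcal{B}^{-1})>1$. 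So the task reduces to checking that all hypotheses of that theorem are in force for our operators $\mathcal{B}$, $\mathcal{F}$, $\mathcal{L}=\mathcal{B}+\mathcal{F}$ acting on $\mathbb{X}=Y\times X$, which is an ordered Banach space under the natural pointwise/almost-everywhere order with closed generating cone $\mathbb{X}_+$.

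First I would record that hypothesis on $\mathcal{B}$ is exactly Lemma \ref{lemmaB}: $\mathcal{B}$ is resolvent-positive and $s(\mathcal{B})<0$. Since $s(\mathcal{B})<0$, we have $0\in\rho(\mathcal{B})$, so $\mathcal{B}$ is boundedly invertible; taking $\lambda=0$ in the resolvent positivity gives that $-\mathcal{B}^{-1}=(0\cdot I-\mathcal{B})^{-1}$ is a positive bounded operator, so $-\mathcal{F}\mathcal{B}^{-1}$ is a well-defined positive bounded operator and $\mathcal{R}_0=r(-\mathcal{F}\mathcal{B}^{-1})$ is meaningful. Next, $\mathcal{F}$ is positive: from its defining formula and the hypothesis $\beta(\cdot,a,0)\in Y_+$ for a.e.\ $a$, one sees $\mathcal{F}$ maps $\mathbb{X}_+$ into $\mathbb{X}_+$. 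Finally, the resolvent positivity of $\mathcal{L}=\mathcal{B}+\mathcal{F}$ is precisely Lemma \ref{lemmaL}. Hence all three hypotheses of \cite[Theorem 3.5]{Thieme2009} are verified, and the theorem immediately gives that $s(\mathcal{L})$ has the same sign as $\mathcal{R}_0-1$, completing the proof.

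There is no deep difficulty here; the substantive work has already been done in Lemmas \ref{lemmaB} and \ref{lemmaL}. The one point requiring care is simply to confirm that the structural framework of the cited theorem is matched literally — namely the ordered-Banach-space setup and, crucially, the resolvent positivity of \emph{both} $\mathcal{B}$ and $\mathcal{L}$ rather than of $\mathcal{B}$ alone. The latter is exactly the nontrivial ingredient, and it was obtained in Lemma \ref{lemmaL} through Thieme's perturbation criterion $r(\mathcal{F}(\lambda I-\mathcal{B})^{-1})<1$ for all sufficiently large $\lambda$, so nothing further is needed.
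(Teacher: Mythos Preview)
Your proposal is correct and takes essentially the same approach as the paper: the paper's proof is a one-line appeal to \cite[Theorem 3.5]{Thieme2009} using Lemmas \ref{lemmaB} and \ref{lemmaL}, and your write-up simply spells out in detail that the hypotheses of that theorem are met.
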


We now compute $\mathcal{R}_0$ and show that it is a principal eigenvalue of some elliptic problem.
\begin{proposition}
  Let $\mathcal{R}_{0}$ be defined by \eqref{basic reproduction number}.  If  $\mathcal{R}_0>0$,  $\lambda=1/\mathcal{R}_0$ is an eigenvalue of the following eigenvalue problem that corresponds with a positive eigenfunction:
 \begin{equation}\label{Reig}
  \left\{
   \begin{array}{lll}
\lambda \phi \chi(x,0)e(x) \int_{0} ^{a_{\max}}\beta(x,a,0)e^{-\int_{0}^{a}\mu(x,l,0)dl}da= (m+s-d\Delta)\phi,&x\in\Omega,\\
\partial_\nu\phi=0, &x\in\partial\Omega.
   \end{array}
  \right.
  \end{equation}
\end{proposition}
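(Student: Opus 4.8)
The plan is to make the operator $-\mathcal{F}\mathcal{B}^{-1}$ explicit, observe that its range lies in the coordinate subspace $Y\times\{0_Y\}\times\{0_Z\}\subset\mathbb{X}$, reduce the computation of $\mathcal{R}_0=r(-\mathcal{F}\mathcal{B}^{-1})$ to the spectral radius of a \emph{compact} positive operator on $Y$, and then conclude with the Krein--Rutman theorem and the strong maximum principle. By Lemma \ref{lemmaB}, $s(\mathcal{B})<0$, so $0\in\rho(\mathcal{B})$ and \eqref{exact expression} is valid at $\lambda=0$: it gives $-\mathcal{B}^{-1}(\varpi,(y,\vartheta))=(u,(0_Y,w))$ with $u=(m+e-d\Delta)^{-1}\varpi$ (the resolvent at $0$ of the elliptic operator in the $u$-equation of $\mathcal{B}$) and $w$ the corresponding age-integral expression. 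Since the last two components of $\mathcal{F}$ vanish identically, it follows that $-\mathcal{F}\mathcal{B}^{-1}=J\circ V$, where $J:Y\to\mathbb{X}$ is the embedding $J\varpi=(\varpi,(0_Y,0_Z))$ and $V:\mathbb{X}\to Y$ is the bounded map returning the first component of $-\mathcal{F}\mathcal{B}^{-1}$. A direct substitution yields $VJ\varpi=g\,(m+e-d\Delta)^{-1}\varpi$ for $\varpi\in Y$, where $g(x):=\chi(x,0)e(x)\int_0^{a_{\max}}\beta(x,a,0)e^{-\int_0^a\mu(x,l,0)\,dl}\,da\in Y_+$ is precisely the coefficient that appears in \eqref{Reig}.

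I would then use the elementary fact that $\sigma(JV)\setminus\{0\}=\sigma(VJ)\setminus\{0\}$ for bounded operators between Banach spaces. Since $\sigma(JV)$ is compact and $r(JV)=\mathcal{R}_0>0$ by hypothesis, the spectral radius is attained at some $\lambda_0\in\sigma(JV)\setminus\{0\}=\sigma(VJ)\setminus\{0\}$, and since every nonzero point of $\sigma(VJ)$ lies in $\sigma(JV)$, we get $r(\widehat T)=\mathcal{R}_0$ for $\widehat T:=VJ=g\,(m+e-d\Delta)^{-1}$. Now $(m+e-d\Delta)^{-1}$ is a compact positive operator on $Y$ (positivity from $m+e>0$ and the maximum principle; compactness from elliptic regularity and the compact embedding into $Y$), and multiplication by $g\ge0_Y$ is bounded and positive, so $\widehat T$ is a compact positive operator on $Y$ with $r(\widehat T)=\mathcal{R}_0>0$. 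The classical Krein--Rutman theorem then produces $\varpi\in Y_+\setminus\{0_Y\}$ with $\widehat T\varpi=\mathcal{R}_0\varpi$.

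To finish, put $\phi:=(m+e-d\Delta)^{-1}\varpi$. Since $\varpi\ge0_Y$ and $\varpi\not\equiv 0_Y$, the strong maximum principle together with the Hopf lemma for the Neumann condition gives $\phi(x)>0$ for every $x\in\bar\Omega$. Rewriting $\widehat T\varpi=\mathcal{R}_0\varpi$ as $g\phi=\mathcal{R}_0(m+e-d\Delta)\phi$ and dividing by $\mathcal{R}_0$ shows that $\phi$ solves $(m+e-d\Delta)\phi=\frac{1}{\mathcal{R}_0}\,g\phi$ under the homogeneous Neumann condition, i.e., $\phi$ is a positive eigenfunction of \eqref{Reig} for the eigenvalue $\lambda=1/\mathcal{R}_0$, which is the assertion.

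The main obstacle is the spectral reduction in the middle step: $-\mathcal{F}\mathcal{B}^{-1}$ on the full space $\mathbb{X}$ is not compact, because the $y$- and $\vartheta$-dependent parts of $V$ carry no smoothing, so Krein--Rutman cannot be applied to it directly. What rescues the argument is that $-\mathcal{F}\mathcal{B}^{-1}$ has range inside one ``coordinate'' factor of $\mathbb{X}$, which lets one transfer its nonzero spectrum to $VJ$ on $Y$, where the elliptic resolvent restores both compactness and positivity. A secondary point is that Krein--Rutman only guarantees a nonnegative eigenvector $\varpi$ of $\widehat T$; strict positivity of the eigenfunction $\phi$ of \eqref{Reig} is then automatic, since $\phi=(m+e-d\Delta)^{-1}\varpi$ is one elliptic solve smoother than $\varpi$.
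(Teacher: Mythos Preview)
Your argument is correct and follows essentially the same route as the paper: both compute $-\mathcal{F}\mathcal{B}^{-1}$ explicitly from \eqref{exact expression} at $\lambda=0$, reduce to the operator $R_1=\widehat T=g\,(m+e-d\Delta)^{-1}$ on $Y$, and then apply the Krein--Rutman theorem to this compact positive operator before passing to $\phi=(m+e-d\Delta)^{-1}\varpi$. The only cosmetic difference is in the spectral reduction step: the paper simply writes $-\mathcal{F}\mathcal{B}^{-1}$ as a block operator with first row $(R_1,R_2,R_3)$ and zero remaining rows and reads off $r(-\mathcal{F}\mathcal{B}^{-1})=r(R_1)$, whereas you phrase the same fact via the identity $\sigma(JV)\setminus\{0\}=\sigma(VJ)\setminus\{0\}$---your version is arguably more explicit about why the reduction is valid.
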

\begin{proof}
Let $(\varpi, (y,\vartheta)) \in \mathbb{X}$. Then we have $\mathcal{B}^{-1}(\varpi, (y,\vartheta)) =(u, (0_Y, w)) \in \mathbb{X}_0$, where $u$ and $w$ are given by \eqref{exact expression} with $\lambda=0$. It follows that
\begin{eqnarray}\label{explicit formula of BRN} \nonumber
-\mathcal{F}\mathcal{B}^{-1}
\begin{pmatrix}
\varpi\\
y\\
\vartheta
\end{pmatrix}
&=&\mathcal{F}\begin{pmatrix}
u\\
0_Y\\
w
\end{pmatrix}
=\begin{pmatrix}
\int_{0} ^{a_{\max}}\beta(\cdot,a,0) w(\cdot,a)da \\
0_Y\\
0_Z
\end{pmatrix}\\
&=&\begin{pmatrix}
R_{1}\varpi+R_{2}y+R_{3}\vartheta \\
0\\0
\end{pmatrix}
=\begin{pmatrix}
R_{1} & R_{2} & R_{3}\\ 0&0&0 \\ 0&0&0
\end{pmatrix}
\begin{pmatrix}
\varpi \\ y\\ \vartheta
\end{pmatrix}, \nonumber
\end{eqnarray}
 where
 \begin{equation}\label{R1}
\begin{aligned}
 R_{1}\varpi&=\chi(\cdot,0)e\int_{0} ^{a_{\max}}\beta(\cdot,a,0)e^{-\int_{0}^{a}\mu(\cdot,l,0)dl}da(m+s-d\Delta)^{-1}\varpi, \\
R_{2}y&=\int_{0} ^{a_{\max}}\beta(\cdot,a,0)e^{-\int_{0}^{a}\mu(\cdot,l,0)dl}y da, \\
R_{3}\vartheta &=\int_{0} ^{a_{\max}}\beta(\cdot,a,0)\int_{0}^{a}e^{-\int_{s}^{a}\mu(\cdot,l,0)dl}\vartheta(\cdot,s)dsda.
\end{aligned}
\end{equation}
 Thus, the net reproductive rate is
$$
\mathcal{R}_{0}=r(-\mathcal{F}\mathcal{B}^{-1})=r(R_{1}).
$$
Since the operator $(m+s-d\Delta)^{-1}$ is compact and positive (if $h\ge(\not\equiv) 0_Y\in Y$, then  $(m+s-d\Delta)^{-1}h\gg 0_Y$), $R_1$ is compact and positive. By the Krein-Rutman theorem, $\mathcal{R}_0=r(R_1)$ is  an eigenvalue of  $R_1$ that corresponds with a nonnegative eigenfunction $v\in Y$. So $v$ satisfies $R_1 v =\mathcal{R}_0v$. Denote $\phi=(m+s-d\Delta)^{-1}v$. Then $\phi$ is positive and an eigenfunction of \eqref{Reig} corresponding with eigenvalue $\lambda=1/\mathcal{R}_0$
if $\mathcal{R}_0>0$.
\end{proof}

\begin{remark}
    We note that $\mathcal{R}_0$ is defined to be the spectral radius of $R_1$ in \cite{Deng2019}, where $R_1$ is given in \eqref{R1}. Thus, the two definitions of $\mathcal{R}_0$  in  \cite{Deng2019} and the current paper are equivalent.
\end{remark}


\subsection{Growth bound}

In this subsection, we consider the exponential growth bound of the linearized system \eqref{linearized model} and study its relation  with the reproduction rate $\mathcal{R}_0$ and the eigenvalues of its generator. We write \eqref{linearized model} as:
\begin{equation}
  \left\{
   \begin{aligned}
&\  \frac{d\bm x}{dt}=\mathcal{L}\bm x, \quad \forall t\geq 0, &\\
&\  \bm x(0)\in \overline{D(\mathcal{L})}=\mathbb{X}_0,
   \end{aligned}
  \right.
  \end{equation}
where $\mathcal{L}=A+D$ and
$D: \mathbb{X}_{0}\rightarrow \mathbb{X}$ denotes the derivative of $F$ at $E_{0}$, i.e.,
\begin{equation*}
D\begin{gathered}\begin{pmatrix}
u\\ \begin{pmatrix}
0_Y\\ w
\end{pmatrix}
\end{pmatrix}
=\begin{pmatrix}
\int_{0} ^{a_{\max}}\beta(\cdot, a,0) w(\cdot, a)da-(m+e)u\\ \begin{pmatrix}
\chi(\cdot, 0)e u\\
0_Z
\end{pmatrix}
\end{pmatrix},
\quad (u, (0_Y, w)) \in X_0.
\end{gathered}
\end{equation*}
Here, $A$ is defined by \eqref{A} with $\mu_0=\mu(\cdot, \cdot, 0)$.
By Lemma \ref{H-Y}, we know that $A: D(A)\subset \mathbb{X}\to \mathbb{X}$ is a Hille-Yosida operator. Since $\mathcal{L}$ is a bounded perturbation of $A$, it is also a Hille-Yosida operator (\cite[Theorem 3.5.5]{arendtvector}). Hence, $\mathcal{L}_0$, the part of $\mathcal{L}$ on $\mathbb{X}_0$, is the infinitesimal generator of a strongly continuous semigroup $\{T_{\mathcal{L}_0}(t)\}_{t\geq 0}$ on $\overline{D(\mathcal{L}_0)}=\mathbb{X}_0$.
The \emph{exponential growth bound} of  $\{T_{\mathcal{L}_0}(t)\}_{t\geq 0}$ is defined as
$$
\omega=\omega(\mathcal{L}_0):=\lim_{t\rightarrow \infty}\frac{\ln (\|T_{\mathcal{L}_0}(t)\|)}{t}.
$$
 Then, for any $\omega'>\omega$, there is a positive constant $C$ such that  $\|T_{\mathcal{L}_0}(t)\|\leq C e^{\omega' t}$ for all $t\ge 0$.
 It is well-known (see \cite{Engel2000}) that
\begin{equation}\label{L0}
 \omega(\mathcal{L}_0)=\max\{s(\mathcal{L}_0),\omega_{ess}(\mathcal{L}_0)\},
 \end{equation}
 where $s(\mathcal{L}_0)$ is the spectral bound of $\mathcal{L}_0$ and  $\omega_{ess}(\mathcal{L}_0)$ is the essential growth bound of $\mathcal{L}$ defined by
 $$
 \omega_{ess}(\mathcal{L}_0):=\lim_{t\rightarrow \infty}\frac{\ln(\alpha(T_{\mathcal{L}_0}(t)))}{t}.
 $$
Here $\alpha$ is the  measure of non-compactness, that is, for any bounded linear operator $\mathcal{H}$ on $\mathbb{X}_0$,
$$\alpha(\mathcal{H}):=\inf_{S\in \mathcal{S}}\|\mathcal{H}-\mathcal{S}\|,$$
where $\mathcal{S}$ is the subset of compact linear operators on $\mathbb{X}_0$.

Our next result shows that $\omega_{ess}(\mathcal{L}_0)$ is negative.
\begin{lemma}\label{lemma_w}
    If $a_{\max}=\infty$, then $\omega_{ess}(\mathcal{L}_0)\le -\underline\mu$; if $a_{\max}<\infty$, then $\omega_{ess}(\mathcal{L}_0)= -\infty$.
\end{lemma}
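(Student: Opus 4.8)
The strategy is to decompose the semigroup $\{T_{\mathcal{L}_0}(t)\}_{t\ge 0}$ as $T_{\mathcal{L}_0}(t) = T_{A_0}(t) + R(t)$, where $T_{A_0}(t)$ is the strongly continuous semigroup generated by $A_0$ (the part of $A$ in $\mathbb{X}_0$, described explicitly in Section 2.1) and $R(t)$ is the remainder coming from the bounded perturbation $D$. Since $\mathcal{L} = A + D$ with $D$ bounded and $A$ a Hille-Yosida operator, the part $\mathcal{L}_0$ of $\mathcal{L}$ in $\mathbb{X}_0$ generates $\{T_{\mathcal{L}_0}(t)\}_{t\ge 0}$, and the variation-of-constants formula for non-densely defined perturbations (as in \cite{Ruan2018}) gives
$$
T_{\mathcal{L}_0}(t)\bm x_0 = T_{A_0}(t)\bm x_0 + \lim_{\lambda\to\infty}\int_0^t T_{A_0}(t-s)\,\lambda(\lambda I - A)^{-1} D\, T_{\mathcal{L}_0}(s)\bm x_0\, ds.
$$
The first step is to estimate the norm of $T_{A_0}(t)$ directly from its explicit formula: the $u$-component is $T_{A_1}(t)u_0$, which decays like $e^{-\underline{(m+e)}\, t}$ (or at worst is bounded, depending on the sign conventions), while the $w$-component is $e^{-\int_{a-t}^a \mu_0(l)\,dl} w_0(a-t)$ for $a\ge t$ and $0_Y$ for $a<t$, whose $Z$-norm is bounded by $e^{-\underline\mu t}\|w_0\|_Z$. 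Hence $\|T_{A_0}(t)\|_{\mathfrak{L}(\mathbb{X}_0)}\le C e^{-\underline\mu t}$ when $a_{\max}=\infty$, and $T_{A_0}(t) = 0$ for $t > a_{\max}$ in the $w$-component and exponentially small in $u$ when $a_{\max}<\infty$. In either case $\omega(A_0)\le -\underline\mu$ (resp.\ $\le$ any negative number when $a_{\max}<\infty$), so in particular $\alpha(T_{A_0}(t)) \le \|T_{A_0}(t)\| \le Ce^{-\underline\mu t}$, giving $\omega_{ess}(A_0)\le -\underline\mu$.

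The second, and main, step is to show that the perturbation term $R(t) := T_{\mathcal{L}_0}(t) - T_{A_0}(t)$ is compact for each $t>0$ (or at least for $t$ sufficiently large), so that it contributes nothing to the essential growth bound and $\omega_{ess}(\mathcal{L}_0) = \omega_{ess}(A_0 + \text{compact part})\le \omega_{ess}(A_0)$. Here one invokes the standard fact that if $S(t)$ is a strongly continuous semigroup and $R(t)$ is a family of compact operators with $\|R(t)\|$ locally bounded, then $\alpha(S(t)+R(t)) = \alpha(S(t))$, hence $\omega_{ess}$ is unchanged. To establish compactness of $R(t)$, I would iterate the variation-of-constants formula once (a Dyson–Phillips type expansion): the leading term $\int_0^t T_{A_0}(t-s)\, D\, T_{A_0}(s)\bm x_0\,ds$ already gains compactness because $D$ maps into $Y\times\{0_Y\}\times\{0_Z\}$ essentially (its range is determined by $\int_0^{a_{\max}}\beta w\,da - (m+e)u$ in the first slot and $\chi(\cdot,0)eu$ as the boundary datum $w(\cdot,0)$ in the second), and the operator $T_{A_0}(t-s)$ then smooths this input via the parabolic semigroup $T_{A_1}$ in the $u$-component (which is compact for $t-s>0$ by parabolic regularity) and propagates the boundary datum $\chi(\cdot,0)eu(s)$ along characteristics into a $w$-profile that is continuous in $a$ — together with the time-integration this yields an input to Kolmogorov's criterion (Lemma \ref{compactness criterion}) exactly as in the proof of Theorem \ref{asymptotically smooth}. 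The higher-order terms in the expansion are then even more regular, and the series converges in operator norm, so $R(t)$ is a norm limit of compact operators, hence compact.

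Combining the two steps: $\omega_{ess}(\mathcal{L}_0) = \omega_{ess}(A_0 + \text{compact remainder}) \le \omega_{ess}(A_0) \le -\underline\mu$ when $a_{\max}=\infty$; and when $a_{\max}<\infty$, $T_{A_0}(t)$ has finite-rank-limit behavior (the $w$-part vanishes identically for $t>a_{\max}$ and the $u$-part is compact for $t>0$), so $\alpha(T_{A_0}(t)) = 0$ for $t>a_{\max}$, whence $\omega_{ess}(A_0) = -\infty$ and therefore $\omega_{ess}(\mathcal{L}_0) = -\infty$. I expect the main obstacle to be the compactness argument for $R(t)$: one must be careful that the $w$-component of $R(t)\bm x_0$ genuinely lies in a compact subset of $Z$ uniformly over bounded $\bm x_0$, and this requires re-running the equicontinuity-in-age estimate (condition (iii) of Lemma \ref{compactness criterion}) for the linearized problem, controlling the dependence of $\chi(\cdot,0)$, $\mu(\cdot,\cdot,0)$, $\beta(\cdot,\cdot,0)$ and using parabolic smoothing of the $u$-input — essentially a linearized, and somewhat simpler, version of the work already done in Section 4. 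An alternative to the explicit Dyson–Phillips expansion, which I would mention, is to observe that $\mathcal{L}_0 = A_0 + D|_{\mathbb{X}_0}$ with $D$ bounded, and to cite a perturbation result (e.g.\ \cite{magal2004eventual} or the classical result that bounded perturbation does not change $\omega_{ess}$ provided the perturbed semigroup minus the unperturbed one is eventually compact) — but since the excerpt notes that the Magal–Thieme theory "does not directly apply", the hands-on decomposition via Kolmogorov's criterion is the safer route.
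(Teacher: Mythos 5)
Your route is viable but genuinely different from the paper's. The paper decomposes $\mathcal{L}=\mathcal{E}+\mathcal{C}$, where $\mathcal{E}$ keeps the diffusion, the loss term $-(m+e)u$, the birth integral $\int_0^{a_{\max}}\beta(\cdot,a,0)w\,da$ \emph{and} the transport--mortality part with homogeneous boundary recruitment $w(\cdot,0)=0_Y$, while $\mathcal{C}$ consists only of the boundary-recruitment map $u\mapsto\chi(\cdot,0)eu$. For $T_{\mathcal{E}_0}(t)$ the $w$-equation decouples and its $Z$-norm is bounded by $e^{-\underline\mu t}\|w_0\|_Z$ (and vanishes for $t>a_{\max}$ when $a_{\max}<\infty$), while the $u$-component is precompact by parabolic estimates; this gives $\alpha(T_{\mathcal{E}_0}(t)B)\le re^{-\underline\mu t}$ directly, hence $\omega_{ess}(\mathcal{E}_0)\le-\underline\mu$ (resp.\ $=-\infty$). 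Then, because $\mathcal{C}$ only sees the precompact $u$-component, $\mathcal{C}T_{\mathcal{E}_0}(t)$ is compact, and the perturbation theorem \cite[Theorem 1.2]{Ducrot2008} immediately yields $\omega_{ess}(\mathcal{L}_0)\le\omega_{ess}(\mathcal{E}_0)$ --- no compactness of the full remainder $T_{\mathcal{L}_0}(t)-T_{\mathcal{E}_0}(t)$ is ever needed. You instead take the crude splitting $\mathcal{L}=A+D$ with everything (birth integral, $-(m+e)u$, boundary recruitment) in the bounded perturbation $D$, and then must prove compactness of the whole remainder $R(t)$ via a Dyson--Phillips expansion together with a Kolmogorov-criterion argument in the age variable. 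That can be carried out (the equicontinuity-in-age estimate is essentially a linear version of the Section 4 computation, using Lemma \ref{lemma_uc} and the smallness in $L^1$ of the age interval near $a=t$ where the parabolic smoothing degenerates), but it is substantially heavier than the paper's two-line appeal to \cite{Ducrot2008}; note also that the remark in the introduction about Magal--Thieme not applying concerns the nonlinear semiflow, so avoiding the linear perturbation theorem here buys you nothing.

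One intermediate claim in your first step is false as written: $\|T_{A_0}(t)\|\le Ce^{-\underline\mu t}$ cannot hold, since the $u$-component of $T_{A_0}(t)$ is the Neumann heat semigroup $T_{A_1}(t)$, which preserves constants, so $\|T_{A_0}(t)\|\ge 1$ for all $t$ (recall $A_1u=d\Delta u$; the absorption $-(m+e)u$ sits in $D$, not in $A_1$). The conclusion you actually need, $\omega_{ess}(A_0)\le-\underline\mu$, is still true, but it must be obtained through the measure of noncompactness rather than the operator norm: $T_{A_1}(t)$ is compact for $t>0$, so the $u$-part contributes nothing to $\alpha$, and the $w$-part has norm at most $e^{-\underline\mu t}$, giving $\alpha(T_{A_0}(t))\le e^{-\underline\mu t}$ (and $=0$ for $t>a_{\max}$ when $a_{\max}<\infty$). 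With that repair, and with the compactness of $R(t)$ actually carried out rather than sketched, your argument closes.
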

\begin{proof}
We write $\mathcal{L}=\mathcal{E}+\mathcal{C}$, where
\begin{equation*}
\begin{gathered}\mathcal{E}\bm x
=\begin{pmatrix}
d\Delta u-(m+e)u+\int_{0} ^{a_{\max}}\beta(\cdot,a,0) w(\cdot, a) da \\
-w(\cdot,0)\\
-w_{a}-\mu(\cdot,\cdot,0) w
\end{pmatrix}, \quad\forall \bm x=(u, (0_Y, w)) \in D(\mathcal{E}),
\end{gathered}
\end{equation*}
and
\begin{equation*}
\begin{gathered}\mathcal{C}\bm x
=\begin{pmatrix}
0_Y \\
\chi(\cdot,0)e u\\
0_Z
\end{pmatrix}, \quad\forall \bm x=(u, (0_Y, w)) \in\mathbb{X}_0.
\end{gathered}
\end{equation*}
Then, $\mathcal{E}$ is a bounded linear perturbation of $A$, so it is a  Hille-Yosida operator. So, $\mathcal{E}_0$, the part of $\mathcal{E}$ on $\mathbb{X}_0$, generates a strongly continuous semigroup $\{T_{\mathcal{E}_0}(t)\}_{t\ge 0}$ on $\overline{D(\mathcal{E})}=\mathbb{X}_0$.

Let $B=\{\bm x_0\in \mathbb{X}_0: \ \|\bm x_0\|_{\mathbb{X}}\le r \}$ for some $r>0$.
Let $\bm x_0=(u_0, (0_Y, w_0)) \in B$ and $(u(\cdot, t), (0_Y, w(\cdot, \cdot, t)))=T_{\mathcal{E}_0}(t)\bm x_0$, i.e., the solution of
  \begin{equation}\label{Esol}
  \left\{
   \begin{array}{lll}
\partial_{t} u=d\Delta u +\displaystyle\int_{0} ^{a_{\max}}\beta(x,a,0)w(x,a,t)da-(m(x)+e(x))u, \quad &x\in\Omega, t>0, \\
\partial_{t}w+\partial_{a}w=-\mu(x,a, 0)w ,\quad &x\in\Omega, t>0,a\in (0,a_{\max}), \\
\partial_\nu u=0,~\quad &x\in \partial \Omega, t>0, \\
 w(x,0,t)=0_Y,  \quad &x\in\Omega, t>0, \\
 u(x,0)=u_{0}(x), \quad &x\in\Omega, \\
 w(x,a,0)=w_{0}(x,a),\quad  &x\in\Omega, a\in (0, a_{\max}).
   \end{array}
  \right.
  \end{equation}
It is easy to see that the equations of $w$ decouples from system \eqref{Esol}. So we can solve for $w$ to obtain
\begin{equation}\label{Esol1}
    w(x, a, t)=
      \begin{cases}
    w_0(x, a-t) e^{-\int_{a-t}^a \mu(x, s, 0)ds}, \ &\text{if}\ a> t, \\
0_Y, \ &\text{if}\ a<t.
\end{cases}
\end{equation}
Suppose $a_{\max}=\infty$. Then, for all $t\ge 0$,
\begin{eqnarray}
\|w(\cdot, \cdot, t)\|_Z&=&\int_0^\infty \|w(\cdot, a, t)\|_\infty da \nonumber\\
&=&\int_t^\infty \|w_0(\cdot, a-t) e^{-\int_{a-t}^a \mu(\cdot, s, 0)ds} \|_\infty da \nonumber\\
&\le&e^{-\underline\mu t} \|w_0\|_Z\le re^{-\underline\mu t}. \nonumber
\end{eqnarray}
Notice that
$$
\| \int_{0} ^{a_{\max}}\beta(\cdot,a,0)w(\cdot,a,t)da \|_\infty\le \bar\beta \|w(\cdot, \cdot, t)\|_Z\le \bar\beta r e^{-\underline\mu t}.
$$
By the well-known parabolic estimates and embedding theorems, the set $\{u(\cdot, t): \ \bm x_0\in B\}$ is precompact in $Y$. Therefore, we have
\begin{eqnarray*}
\alpha(T_{\mathcal{E}_0}(t)B)&\leq& \alpha(\{u(\cdot,\cdot, t): \ \bm x_0\in B\})+\alpha(\{w(\cdot,\cdot, t): \ \bm x_0\in B\}\\
&=&0+\alpha(\{w(\cdot,\cdot, t): \ \bm x_0\in B\}\\
&\le & \|\{w(\cdot, t): \ \bm x_0\in B\}\|_Z\le re^{-\underline\mu t}.
\end{eqnarray*}
This implies that $\omega_{ess}(\mathcal{E}_0)\leq -\underline{\mu}$.

If $a_{\max}<\infty$, by \eqref{Esol1}, we have $\|w(\cdot, \cdot, t)\|_Z=0$ for all $t\ge a_{\max}$. Repeating the above computations, we will get $\alpha(T_{\mathcal{E}_0}(t)B)=0$ and $\omega_{ess}(\mathcal{E}_0)= -\infty$ (in this case, $T_{\mathcal{E}_0}(t)$ is compact for all $t> a_{\max}$).

Finally, by the definition of $\mathcal{C}$ and the compactness of $\{u(\cdot, t): \ \bm x_0\in B\}$, the operator $\mathcal{C} T_{\mathcal{E}_0}(t)$, $t>0$, is compact on $\mathbb{X}_0$.
 It follows from \cite[Theorem 1.2]{Ducrot2008} that
$$
\omega_{ess}(\mathcal{L}_{0})\le\omega_{ess}(\mathcal{E}_0)
\begin{cases}
\leq -\underline{\mu}<0, & \text{if}\ a_{\max}=\infty,\\
=-\infty, & \text{if}\ a_{\max}<\infty.
\end{cases}
$$
\end{proof}

It is clear that $\rho(\mathcal{L})\neq\emptyset$, so $\rho(\mathcal{L})=\rho(\mathcal{L}_0)$ (\cite[Lemma 2.2.10]{Ruan2018}) and $s(\mathcal{L})=s(\mathcal{L}_0)$.
The following is the main result in this section, which relates the reproductive rate $\mathcal{R}_0$, the growth bound $\omega(\mathcal{L}_0)$, and the spectral bound $s(\mathcal{L}_0)$.

\begin{theorem}\label{theorem_R}
    The following statements are valid:
\begin{enumerate}
    \item[{\rm (i)}] Suppose $a_{\max}<\infty$. Then, $\mathcal{R}_0-1$ has the same sign as $\omega(\mathcal{L}_0)$. Moreover,  $\omega(\mathcal{L}_0)=s(\mathcal{L}_0)$ is the principal eigenvalue of $\mathcal{L}_0$ in the sense that $\lambda_0:=\omega(\mathcal{L}_0)=s(\mathcal{L}_0)$ is a simple eigenvalue of the following problem
      \begin{equation}\label{eigenvalue problem:2}
  \left\{
   \begin{array}{lll}
\lambda\phi=d\Delta \phi +\displaystyle\int_{0} ^{a_{\max}}\beta(x,a,0)\varphi(x,a)da-m(x)\phi-e(x)\phi, \quad &x\in\Omega, \\
\lambda\varphi+ \partial_{a}\varphi=-\mu(x,a,0)\varphi, \quad &x\in\Omega, a\in (0,a_{\max}), \\
\partial_\nu \phi=0,~\quad &x\in \partial \Omega,  \\
 \varphi(x,0)= \chi(x,0)e(x)\phi,  \quad &x\in\Omega, \\
   \end{array}
  \right.
  \end{equation}
  which is the unique eigenvalue that corresponds with a positive eigenfunction, and any other spectrum value $\lambda$ of $\mathcal{L}_0$ satisfies $Re(\lambda)<\lambda_0$.

    \item[{\rm (ii)}] Suppose $a_{\max}=\infty$. If $\mathcal{R}_0<1$, then $\omega(\mathcal{L}_0)<0$; if $\mathcal{R}_0=1$, then $\omega(\mathcal{L}_0)=s(\mathcal{L}_0)=0$; if $\mathcal{R}_0>1$, then $\omega(\mathcal{L}_0)=s(\mathcal{L}_0)>0$. Moreover, if  $\mathcal{R}_0\ge1$, $\lambda_0:=\omega(\mathcal{L}_0)=s(\mathcal{L}_0)$ is the principal eigenvalue of $\mathcal{L}_0$ (in the sense as {\rm (i)}).
\end{enumerate}
\end{theorem}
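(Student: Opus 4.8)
\noindent\emph{Proof idea.}
The plan is to assemble facts already in place: the identity $s(\mathcal{L})=s(\mathcal{L}_0)$ (recorded just above) together with Proposition~\ref{prop1}, by which this common value is finite and has the same sign as $\mathcal{R}_0-1$; and Lemma~\ref{lemma_w}, which gives $\omega_{ess}(\mathcal{L}_0)\le-\underline\mu<0$ when $a_{\max}=\infty$ and $\omega_{ess}(\mathcal{L}_0)=-\infty$ when $a_{\max}<\infty$. Substituting these into the decomposition \eqref{L0}, $\omega(\mathcal{L}_0)=\max\{s(\mathcal{L}_0),\omega_{ess}(\mathcal{L}_0)\}$, I would argue by cases on the sign of $\mathcal{R}_0-1$. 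If $\mathcal{R}_0>1$, then $s(\mathcal{L}_0)>0>\omega_{ess}(\mathcal{L}_0)$, so $\omega(\mathcal{L}_0)=s(\mathcal{L}_0)>0$; if $\mathcal{R}_0=1$, then $s(\mathcal{L}_0)=0>\omega_{ess}(\mathcal{L}_0)$, so $\omega(\mathcal{L}_0)=s(\mathcal{L}_0)=0$; if $\mathcal{R}_0<1$, then $s(\mathcal{L}_0)<0$ and also $\omega_{ess}(\mathcal{L}_0)<0$, hence $\omega(\mathcal{L}_0)<0$ (with no claim on which term realizes the maximum). This establishes all the sign statements in (ii). For (i), since $\omega_{ess}(\mathcal{L}_0)=-\infty$ while $s(\mathcal{L}_0)$ is finite, the maximum in \eqref{L0} is realized by $s(\mathcal{L}_0)$, so $\omega(\mathcal{L}_0)=s(\mathcal{L}_0)=s(\mathcal{L})$, whose sign coincides with that of $\mathcal{R}_0-1$.

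Next I would establish the principal-eigenvalue properties of $\lambda_0:=s(\mathcal{L}_0)=\omega(\mathcal{L}_0)$ in the regimes where $\omega_{ess}(\mathcal{L}_0)<\lambda_0$, i.e. whenever $a_{\max}<\infty$, and when $a_{\max}=\infty$ with $\mathcal{R}_0\ge1$. Since $\mathcal{L}$ is resolvent-positive (Lemma~\ref{lemmaL}), $\rho(\mathcal{L}_0)\ne\emptyset$, and $\lambda_0$ is finite, the standard fact that the spectral bound of a resolvent-positive operator belongs to its spectrum (see \cite{Thieme2009}) gives $\lambda_0\in\sigma(\mathcal{L}_0)$; because $\lambda_0>\omega_{ess}(\mathcal{L}_0)$, it is an isolated eigenvalue of finite algebraic multiplicity, and the Perron--Frobenius theory of positive $C_0$-semigroups furnishes a nonnegative eigenvector $(\phi,(0_Y,\varphi))$. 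Writing the eigenvalue relation componentwise shows precisely that $(\phi,\varphi)$ solves \eqref{eigenvalue problem:2} with $\lambda=\lambda_0$. To upgrade ``nonnegative'' to simple, strictly positive ($\phi\gg0$ on $\bar\Omega$), strictly dominant, and unique among eigenvalues carrying a positive eigenfunction, I would verify that $\{T_{\mathcal{L}_0}(t)\}_{t\ge0}$ is irreducible: the strong maximum principle renders the $u$-component strictly positive on $\bar\Omega$ for $t>0$ once the datum is nonzero, the renewal condition $\varphi(\cdot,0)=\chi(\cdot,0)e\,\phi$ carries this positivity into $w$ at age $0$, transport along characteristics spreads it to all ages $a<t$, and the birth term $\int_0^{a_{\max}}\beta(\cdot,a,0)\varphi\,da$ (with $\beta\not\equiv0$) returns it to $u$. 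Irreducibility then licenses the Krein--Rutman theorem for irreducible positive semigroups, which yields exactly the four asserted properties. An equivalent, more hands-on route is to integrate the $\varphi$-equation along characteristics, reducing \eqref{eigenvalue problem:2} to the scalar characteristic equation stating that $0$ is the principal Neumann eigenvalue of $d\Delta-(m+e)+\chi(\cdot,0)e\int_0^{a_{\max}}\beta(\cdot,a,0)e^{-\lambda a-\int_0^a\mu(\cdot,l,0)dl}\,da-\lambda$; strict monotonicity of this elliptic operator in $\lambda$ pins $\lambda_0$ down uniquely and transfers simplicity from the elliptic principal eigenvalue, and the bound $|e^{-\lambda a}|=e^{-Re(\lambda)a}$ together with strong positivity of the elliptic resolvent yields strict dominance.

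The step I expect to be the main obstacle is the irreducibility of $\{T_{\mathcal{L}_0}(t)\}_{t\ge0}$ on the product space $\mathbb{X}_0=Y\times\{0_Y\}\times Z$: one must rule out nontrivial closed invariant ideals, which requires carefully tracking how positivity propagates through the three coupled mechanisms (spatial diffusion of $u$, the age boundary/renewal term, and the fecundity feedback) and uses only the mild nondegeneracy built into the standing hypotheses, namely $e,\chi(\cdot,0)>0$ on $\bar\Omega$ and $\int_0^{a_{\max}}\beta(x,a,0)\,da\not\equiv0$. If one follows the characteristic-equation route instead, the analogous delicate point is the strict dominance of $\lambda_0$ over complex spectral values on the line $Re(\lambda)=\lambda_0$, which again reduces to a strict-positivity comparison for the associated elliptic operators. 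The remaining pieces — the case analysis, the location of $\lambda_0$ in $\sigma(\mathcal{L}_0)$, and the unwinding of \eqref{eigenvalue problem:2} — are routine given Lemma~\ref{lemma_w}, Proposition~\ref{prop1}, and the cited results from \cite{Thieme2009,Engel2000,Ducrot2008}.
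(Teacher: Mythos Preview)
Your sign analysis via Proposition~\ref{prop1}, \eqref{L0}, and Lemma~\ref{lemma_w} is exactly what the paper does and is correct.

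The divergence is in the principal-eigenvalue argument. Your primary route, irreducibility of $\{T_{\mathcal{L}_0}(t)\}_{t\ge0}$, need not hold under the standing hypotheses: (A3) allows $\beta(\cdot,a,0)$ to vanish identically on a tail interval $[a_1,a_{\max})$, and in that case the closed ideal $\{(0,(0_Y,w)):w|_{[0,a_1]}=0\}$ is invariant --- a datum with $u_0=0$ and $w_0$ supported in $(a_1,a_{\max})$ never feeds back to $u$, and transport only moves $w$ to older ages. The nondegeneracy $\int_0^{a_{\max}}\beta(x,a,0)\,da\not\equiv0$ you list is not sufficient to rule this out. So Krein--Rutman via irreducibility is not available in general here.

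The paper instead follows your alternative characteristic-equation route, but resolves the two delicate points differently from what you sketch. For strict dominance it does not argue via $|e^{-\lambda a}|=e^{-Re(\lambda)\,a}$; rather it invokes a result of Webb (\cite[Proposition~2.5]{webb1987operator}) valid for positive $C_0$-semigroups with $\omega_{ess}<\omega$, which directly yields that $s(\mathcal{L}_0)$ is an eigenvalue with a positive eigenvector and is the unique peripheral spectral value. Geometric simplicity then comes, as you suggest, from reducing \eqref{eigenvalue problem:2} to the parametric elliptic family and using simplicity of the elliptic principal eigenvalue together with strict monotonicity of $k\mapsto\hat\lambda_0(k)$. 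Algebraic simplicity, however, is not transferred from the elliptic side; the paper shows $\ker(\lambda_0 I-\mathcal{L}_0)^2\subset\ker(\lambda_0 I-\mathcal{L}_0)$ by an explicit computation: solving the generalized-eigenvector system via characteristics, substituting into the $\phi$-equation, and testing against $\phi_0$ to force the inhomogeneity to vanish.
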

\begin{proof}
The relation of $\mathcal{R}_0$, $s(\mathcal{L})$, and $\omega(\mathcal{L}_0)$ follow from Proposition \ref{prop1}, \eqref{L0} and Lemma \ref{lemma_w}. It remains to show that $\lambda_0=\omega(\mathcal{L}_0)=s(\mathcal{L})$ is the principal eigenvalue of $\mathcal{L}_0$ when $a_{\max}<\infty$ or $a_{\max}=\infty$ and $\mathcal{R}_0\ge 1$.
By \cite[Corollary IV 2.11]{Engel2000} (also see \cite{Webb1985}), the set
$$
\{\lambda:\  \lambda\in \sigma(\mathcal{L}_0)\quad \text{and}\quad Re(\lambda)> \omega_{ess}(\mathcal{L}_0)\}
$$
is finite and composed of isolated eigenvalues of $\mathcal{L}_0$ with finite algebraic multiplicity. 

Suppose  that either $a_{\max}<\infty$ or $a_{\max}=\infty$ and $\mathcal{R}_0\ge 1$.
By Lemma \ref{lemmaL}, $\mathcal{L}_0$ is  resolvent positive, which means that the semigroup $\{T_{\mathcal{L}_0}(t)\}_{t\ge 0}$ on $\mathbb{X}_0$ generated by $\mathcal{L}_0$ is positive \cite[Theorem VI.1.8]{Engel2000}. By \cite[Proposition 2.5]{webb1987operator}, Lemma \ref{lemma_w} and \eqref{L0}, $\lambda_0=\omega(\mathcal{L}_0)=s(\mathcal{L})$ is an eigenvalue of the operator $\mathcal{L}_0$, $\lambda_0$ associates with a positive eigenfunction, and it is the unique peripheral eigenvalue in the sense that any other eigenvalue $\lambda\neq \lambda_0$ of $\mathcal{L}_0$ satisfies $Re(\lambda)<\lambda_0$.

We only need to show that eigenvalue $\lambda_0$ of $\mathcal{L}_0$ is simple and it is the unique eigenvalue that corresponds with a positive eigenfunction. It is easy to see that $(\lambda, \phi, \varphi)$ is an eigenpair of \eqref{eigenvalue problem:2} (i.e. eigenpair of $\mathcal{L}_0$) if and only if $(\lambda, \phi)$ is an eigenpair of the following nonlinear eigenvalue problem (if $a_{\max}=\infty$, assume $\lambda>-\underline\mu$):
   \begin{equation}\label{eigenvalue problem:3}
  \left\{
   \begin{array}{lll}
\lambda \phi=d\Delta \phi + \chi(x,0)e(x)\displaystyle\int_{0} ^{a_{\max}}\beta(x,a,0)e^{-\int_{0}^{a}\mu(x,l,0)dl}e^{-\lambda a}da\phi-(m(x)+e(x))\phi, \quad &x\in\Omega, \\
\partial_\nu \phi=0,~\quad &x\in \partial \Omega.  \\
   \end{array}
  \right.
  \end{equation}
Indeed, if $(\lambda, \phi, \varphi)$ is an eigenpair of
 \eqref{eigenvalue problem:2}, by the equation of $w$,
\begin{equation}\label{def_phi}
\varphi(x,a)=\chi(x,0) e(x) e^{-\int_{0}^{a}\mu(x,l,0)dl}e^{-\lambda a}\phi(x), \quad a\in (0, a_{\max}), x\in\Omega.
\end{equation}
Substituting it into the first equation of \eqref{eigenvalue problem:2}, we can see that $(\lambda, \phi)$ is an eigenpair of
\eqref{eigenvalue problem:3}. On the contrary, suppose that $(\lambda, \varphi)$ is an eigenpair of \eqref{eigenvalue problem:3}. Defining $\varphi$ by \eqref{def_phi}, it is easy to check that $(\lambda, \phi, \varphi)$ is an eigenpair of \eqref{eigenvalue problem:2}. Note that if $a_{\max}=\infty$,  the assumption $\lambda>-\underline\mu$ guarantees that $\varphi\in Z$ and the integral in the first equation of \eqref{eigenvalue problem:3} converges.

Now, we know that $\lambda_0$ is an eigenvalue of \eqref{eigenvalue problem:3}. To see the properties of $\lambda_0$, for each $k\in\mathbb{R}$ (if $a_{\max}=\infty$, assume $k>-\underline\mu$), consider the eigenvalue problem:
   \begin{equation}\label{eigenvalue problem:k}
  \left\{
   \begin{array}{lll}
\lambda \phi=d\Delta \phi + \chi(x,0)e(x)\displaystyle\int_{0} ^{a_{\max}}\beta(x,a,0)e^{-\int_{0}^{a}\mu(x,l,0)dl}e^{-k a}da\phi-(m(x)+e(x))\phi, \quad &x\in\Omega, \\
\partial_\nu \phi=0,~\quad &x\in \partial \Omega.  \\
   \end{array}
  \right.
  \end{equation}
It is well-known that \eqref{eigenvalue problem:k} has a principal eigenvalue
$\hat\lambda_0(k)$ (it is a simple and real eigenvalue, and it is also the unique eigenvalue that corresponds with a positive eigenvector).
Since $\lambda_0$ associates with a positive eigenfunction, $k=\lambda_0$ is a root of
the following equation
\begin{equation}\label{eqk}
\hat\lambda_0(k)=k.
\end{equation}
Actually, since $e^{-ka}$ is decreasing in $k$, by the variational characterization of the eigenvalue $\hat\lambda_{0}(k)$,   $\hat\lambda_{0}(k)$ is decreasing in $k$.
If $a_{\max}<\infty$, \eqref{eqk} has a unique root in $\mathbb{R}$. If   $a_{\max}=\infty$ and $\mathcal{R}_0\ge 1$, $1/\mathcal{R}_0\le 1$ is the principal  eigenvalue of \eqref{Reig}. This implies $\hat\lambda_0(0)\ge 0$. Since  $\hat\lambda_0(k)$ is well-defined and decreasing in $k\in [0, \infty)$, \eqref{eqk} has a unique root in $[0, \infty)$.

Since the eigenvalue $\hat\lambda_{0}(k)$ of \eqref{eigenvalue problem:k} is simple, the geometric multiplicity  of $\lambda_0$ for \eqref{eigenvalue problem:3} (hence \eqref{eigenvalue problem:2}) is 1, i.e. the dimension of the null space $ker(\lambda_0 I-\mathcal{L}_0)$ is 1. Next, we show that the algebraic multiplicity of $\lambda_0$ is 1. It suffices to show that  $ker(\lambda_0 I-\mathcal{L}_0)^2\subset ker(\lambda_0 I-\mathcal{L}_0)$. To this end, let $\bm\psi=(\phi, (0_Y, \varphi)) $ such that
$$
(\lambda_0 I-\mathcal{L}_0)\bm\psi=\bm\psi_0=(\phi_0, 0_Y, \varphi_0) \in ker(\lambda_0I-\mathcal{L}_0).
$$
 The above equation can be written as
\begin{equation}\label{eigS}
  \left\{
   \begin{array}{lll}
\lambda_0\phi-d\Delta \phi -\displaystyle\int_{0} ^{a_{\max}}\beta(x,a,0)\varphi(x,a)da+(m(x)+e(x))\phi=\phi_0, \quad &x\in\Omega, \\
\lambda_0\varphi+ \partial_{a}\varphi+\mu(x,a,0)\varphi=\varphi_0, \quad &x\in\Omega, a\in (0,a_{\max}), \\
\partial_\nu \phi=0,~\quad &x\in \partial \Omega,  \\
 \varphi(x,0)= \chi(x,0)e(x)\phi,  \quad &x\in\Omega. \\
   \end{array}
  \right.
  \end{equation}
Solving the second and fourth equation of \eqref{eigS} and using \eqref{def_phi}, we obtain
\begin{eqnarray*}
 \varphi&=&e^{-\int_0^a (\mu(x, s, 0)+\lambda_0)ds} \chi(x, 0) e \phi + \int_0^a \varphi_0(x, l) e^{-\int_l^a(\mu(s)+\lambda_0)ds}dl\\
 &=&e^{-\int_0^a (\mu(x, s, 0)+\lambda_0)ds} \chi(x, 0) e \phi + \int_0^a  e^{-\int_0^a(\mu(s)+\lambda_0)ds}dl\, \chi(x, 0)e  \phi_0.
\end{eqnarray*}
Substitute this into the first equation of \eqref{eigS}, we have
\begin{eqnarray}\label{ct}
&&\lambda_0\phi-d\Delta \phi -\displaystyle\int_{0} ^{a_{\max}} \beta(x, a, 0)e^{-\int_0^a(\mu(x, s, 0)+\lambda_0)ds} da\, \chi e \phi+(m+e)\phi \nonumber\\
&&=\int_0^{a_{\max}}\beta(x, a, 0) a e^{-\int_0^a(\mu(x, s, 0)+\lambda_0)ds}   da\, \chi(x, 0)e \phi_0+\phi_0.
\end{eqnarray}
Note that $\phi_0$ satisfies
\begin{equation}\label{phi00}
  \lambda_0\phi_0-d\Delta \phi_0 -\displaystyle\int_{0} ^{a_{\max}} \beta(x, a, 0)e^{-\int_0^a(\mu(x, s, 0)+\lambda_0)ds} da\, \chi(x,0) e \phi_0+(m+e)\phi_0 =0.
\end{equation}
Multiplying both sides of \eqref{ct} by $\phi_0$, integrating over $\Omega$,  integrating by parts and using \eqref{phi00},
we obtain
$$
0=\int_\Omega \left( \int_0^{a_{\max}}\beta(x, a, 0) a e^{-\int_0^a(\mu(x, s, 0)+\lambda_0)ds}   da\, \chi(x, 0) e(x)+1   \right) \phi_0^2 dx,
$$
which means $\phi_0=0_Y$ and $\varphi_0=0_Z$. Therefore, $\bm\psi\in ker(\lambda_0I-\mathcal{L}_0)$  and  $ker(\lambda_0 I-\mathcal{L}_0)^2\subset ker(\lambda_0 I-\mathcal{L}_0)$. So, the algebraic multiplicity of $\lambda_0$ is 1.
\end{proof}

From the proof of Theorem \ref{theorem_R}, we can obtain the following result, which further relates the sign of $\mathcal{R}_0-1$ and the principal eigenvalue of an elliptic problem.
\begin{proposition}\label{prop_eig0}
    The following statements are valid:
    \begin{enumerate}
    \item[{\rm (i)}] If $a_{\max}=\infty$, suppose that $\mathcal{R}_0\ge 1$. Then $s(\mathcal{L}_0)$ is the principal eigenvalue of \eqref{eigenvalue problem:3};
    \item[{\rm (ii)}] $\mathcal{R}_0-1$ has the same sign as the principal eigenvalue of
       \begin{equation}\label{eigenvalue problem:0}
  \left\{
   \begin{array}{lll}
\lambda \phi=d\Delta \phi + \chi(x,0)e\displaystyle\int_{0} ^{a_{\max}}\beta(x,a,0)e^{-\int_{0}^{a}\mu(x,l,0)dl}da\phi-(m+e)\phi, \quad &x\in\Omega, \\
\partial_\nu \phi=0,~\quad &x\in \partial \Omega.  \\
   \end{array}
  \right.
  \end{equation}
 \end{enumerate}
\end{proposition}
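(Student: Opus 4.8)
The plan is to read off both parts from the material already assembled in the proof of Theorem \ref{theorem_R}, with part (ii) needing, in addition, either one more application of \cite[Theorem 3.5]{Thieme2009} or the monotonicity of an elliptic principal eigenvalue in a weight.

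\emph{Part (i).} In the proof of Theorem \ref{theorem_R} we set up the correspondence \eqref{def_phi}: $(\lambda,\phi,\varphi)$ is an eigenpair of $\mathcal{L}_0$ (equivalently, of \eqref{eigenvalue problem:2}) if and only if $(\lambda,\phi)$ is an eigenpair of the reduced problem \eqref{eigenvalue problem:3}, the equivalence being unconditional when $a_{\max}<\infty$ and requiring $Re(\lambda)>-\underline\mu$ when $a_{\max}=\infty$ (so that the $\varphi$ reconstructed by \eqref{def_phi} lies in $Z$); and we showed that $\lambda_0:=s(\mathcal{L}_0)=\omega(\mathcal{L}_0)$ is a simple eigenvalue of $\mathcal{L}_0$ with a positive eigenfunction, is the only eigenvalue of $\mathcal{L}_0$ with a positive eigenfunction, and strictly dominates every other spectral value. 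Under the standing hypothesis of (i) ($a_{\max}<\infty$, or $a_{\max}=\infty$ with $\mathcal{R}_0\ge1$) Theorem \ref{theorem_R} gives $\lambda_0\ge0>-\underline\mu$, so \eqref{def_phi} applies at $\lambda_0$ and the $u$-component of the principal eigenfunction of $\mathcal{L}_0$ is a positive eigenfunction of \eqref{eigenvalue problem:3} for the eigenvalue $\lambda_0$. I would then transfer the three defining properties: (a) geometric simplicity, because \eqref{def_phi} is a linear isomorphism between $\ker(\lambda_0I-\mathcal{L}_0)$ and the eigenspace of \eqref{eigenvalue problem:3} at $\lambda_0$; (b) algebraic simplicity, inherited from the fact, established at the end of the proof of Theorem \ref{theorem_R}, that $\ker(\lambda_0I-\mathcal{L}_0)^2\subset\ker(\lambda_0I-\mathcal{L}_0)$; (c) uniqueness and dominance, since every eigenvalue of \eqref{eigenvalue problem:3} automatically satisfies $Re(\lambda)>-\underline\mu$ (otherwise the age integral in \eqref{eigenvalue problem:3} diverges when $a_{\max}=\infty$; no constraint is needed when $a_{\max}<\infty$), hence is an eigenvalue of $\mathcal{L}_0$, so it either equals $\lambda_0$ or has strictly smaller real part, and it carries a positive eigenfunction only if it equals $\lambda_0$. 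This says precisely that $s(\mathcal{L}_0)$ is the principal eigenvalue of \eqref{eigenvalue problem:3}.

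\emph{Part (ii).} Note that \eqref{eigenvalue problem:0} is \eqref{eigenvalue problem:k} with $k=0$, so its principal eigenvalue equals $\hat\lambda_0(0)=s(\mathcal{M})$, where $\mathcal{M}\phi:=d\Delta\phi+g\phi-(m+e)\phi$ on $Y$ under the Neumann condition and $g:=\chi(\cdot,0)e\int_0^{a_{\max}}\beta(\cdot,a,0)e^{-\int_0^a\mu(\cdot,l,0)dl}\,da\ge0$. I would determine the sign of $\hat\lambda_0(0)$ in the same spirit as in the proof of Theorem \ref{theorem_R}, where $\hat\lambda_0(0)\ge0$ was deduced from $1/\mathcal{R}_0\le1$: let $\sigma(t)$ be the principal eigenvalue of $d\Delta+tg-(m+e)$, which is continuous and strictly increasing in $t\ge0$ (as $g\ge0$, $g\not\equiv0$), with $\sigma(0)=s(d\Delta-(m+e))<0$; the unique root $t^*>0$ of $\sigma(t)=0$ satisfies $(m+e-d\Delta)\phi=t^*g\phi$ for a positive $\phi$, hence $1/t^*=r(R_1)=\mathcal{R}_0$ by \eqref{R1}, the Krein--Rutman theorem, and the invariance of the spectral radius under cyclic permutation; since $\hat\lambda_0(0)=\sigma(1)$ and $\sigma$ is strictly increasing, $\hat\lambda_0(0)$ has the same sign as $1-t^*=1-1/\mathcal{R}_0$, which has the same sign as $\mathcal{R}_0-1$. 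Equivalently, write $\mathcal{M}=[d\Delta-(m+e)]+[g\,\cdot\,]$ with the first summand resolvent-positive with negative spectral bound and the second positive and bounded, and apply \cite[Theorem 3.5]{Thieme2009} as in Proposition \ref{prop1}, noting $-[g\,\cdot\,]\,[d\Delta-(m+e)]^{-1}=R_1$.

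\emph{Main obstacle.} The one genuinely delicate point is the bookkeeping in (i): \eqref{eigenvalue problem:3} is not the spectral problem of a fixed operator but a family nonlinear in $\lambda$, so one must check that simplicity (geometric and algebraic), uniqueness of the eigenvalue with a positive eigenfunction, and dominance all transfer faithfully through \eqref{def_phi}, and in particular that the convergence constraint $Re(\lambda)>-\underline\mu$ (needed when $a_{\max}=\infty$) does not conceal a spurious eigenvalue with a positive eigenfunction below $-\underline\mu$. Part (ii) is then routine once the monotonicity of $\sigma(\cdot)$ (or Thieme's theorem) is in hand, the only thing to verify being the identification $t^*=1/\mathcal{R}_0$, which is immediate from \eqref{R1}.
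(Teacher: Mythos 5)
Your part (i) is essentially the paper's proof: the paper disposes of (i) with a single sentence pointing back to the proof of Theorem \ref{theorem_R}, and your transfer of simplicity, uniqueness and dominance through \eqref{def_phi} is just that argument written out (your remark about $Re(\lambda)>-\underline\mu$ is slightly imprecise — the age integral can converge somewhat below $-\underline\mu$ if $\mu$ grows — but any such eigenvalue is trivially dominated by $\lambda_0\ge 0$, and the monotonicity of $\hat\lambda_0(k)$ rules out a second eigenvalue with positive eigenfunction, so nothing is lost). Part (ii), however, is a genuinely different route. The paper argues through the abstract level: it uses Proposition \ref{prop1} (Thieme's theorem applied to $\mathcal{L}=\mathcal{B}+\mathcal{F}$ on $\mathbb{X}$) to link the sign of $\mathcal{R}_0-1$ to $s(\mathcal{L}_0)$, identifies $s(\mathcal{L}_0)$ with the root of \eqref{eqk}, reads off the sign of that root from $\hat\lambda_0(0)$ via monotonicity of $\hat\lambda_0(k)$, and then needs a separate contradiction argument for the residual case $a_{\max}=\infty$, $\mathcal{R}_0<1$, where the root characterization is unavailable. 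You instead work entirely at the scalar elliptic level: applying \cite[Theorem 3.5]{Thieme2009} to $\mathcal{M}=[d\Delta-(m+e)]+M_g$ and noting $-M_g[d\Delta-(m+e)]^{-1}=R_1$ with $\mathcal{R}_0=r(R_1)$ from \eqref{R1} gives the sign relation in one stroke, with no case split on $a_{\max}$ or on $\mathcal{R}_0\gtrless 1$ and no appeal to Proposition \ref{prop1} or to the spectral theory of $\mathcal{L}_0$ at all; this is arguably cleaner than the paper's argument. One caution on your first (weighted-eigenvalue) variant: the identification $t^*=1/\mathcal{R}_0$ via "Krein--Rutman and cyclic permutation" is loose, since $R_1=M_gK$ need not be irreducible when $g$ vanishes on part of $\Omega$, and then a positive eigenvalue with a nonnegative eigenvector need not equal $r(R_1)$ (also $g\equiv 0$, i.e.\ $\mathcal{R}_0=0$, must be treated separately since $t^*$ does not exist); this can be repaired by a variational/symmetrization argument, but the Thieme version you offer as the alternative sidesteps it and should be taken as the primary argument.
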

\begin{proof}
(i) This has been shown in the proof of Theorem \ref{theorem_R}.

(ii) Let $\hat \lambda_0(k)$ be defined as in the proof of Theorem \ref{theorem_R}.
Suppose  that either $a_{\max}<\infty$ or $a_{\max}=\infty$ and $\mathcal{R}_0\ge 1$.
We have already known that $k=\lambda_0:=s(\mathcal{L}_0)$ is the unique root of equation \eqref{eqk}. Clearly, the root of \eqref{eqk} has the same sign as $\hat\lambda_0(0)$, which is the principal eigenvalue of \eqref{eigenvalue problem:0}. Finally, suppose that $\mathcal{R}_0< 1$ and $a_{\max}=\infty$. Suppose to the contrary that $\hat\lambda_0(0)\ge 0$. Then equation \eqref{eqk} has a unique solution $\bar s\ge 0$. Then $\bar s$ will be an eigenvalue of \eqref{eigenvalue problem:3}, and hence an eigenvalue of \eqref{eigenvalue problem:2}. This implies that $s(\mathcal{L}_0)\ge \bar s\ge 0$, which contradicts that the fact that $\mathcal{R}_0-1$ has the same sign as $s(\mathcal{L}_0)$ (Proposition \ref{prop1}). Therefore, $\hat\lambda_0(0)< 0$, which has the same sign as $\mathcal{R}_0-1$.
\end{proof}

\begin{remark}
    The eigenvalues of \eqref{eigenvalue problem:3}  are related to the stability of the trivial solution of the following reaction-diffusion with time delay:
    \begin{equation*}\label{delay}
  \left\{
   \begin{array}{lll}
u_t=d\Delta u + \chi(x,0)e(x)\displaystyle\int_{0} ^{a_{\max}}\beta(x,a,0)e^{-\int_{0}^{a}\mu(x,l,0)dl} u(x, t-a)da\phi-(m(x)+e(x))u, \quad &x\in\Omega, \\
\partial_\nu u=0,~\quad &x\in \partial \Omega.  \\
   \end{array}
  \right.
  \end{equation*}
 We refer the interested readers to \cite{wu2012theory} for this topic. Moreover, if $a_{\max}<\infty$ (the time delay is finite), the relation of the signs of the principal eigenvalues of problems \eqref{eigenvalue problem:3} and \eqref{eigenvalue problem:0} can be found in many references (e.g., \cite[Theore 2.2]{thieme2001non} and \cite[Section 4]{kerscher1984asymptotic}).
\end{remark}

\section{Global stability}\label{Stability of the trivial steady state}

In this section, we study the global stability of the equilibria of model \eqref{model}. We will show that $\mathcal{R}_0=1$ is a threshold value for the global dynamics of the model.

\subsection{Global stability of trivial equilibrium when $\mathcal{R}_0<1$}
In this subsection, we  investigate the global stability of the trivial equilibrium $E_{0}$ if $\mathcal{R}_0<1$.

We need the following hypotheses:
\begin{itemize}
\item[(A6)] For each $(x, a)\in \Omega\times (0, a_{\max})$, the function $[0, \infty)\ni P\to \beta(x, a, P)$ is decreasing in $P$, the function $[0, \infty)\ni P\to \mu(x, a, P)$ is increasing in $P$, and the  function $[0, \infty)\ni P\to \chi(x, P)$ is decreasing in $P$.
\end{itemize}

The following result states that the trivial equilibrium is globally stable if $\mathcal{R}_0<1$.
\begin{theorem}\label{theorem_Rsmall}
Suppose that $(A1)$-$(A3)$ and $(A5)$-$(A6)$ hold. If $\mathcal{R}_0<1$, then the trivial equilibrium $E_0$ of \eqref{model} is globally attractive.
\end{theorem}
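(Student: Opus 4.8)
The plan is to squeeze the nonnegative orbit $U(t)\bm x_0$ between $0_{\mathbb{X}}$ and the solution of the linearized problem \eqref{linearized model} at $E_0$, and then to use that the growth bound $\omega(\mathcal{L}_0)$ is negative when $\mathcal{R}_0<1$. Fix $\bm x_0\in\mathbb{X}_{0+}$. By Theorems \ref{solution semiflow}, \ref{Positivity} and \ref{global existence}, the solution satisfies $U(\cdot)\bm x_0\in C([0,\infty),\mathbb{X}_{0+})$ and stays in some ball $\{\|\cdot\|_{\mathbb{X}}\le\xi\}$. Let $\{T_{\mathcal{L}_0}(t)\}_{t\ge0}$ denote the positive $C_0$-semigroup on $\mathbb{X}_0$ generated by $\mathcal{L}_0$, i.e.\ the solution semiflow of \eqref{linearized model}; its positivity comes from Lemma \ref{lemmaL} as used in the proof of Theorem \ref{theorem_R}.

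First I would prove the comparison $0_{\mathbb{X}}\le U(t)\bm x_0\le T_{\mathcal{L}_0}(t)\bm x_0$ for all $t\ge0$. The left inequality is Theorem \ref{Positivity}. For the right one, observe that \eqref{linearized model}, viewed as an abstract Cauchy problem with the bounded linear nonlinearity $D$ in place of $F$, satisfies (A1)--(A4): after freezing $P=0$ the maps $w\mapsto\int_0^{a_{\max}}\beta(\cdot,a,0)w(\cdot,a)\,da$ and $w\mapsto\gamma w-\mu(\cdot,\cdot,0)w$ (for $\gamma$ large) are linear and positive, while $P\mapsto\chi(\cdot,0)$ is constant, so (A4)(i)--(iii) hold. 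Next, using (A6), for every $\bm x=(u,(0_Y,w))\in\mathbb{X}_{0+}$ one has $F(\bm x)\le D(\bm x)$, componentwise: the first slot gives $\int_0^{a_{\max}}\bigl(\beta(\cdot,a,0)-\beta(\cdot,a,\tilde w)\bigr)w(\cdot,a)\,da+cu^{2}\ge0_Y$, the second gives $\bigl(\chi(\cdot,0)-\chi(\cdot,\tilde w)\bigr)eu\ge0_Y$, and the third gives $\bigl(\mu(\cdot,\cdot,\tilde w)-\mu(\cdot,\cdot,0)\bigr)w\ge0_Z$, since $\beta,\chi$ are decreasing and $\mu$ is increasing in $P$ and $u,w\ge0$, $c,e>0$. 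Adding $\gamma Q\bm x$ to both sides, $(F+\gamma Q)(\bm x)\le(D+\gamma Q)(\bm x)$ on the ball of radius $\xi$. For $\gamma$ sufficiently large, $A-\gamma Q$ is resolvent positive (Lemma \ref{positivity:1}), hence $S_{(A-\gamma Q)}\diamond(\cdot)$ is an order-preserving operator; combining this with the variation-of-constants formula for $U(\cdot)\bm x_0$ written with respect to $A-\gamma Q$ gives
$$
U(t)\bm x_0=T_{(A-\gamma Q)_{0}}(t)\bm x_0+\bigl(S_{(A-\gamma Q)}\diamond(F+\gamma Q)(U(\cdot)\bm x_0)\bigr)(t)\le T_{(A-\gamma Q)_{0}}(t)\bm x_0+\bigl(S_{(A-\gamma Q)}\diamond(D+\gamma Q)(U(\cdot)\bm x_0)\bigr)(t),
$$
so $U(\cdot)\bm x_0$ is a lower solution (integral form) of \eqref{linearized model}. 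Applying Proposition \ref{supper solution: integral form} to \eqref{linearized model} then yields $U(t)\bm x_0\le T_{\mathcal{L}_0}(t)\bm x_0$ for all $t\ge0$.

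Second, I would invoke the spectral analysis of Section \ref{threshold value}. When $\mathcal{R}_0<1$, Theorem \ref{theorem_R} (equivalently Proposition \ref{prop1} together with \eqref{L0} and Lemma \ref{lemma_w}) gives $\omega(\mathcal{L}_0)<0$, so there are $\omega'<0$ and $C>0$ with $\|T_{\mathcal{L}_0}(t)\|\le Ce^{\omega't}$ for all $t\ge0$. Since the norm on $\mathbb{X}$ is monotone along the positive cone, the squeeze $0_{\mathbb{X}}\le U(t)\bm x_0\le T_{\mathcal{L}_0}(t)\bm x_0$ implies
$$
\|U(t)\bm x_0\|_{\mathbb{X}}\le\|T_{\mathcal{L}_0}(t)\bm x_0\|_{\mathbb{X}}\le Ce^{\omega't}\|\bm x_0\|_{\mathbb{X}}\longrightarrow0\quad\text{as }t\to\infty.
$$
As $\bm x_0\in\mathbb{X}_{0+}$ was arbitrary, $E_0=(0_Y,(0_Y,0_Z))$ is globally attractive.

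I expect the comparison step to be the main obstacle: since (A4) is not among the standing hypotheses of this theorem, one cannot use monotonicity of the nonlinear semiflow $\{U(t)\}$ directly, and the argument must be routed through the dominating \emph{linear} problem \eqref{linearized model}, which does satisfy (A4), together with the domination $F\le D$ on $\mathbb{X}_{0+}$ furnished by (A6). The delicate points there are checking that (A4) genuinely holds for the linearized operator, and verifying that the $\gamma$-shifted variation-of-constants representation together with the positivity of $S_{(A-\gamma Q)}\diamond(\cdot)$ place $U(\cdot)\bm x_0$ exactly in the lower-solution form required by Proposition \ref{supper solution: integral form}; the remaining steps are either direct citations or routine estimates.
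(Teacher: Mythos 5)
Your proposal is correct and follows essentially the same route as the paper: positivity gives the lower bound, (A6) gives the componentwise domination of $F$ by the linearized nonlinearity, Proposition \ref{supper solution: integral form} (applied to the frozen-at-$P=0$ system, which satisfies (A4) trivially) yields $U(t)\bm x_0\le T_{\mathcal{L}_0}(t)\bm x_0$, and Theorem \ref{theorem_R} gives $\omega(\mathcal{L}_0)<0$ when $\mathcal{R}_0<1$. Your closing observation that (A4) is circumvented by routing the comparison through the dominating linear problem is precisely the point made in the paper's remark following its proof.
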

\begin{proof}
Let $\bm x_0=(u_0, (0_Y, w_0)) \in \mathbb{X}_{0+}$ and $U(t)\bm x_0$ be the integrated solution of \eqref{model}. Let $A$ be defined by \eqref{A} with $\mu_0=0$. Then by (A6),  $U(t)\bm x_0$ satisfies
\begin{eqnarray*}
U(t)\bm x_0&=&T_{(A-\gamma Q)_0}(t)\bm x_0+S_{A-\gamma Q}\diamond (\gamma Q+F)(U(\cdot+s)\bm x_0)(t-s)\\
&\le& T_{(A-\gamma Q)_0}(t)\bm x_0+S_{A-\gamma Q}\diamond (\gamma Q+F_0)(U(\cdot+s)\bm x_0)(t-s), \quad \forall t\ge 0,
\end{eqnarray*}
where
\begin{equation*}
F_0\begin{gathered}\begin{pmatrix}
u\\ \begin{pmatrix}
0_Y\\ w
\end{pmatrix}
\end{pmatrix}
=\begin{pmatrix}
\int_{0} ^{a_{\max}}\beta(\cdot, a, 0) w(\cdot, a)da-(m+e)u\\ \begin{pmatrix}
\chi(\cdot, 0)e u\\-\mu(\cdot,\cdot, 0) w
\end{pmatrix}
\end{pmatrix}.
\end{gathered}
\end{equation*}
By Proposition \ref{supper solution: integral form}, $U(t)\bm x_0\le V(t)\bm x_0$ for all $t\ge 0$, where $V(t)\bm x_0$ is the integrated solution of
\begin{equation}\label{model_low}
  \left\{
   \begin{array}{lll}
\partial_{t} u = d\Delta u +\int_0^{a_{\max}}\beta(x, a, 0)w(x, a, t)da-(m(x)+e(x))u, \quad &x\in\Omega, t>0, \\
\partial_{t}w+\partial_{a}w= -\mu(x,a, 0)w ,\quad &x\in\Omega, t>0,a\in (0,a_{\max}), \\
\partial_\nu u=0,~\quad &x\in \partial \Omega, t>0, \\
 w(x,0,t)= \chi(x,0)e(x)u(x,t),  \quad &x\in\Omega, t>0, \\
 u(x,0)=u_{0}(x), \quad &x\in\Omega, \\
 w(x,a,0)=w_{0}(x,a),\quad  &x\in\Omega, a\in (0, a_{\max}).
   \end{array}
  \right.
  \end{equation}
 Clearly, the solution of \eqref{model_low} is $V(t)\bm x_0=T_{\mathcal{L}_0}(t)\bm x_0$. By Theorem \ref{theorem_R} and  $\mathcal{R}_0<1$, we have $\omega(\mathcal{L}_0)<0$. Hence, there exist $\omega'<0$ and $C>0$ such that $\|T_{\mathcal{L}_0}(t)\|\le Ce^{\omega' t}$ for all $t\ge 0$. It follows that $\lim_{t\to\infty} V(t)\bm x_0\to 0_{\mathbb{X}}$ in $\mathbb{X}$ as $t\to\infty$.  So, $E_0$ is globally attractive.
\end{proof}

\begin{remark}
    In Theorem \ref{theorem_Rsmall}, we do not require (A4). In the proof, when we apply  Proposition \ref{supper solution: integral form}, we use that $U(t)\bm x_0$ is a lower solution of \eqref{model_low}. Clearly, the coefficients $\beta(x, a, 0)$, $\mu(x, a, 0)$, and $\chi(x, 0)$ of  \eqref{model_low} satisfy the assumptions in (A4).
\end{remark}

\subsection{Uniform persistence when $\mathcal{R}_0>1$}


For each $(x, a)\in \Omega\times (0, a_{\max})$, let $\beta_\infty(x, a):=\inf_{P\ge 0} \beta(x, a, P)$. Let
{\small
$$
\partial\mathbb{X}^0_0=\left\{(u_0, (0_Y, w_0)) \in\mathbb{X}_0: \ u_0\equiv 0\ \text{and}\  \int_t^{a_{\max}} \int_\Omega \beta_\infty(x, a)w_0(x, a-t)dxda=0 \ \text{for all} \ t\in [0, a_{\max})\right\}
$$
}
and
{\small
$$
 \mathbb{X}_0^0=\left\{(u_0, (0_Y, w_0)) \in\mathbb{X}_0: \ u_0\not\equiv 0\ \text{or}\  \int_t^{a_{\max}} \int_\Omega \beta_\infty(x, a)w_0(x, a-t)dxda>0 \ \text{for some} \ t\in [0, a_{\max})\right\}.
$$
}
Then, $\mathbb{X}_0=\partial \mathbb{X}_0^0\cup \mathbb{X}^0_0$.
\begin{theorem}\label{theorem_persist}
Suppose that $(A1)$-$(A3)$ and $(A5)$-$(A6)$ hold. If $\mathcal{R}_0>1$, then \eqref{model} is uniformly persistent in the sense that there exists $\epsilon_0>0$ such that the solution of \eqref{model} satisfies
\begin{equation}\label{persist}
\liminf_{t\to\infty} \min_{x\in\bar\Omega} u(x, t)\ge \epsilon_0 \quad\text{and}\quad \liminf_{t\to\infty} \int_0^{a_{\max}}\int_\Omega w(x, a, t)dxda\ge \epsilon_0
\end{equation}
for any initial data in $\mathbb{X}_0^0$.
\end{theorem}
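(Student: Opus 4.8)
The plan is to obtain uniform persistence from the general dynamical-systems theory of persistence (\cite{Hale1988,smith1995monotone,zhao2017dynamical}), exploiting that $\mathcal{R}_0>1$ makes the linearization at $E_0$ grow. By Corollary~\ref{corollary_attractor} the semiflow $\{U(t)\}_{t\ge0}$ is point dissipative and has a global attractor, so it suffices to verify three facts: (a) $\mathbb{X}_0^0$ is forward invariant; (b) every forward orbit that stays in $\mathbb{X}_0\setminus\mathbb{X}_0^0$ converges to $E_0$; and (c) $E_0$ is a uniform weak repeller for $\mathbb{X}_0^0$, i.e.\ there is $\delta_0>0$ with $\limsup_{t\to\infty}\|U(t)\bm x_0-E_0\|_{\mathbb{X}}\ge\delta_0$ for all $\bm x_0\in\mathbb{X}_0^0$. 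Granting (a)--(c), the only invariant set in $\mathbb{X}_0\setminus\mathbb{X}_0^0$ is $\{E_0\}$, so the acyclicity and isolatedness hypotheses are trivially met and the abstract theorem gives $\epsilon_1>0$ with $\liminf_{t\to\infty}\|u(\cdot,t)\|_\infty\ge\epsilon_1$ for all data in $\mathbb{X}_0^0$; the estimates \eqref{persist} are then recovered from the renewal representation \eqref{wint}. Although (A4) is not assumed, all comparison arguments below are applied only to linear subsystems whose coefficients satisfy (A1)--(A4) (cf.\ the Remark after Theorem~\ref{theorem_Rsmall}).

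For (a): if $u_0\not\equiv0$ then $u$ is a nonnegative supersolution of $\partial_t v=d\Delta v-(m+e+cu)v$ with bounded zeroth-order coefficient (Theorem~\ref{global existence}), so the parabolic strong maximum principle gives $u(\cdot,t)\gg0$ for $t>0$, hence $U(t)\bm x_0\in\mathbb{X}_0^0$; if $u_0\equiv0$ but $\bm x_0\in\mathbb{X}_0^0$, then by \eqref{wint} and $\beta\ge\beta_\infty$ the recruitment term in \eqref{uequ} is nonnegative and not identically zero at some finite time, so $u(\cdot,t)\gg0$ for all large $t$, and a direct check with \eqref{wint} shows $U(t)\bm x_0\in\mathbb{X}_0^0$ already for all small $t>0$. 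For (b): if the whole forward orbit of $\bm x_0$ avoids $\mathbb{X}_0^0$, then $u(\cdot,t)\equiv0$ for all $t$ (otherwise strong positivity would push the orbit into $\mathbb{X}_0^0$), hence $w(x,0,t)\equiv0$ and, by \eqref{wint}, $w(\cdot,a,t)=0$ for $a<t$ while $\|w(\cdot,\cdot,t)\|_Z\le e^{-\underline\mu t}\|w_0\|_Z\to0$; thus $U(t)\bm x_0\to E_0$.

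Step (c) is the crux. Suppose it fails. Then, for an arbitrarily small prescribed $\eta'>0$, there are $\bm x_0\in\mathbb{X}_0^0$ and $T>0$ with $\|u(\cdot,t)\|_\infty\le\eta'$ and $P(t)\le\eta'$ for all $t\ge T$ (the bound on $P$ follows from that on $\|u\|_\infty$ via \eqref{wint} and $\underline\mu>0$), and $T$ may be taken large enough that $u(\cdot,T)\gg0$. Using (A6) ($\beta$ decreasing, $\mu$ increasing, $\chi$ decreasing in $P$) together with $cu^2\le\|c\|_\infty\eta'u$, one checks that $F(U(t+T)\bm x_0)\ge F^{\eta'}(U(t+T)\bm x_0)$ for all $t\ge0$, where $F^{\eta'}$ is the \emph{linear} nonlinearity built from the frozen rates $\beta(\cdot,\cdot,\eta')$, $\mu(\cdot,\cdot,\eta')$, $\chi(\cdot,\eta')$ and dispersal mortality $m+e+\|c\|_\infty\eta'$. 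Hence $U(\cdot+T)\bm x_0$ is, in the mild sense, a supersolution of the linear Cauchy problem $\dot{\bm x}=A\bm x+F^{\eta'}(\bm x)$, and Proposition~\ref{supper solution: integral form} (legitimate for this linear problem, whose coefficients obey (A1)--(A4)) yields $U(t+T)\bm x_0\ge T_{\mathcal{L}_0^{\eta'}}(t)U(T)\bm x_0$ for all $t\ge0$, where $\mathcal{L}_0^{\eta'}$ is the part in $\mathbb{X}_0$ of that generator. Since $\mathcal{L}_0^{\eta'}$ has exactly the structure analysed in Theorem~\ref{theorem_R} (and the corresponding net reproductive rate tends to $\mathcal{R}_0>1$ as $\eta'\to0$, so exceeds $1$ for $\eta'$ small), $\omega(\mathcal{L}_0^{\eta'})=s(\mathcal{L}_0^{\eta'})=:\lambda_0(\eta')$ is a simple dominant eigenvalue with positive eigenfunction and positive adjoint eigenfunctional $\bm\psi^*_{\eta'}$, lying strictly above $\omega_{ess}(\mathcal{L}_0^{\eta'})$ (Lemma~\ref{lemma_w}); moreover, by continuous dependence on the rates of the principal eigenvalues of \eqref{eigenvalue problem:k}--\eqref{eigenvalue problem:0} and of $\mathcal{R}_0$ via \eqref{R1}, $\lambda_0(\eta')\to s(\mathcal{L}_0)>0$, so we fix $\eta'$ with $\lambda_0(\eta')>0$. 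Because the $u$-component of $U(T)\bm x_0$ is strictly positive and $\bm\psi^*_{\eta'}$ is a strictly positive functional, the projection of $U(T)\bm x_0$ onto the dominant eigenspace is nonzero, so $\|U(t+T)\bm x_0\|_{\mathbb{X}}\ge\|T_{\mathcal{L}_0^{\eta'}}(t)U(T)\bm x_0\|_{\mathbb{X}}\ge c\,e^{\lambda_0(\eta')t}\to\infty$, contradicting the a priori bound of Theorem~\ref{global existence}. This establishes (c), and hence weak persistence. The part I expect to cost the most work is precisely this step: justifying the supersolution property and the comparison purely within the non-densely-defined/integrated-semigroup framework (the $w$-component of $U(\cdot+T)\bm x_0$ is in general not in $D(A_2)$, so one must argue through the mild formulation of Proposition~\ref{supper solution: integral form} rather than Proposition~\ref{differential form upper solution}), and transferring the simplicity--positivity and eigenvalue-continuity statements of Theorem~\ref{theorem_R} to the perturbed operator $\mathcal{L}_0^{\eta'}$.

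With (a)--(c) in hand, the uniform persistence theorem (\cite{Hale1988}, or the persistence-function version of \cite{smith1995monotone,zhao2017dynamical}) gives $\epsilon_1>0$ with $\liminf_{t\to\infty}\|u(\cdot,t)\|_\infty\ge\epsilon_1$ for every $\bm x_0\in\mathbb{X}_0^0$ (for data with $u_0\equiv0$ one first shifts by the finite time after which $u$ becomes positive, which does not affect the $\liminf$). To pass to \eqref{persist}: since $u$ is a nonnegative supersolution of a uniformly parabolic equation with bounded zeroth-order coefficient, the parabolic Harnack inequality over the bounded connected domain $\Omega$ furnishes fixed $\tau>0$ and $c_1>0$ with $\min_{\bar\Omega}u(\cdot,t)\ge c_1\|u(\cdot,t-\tau)\|_\infty\ge c_1\epsilon_1$ for all large $t$, which is the first estimate in \eqref{persist}. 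For the second, fix $A_0\in(0,a_{\max})$; by \eqref{wint}, for $t>A_0$ one has $\int_0^{a_{\max}}\!\int_\Omega w(x,a,t)\,dx\,da\ge\kappa\,\inf_{s\in[t-A_0,t]}\min_{\bar\Omega}u(\cdot,s)$ with $\kappa:=|\Omega|\,\underline e\,\underline\chi\int_0^{A_0}e^{-\bar\mu a}\,da>0$, where $\underline\chi:=\min_{\bar\Omega}\chi(\cdot,P^*)>0$ and $P^*$ is the attractor bound for $P$ from \eqref{Nb}; for $t$ large this is bounded below by a positive constant. Taking $\epsilon_0$ to be the minimum of the two resulting constants completes the proof.
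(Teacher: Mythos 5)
Your proposal is sound and its skeleton coincides with the paper's: positivity of orbits starting in $\mathbb{X}_0^0$, convergence of boundary orbits to $E_0$ via \eqref{wint} and $\underline\mu>0$, and a proof that $E_0$ repels by bounding the solution from below (Proposition \ref{supper solution: integral form}, legitimate because the frozen coefficients satisfy (A4)) by the linear system with rates frozen at a small $\delta_0$ whose principal eigenvalue is still positive, then deriving unbounded growth to contradict Theorem \ref{global existence}. The differences are in two sub-steps. First, the paper runs the persistence theory of \cite[Section 1.3.2]{zhao2017dynamical} directly with the generalized distance function $\rho(\bm x)=\min\{\min_{\bar\Omega}u,\int_0^{a_{\max}}\int_\Omega w\}$, so \eqref{persist} drops out immediately; you instead persist in $\|u(\cdot,t)\|_\infty$ and then upgrade. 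Your upgrade works, but be careful with the Harnack step: the two-sided parabolic Harnack inequality is for nonnegative \emph{solutions}, not supersolutions, so apply it to the solution $v$ of the linear equation $\partial_t v=d\Delta v-(m+e+c\bar u)v$ with $v(\cdot,t-\tau)=u(\cdot,t-\tau)$ and use $u\ge v$ (or simply adopt the paper's $\rho$, which avoids this entirely). Second, for the exponential growth of the comparison system you invoke the rank-one spectral projection and a strictly positive adjoint eigenfunctional of $\mathcal{L}_0^{\eta'}$; this is exactly the point the paper handles constructively, by exhibiting the positive pair $(\tilde\phi^*,\tilde\varphi^*)$ solving \eqref{phistr1}--\eqref{phistr} and showing the weighted functional $\int_\Omega\check u\tilde\phi^*+\int\int\check w\tilde\varphi^*$ grows like $e^{\tilde\lambda_0 t}$ (with a separate, elementary delay-equation comparison when $a_{\max}<\infty$). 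You correctly flag this as the costly step; to close it you would either prove positivity/simplicity of the dual eigenvector abstractly (irreducibility-type argument on a non-reflexive space) or, more economically, reproduce the paper's explicit computation, which buys the same conclusion without touching the adjoint semigroup. With those two adjustments your argument is complete and equivalent in strength to the paper's.
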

\begin{proof}
We divide the proof into several steps.
\\
\emph{Step 1. Claim: For any $\bm x_0=(u_0, (0_Y, w_0)) \in \mathbb{X}_0^0$, there exists $t_0>0$ such that the solution of \eqref{model} satisfies $u(x, t)>0$ and $w(x, a, t)>0$ for all $x\in\bar\Omega$, $t>t_0$ and $a<t$.}


Let $U(t)\bm x_0=(u(\cdot, t), (0_Y, w(\cdot, \cdot, t)))$ be the solution of \eqref{model}. If $u_0\not\equiv 0$, by the maximum principle of parabolic equations, we have $u(x, t)>0$ for all $x\in\bar\Omega$ and $t>0$. Then by \eqref{wint}, we know $w(x, a, t)>0$ for all $x\in\bar\Omega$, $t>0$ and $a<t$.

If $w_0$ satisfies
\begin{equation}\label{w0}
\int_{t_0}^{a_{\max}} \int_\Omega \beta_\infty(x, a)w_0(x, a-t_0)dxda>0
\end{equation}
for some $t_0\in [0, a_{\max})$, by \eqref{wint}, we have
\begin{eqnarray*}
&&\int_0^{a_{\max}} \int_\Omega \beta(x, a, P)w(x, a, t_0)dxda\\
&&\ge  \int_{t_0}^{a_{\max}}\int_\Omega\beta_\infty (x, a) w_0(x, a-{t_0}) e^{-\int_0^{t_0} \mu(x, a-s, P(t_0-s))ds} dx da>0.
\end{eqnarray*}
So by the first equation of \eqref{model} and parabolic maximum principle, we have $u(x, t)>0$ for all $t>t_0$ and $x\in\bar\Omega$.  Then by \eqref{wint}, we know $w(x, a, t)>0$ for all $x\in\bar\Omega$, $t>t_0$ and $a<t$. This proves the claim.

\vspace{0.5cm}
    Define
$$
\rho(\bm x)=\min\left\{\min_{x\in\bar\Omega} u(x), \ \int_0^{a_{\max}} \int_\Omega w(x, a)dxda \right\}, \quad\forall \bm x=(u, (0_Y, w)) \in \mathbb{X}_0.
$$
\\
\emph{Step 2. Claim: $\rho$ is a generalized distance function in the sense that for any $\bm x_0\in (\mathbb{X}_0^0\cap \rho^{-1}(0))\cup \rho^{-1}(0, \infty)$, we have $\rho(U(t)\bm x_0)>0$}
for large $t$.

Let $\bm x_0=(u_0, (0_Y, w_0)) \in \rho^{-1}(0, \infty)$. Then, $u_0\not\equiv 0$. By the proof of step 1,  we have $u(x, t)>0$ and $w(x, a, t)>0$ for all $x\in\bar\Omega$, $t>0$ and $a<t$. Hence, $\rho(U(t)\bm x_0)>0$ for all $t>0$.

Let $\bm x_0\in \mathbb{X}_0^0\cap \rho^{-1}(0)$. Then either $u_0\not\equiv 0$ or $w_0$ satisfies \eqref{w0}. It suffices to  consider the case  that $w_0$ satisfies \eqref{w0}. By Step 1, there exists $t_0\in (0, a_{\max})$ such that $u(x, t)>0$ and $w(x, a, t)>0$ for all $x\in\bar\Omega$, $t>t_0$ and $a<t$. Hence, $\rho(U(t)\bm x_0)>0$ for all $t>t_0$. This proves the claim.

\vspace{0.5cm}
Define $M_\partial:=\{\bm x_0\in \partial \mathbb{X}_0^0: \ U(t)\bm x_0\in\partial\mathbb{X}_0^0,\quad \forall\ t\ge 0\}$.
\\
\emph{Step 3. Claim: The $\omega$-limit set $\omega(\bm x_0)$ is $\{E_0\}$ for any $\bm x_0\in M_\partial$, where $E_0$ is the trivial equilibrium.}

Let $\bm x_0=(u_0, (0_Y, w_0)) \in M_\partial$. By the definition of $M_\partial$ and $\partial \mathbb{X}_0^0$, we have $u(x, t)= 0$ for all $x\in\bar\Omega$ and $t\ge 0$. By \eqref{wint},
\begin{equation*}
    w(x, a, t)=
      \begin{cases}
    w_0(x, a-t) e^{-\int_0^t \mu(x, a-s, P(t-s))ds}, \ &\text{if}\ a> t, \\
0, \ &\text{if}\ a<t,
\end{cases}
\end{equation*}
for all $x\in\bar\Omega$. It follows that
\begin{eqnarray*}
 \int_0^{a_{\max}}\|w(\cdot, a, t)\|_\infty da&=&\int_t^{a_{\max}} \|w_0(\cdot, a-t) e^{-\int_0^t \mu(\cdot, a-s, P(t-s))ds} \|_\infty da  \\
 &\le & \int_t^{a_{\max}} \|w_0(\cdot, a-t) e^{-\underline\mu t}\|_\infty da\\
 &= & e^{-\underline\mu t}  \|w_0\|_Z\to 0\quad \text{as}\ t\to\infty.
\end{eqnarray*}
Therefore, we must have $\omega(\bm x_0)=\{E_0\}$.

\vspace{0.5cm}
\noindent\emph{Step 4. Claim: $W^s(E_0)\cap \rho^{-1}(0, \infty)=\emptyset$, where $W^s(E_0)$ is the stable set of $E_0$.  }

Suppose to the contrary that there exists $\bm x_0=(u_0, (0_Y, w_0))  \in W^s(E_0)\cap \rho^{-1}(0, \infty)$. Choose  $\delta_0>0$ small, which will be specified later. Since $U(t)\bm x_0\to E_0$ in $\mathbb{X}$ as $t\to\infty$, there exists $t'>0$ such that $\|u(\cdot, t)\|_\infty\le \delta_0$ and $P(t)=\int_0^{a_{\max}}\int_\Omega w(x, a, t)dxda\le \delta_0$
for all $t\ge t'$.

 Recall that $A$ is defined by \eqref{A} with $\mu_0=0$. Then by (A6),  $U(t)\bm x_0$ satisfies
\begin{eqnarray*}
U(t)\bm x_0&=&T_{(A-\gamma Q)_0}(t)\bm x_0+S_{A-\gamma Q}\diamond (\gamma Q+F)(U(\cdot+s)\bm x_0)(t-s)\\
&\ge& T_{(A-\gamma Q)_0}(t)\bm x_0+S_{A-\gamma Q}\diamond (\gamma Q+F_1)(U(\cdot+s)\bm x_0)(t-s), \quad \forall t\ge t',
\end{eqnarray*}
where
\begin{equation*}
F_1\begin{gathered}\begin{pmatrix}
u\\ \begin{pmatrix}
0_Y\\ w
\end{pmatrix}
\end{pmatrix}
=\begin{pmatrix}
\int_{0} ^{a_{\max}}\beta(\cdot, a, \delta_0) w(\cdot, a)da-(m+e+c\delta_0) u\\
\begin{pmatrix}
\chi(\cdot, \delta_0)e u\\
-\mu(\cdot,\cdot, \delta_0) w
\end{pmatrix}
\end{pmatrix}.
\end{gathered}
\end{equation*}
By Proposition \ref{supper solution: integral form}, $U(t)\bm x_0\ge (\check u, (0_Y, \check w))$ for all $t\ge t'$, where $(\check u, (0_Y, \check w))$ is the integrated solution of the following problem
\begin{equation}\label{model_upp}
  \left\{
   \begin{array}{lll}
\partial_{t} u = d\Delta u +\int_0^{a_{\max}}\beta(x, a, \delta_0)w(x, a, t)da-(m+e+c\delta_0)u, \quad &x\in\Omega, t>t', \\
\partial_{t}w+\partial_{a}w= -\mu(x,a, \delta_0)w ,\quad &x\in\Omega, t>t',a\in (0,a_{\max}), \\
\partial_\nu u=0,~\quad &x\in \partial \Omega, t>t', \\
 w(x,0,t)= \chi(x,\delta_0)e(x)u(x,t),  \quad &x\in\Omega, t>t',
   \end{array}
  \right.
  \end{equation}
  with initial data $U(t')\bm x_0$. Define
\begin{equation*}
\tilde{\mathcal{L}}:=
\begin{pmatrix}
d\Delta u+\int_0^{a_{\max}}\beta(\cdot, a, \delta_0) w(\cdot, a)da-(m+e+c\delta_0)u \\
-w(\cdot,0)+\chi(\cdot,\delta_0)e u\\
-w_{a}-\mu(\cdot,\cdot,\delta_0)w
\end{pmatrix},
\quad\forall \bm x=(u, (0_Y, w)) \in D(\tilde{\mathcal{L}}),
\end{equation*}
with $D(\tilde{\mathcal{L}})=D({\mathcal{L}})$. Since $\mathcal{R}_0>1$, by Theorem \ref{theorem_R}, $s(\mathcal{L}_0)>0$ is a principal eigenvalue of $\mathcal{L}_0$. Therefore, we can choose $\delta_0>0$ small such that $s(\tilde{\mathcal{L}}_0)>0$, where $\tilde{\mathcal{L}}_0$ is the part of $\tilde{\mathcal{L}}$ in $\mathbb{X}_0$.  Moreover, using similar arguments as in Theorem \ref{theorem_R}, one can show that $\tilde\lambda_0:=s(\tilde{\mathcal{L}}_0)$ is the principal eigenvalue of $\tilde{\mathcal{L}}_0$ with a positive eigenvector $(\tilde \phi, (0_Y, \tilde \varphi))$.

Firstly, suppose $a_{\max}=\infty$. Similar to \eqref{eigenvalue problem:3}, $\tilde\phi^*:=\tilde\phi$ satisfies
   \begin{equation}\label{phistr1}
  \left\{
   \begin{array}{lll}
\tilde\lambda_0 \tilde\phi^*=d\Delta \tilde\phi^* + \chi(x,\delta_0)eA_{\tilde\lambda_0}-(m+e+c\delta_0)\tilde\phi^*, \quad &x\in\Omega, \\
\partial_\nu \tilde\phi^*=0,~\quad &x\in \partial \Omega,  \\
   \end{array}
  \right.
  \end{equation}
where
$$
A_{\tilde\lambda_0}(x):=\tilde\phi^*\int_0^{a_{\max}} \beta(x, a, \delta_0)  e^{-\int_0^a(\mu(x, l, \delta_0)+\tilde\lambda_0)dl} da, \quad\forall x\in\bar\Omega.
$$
Let
$$
\tilde\varphi^*(x, a):=\tilde\phi^*\int_a^{a_{\max}} \beta(x, s, \delta_0)  e^{-\int_a^s(\mu(x, l, \delta_0)+\tilde\lambda_0)dl} ds, \quad \forall x\in\bar\Omega, a\in [0, a_{\max}).
$$
 It is easy to check that $\tilde\varphi^*$ satisfies
\begin{equation}\label{phistr}
  \left\{
   \begin{array}{lll}
\partial_{a} \tilde\varphi^* = (\mu(x, l, \delta_0)+\tilde\lambda_0)\tilde\varphi^*-\beta(x, a, \delta_0)\tilde\phi^*, \quad &x\in\Omega, a\in (0, a_{\max}), \\
 \tilde\varphi^*(x,0)=A_{\tilde\lambda_0}(x),\quad  &x\in\Omega.
   \end{array}
  \right.
  \end{equation}

Multiplying the first two equations of \eqref{model_upp} by $\tilde \phi^*$ and $\tilde \varphi^*$ and integrating over $\Omega$ and $\Omega\times (0, a_{\max})$, respectively, we obtain
\begin{eqnarray*}
&& \quad \frac{d}{dt}\left( \int_\Omega \check u\tilde\phi^*dx+\int_0^{a_{\max}}\int_\Omega \check w\tilde\varphi^* dxda   \right) \\
&&=\int_\Omega \left[d\Delta\check u+\int_0^{a_{\max}}\beta(x, a, \delta_0)\check w(x, a, t)da-(m+e+c\delta_0)\check u \right] \tilde\phi^* dx\\
&&\quad+\int_0^{a_{\max}}\int_\Omega \left(-\partial_a\check w-\mu(x, a, \delta_0)\check w \right)\tilde\varphi^*dxda\\
&&=\tilde\lambda_0\int_\Omega \check u\tilde\phi^*dx-\int_\Omega \chi(x, \delta_0)e A_{\tilde\lambda_0}\check u\tilde dx+\int_\Omega\left(\int_0^{a_{\max}} \beta(x, a, \delta_0)\check w(x, a, t)da\right) \tilde\phi^* dx\\
&&\quad+\int_0^{a_{\max}}\int_\Omega (\partial_a\tilde\varphi^*-\mu(x, a, \delta_0)\tilde\varphi^*)\check w dx da+\int_\Omega \check w(x, 0, t)\tilde\varphi^*(x, 0)dx\\
&&=\tilde\lambda_0 \left( \int_\Omega \check u\tilde\phi^*dx+\int_0^{a_{\max}}\int_\Omega \check w\tilde\varphi^* dxda   \right),
\end{eqnarray*}
where we have used \eqref{phistr1} and \eqref{phistr} in the above computations. Since $\tilde\lambda_0>0$, we have
$$
\int_\Omega \check u\tilde\phi^*dx+\int_0^{a_{\max}}\int_\Omega \check w\tilde\varphi^* dxda    \to \infty \ \ \text{as}\ t\to\infty.
$$
This contradicts the boundedness of the solution $U(t)\bm x_0$ of \eqref{model}.

Then, suppose $a_{\max}<\infty$. By \eqref{model_upp}, we have
$$
\check w(x, a, t)=\chi(x, \delta_0)e\check u(x, t-a)e^{-\int_0^a\mu(x, l, \delta_0) dl},\quad \forall t>a_{\max}.
$$
Substituting it into the first equation of \eqref{model_upp}, we obtain
\begin{equation}\label{model_upp11}
  \left\{
   \begin{array}{lll}
\partial_{t} \check u = d\Delta \check u +\int_0^{a_{\max}}\beta(x, a, \delta_0)\chi(x, \delta_0)e\check u(x, t-a)e^{-\int_0^a\mu(x, l, \delta_0) dl}da-(m+e+c\delta_0)\check u, \\
\partial_\nu \check u=0,  \\
   \end{array}
  \right.
  \end{equation}
for $t>a_{\max}$. Similar to \eqref{eigenvalue problem:3}, $(\tilde\lambda_0, \tilde\phi^*)$ satisfies
  \begin{equation*}
  \left\{
   \begin{array}{lll}
\lambda \phi=d\Delta \phi + \chi(x,\delta_0)e(x)\displaystyle\int_{0} ^{a_{\max}}\beta(x,a,\delta_0)e^{-\int_{0}^{a}\mu(x,l,\delta_0)dl}e^{-\lambda a}da\phi-(m+e+c\delta_0)\phi, \quad &x\in\Omega, \\
\partial_\nu \phi=0,~\quad &x\in \partial \Omega.  \\
   \end{array}
  \right.
  \end{equation*}
It follows that $\check u(x, t)\ge \epsilon\tilde\phi^* e^{(t-a_{\max})\tilde\lambda_0}$ for all $x\in\bar\Omega$ and $t\ge a_{\max}$, where $\epsilon$ is small such that $\check u(\cdot, a_{\max})\ge \epsilon \tilde\phi^*$. Noticing $\tilde\lambda_0>0$, this contradicts  the boundedness of $u$.

\vspace{0.5cm}
\noindent Finally, combining Steps 1-4, by \cite[Section 1.3.2]{zhao2017dynamical},  \eqref{model} is uniformly persistent with respect to $(\mathbb{X}_0^0, \partial \mathbb{X}_0^0, \rho)$ in the sense that there exists $\epsilon_0>0$ such that $\liminf_{t\to\infty} \rho(U(t)\bm x_0)\ge \epsilon_0$ for any $\bm x_0\in \mathbb{X}_0^0$. Hence, \eqref{persist} holds.
\end{proof}

\subsection{Global stability of the positive equilibrium when $\mathcal{R}_0>1$}

This subsection is devoted to investigating the existence and stability of the positive equilibrium when $\mathcal{R}_{0}>1$. An equilibrium  $E=(u, (0_Y, w))\in D(A)$  is called a positive equilibrium if $(u, (0_Y, w))\ge (\not\equiv) (0_Y, (0_Y, 0_Z))$. Clearly, $E$ satisfies
\begin{equation}\label{steady state equation}
  \left\{
   \begin{array}{lll}
d\Delta u +\int_{0} ^{a_{\max}}\beta(x,a, P)w(x,a)da-(m+e)u- cu^{2}=0, \quad &x\in\Omega, \\
\partial_{a}w=-\mu(x,a, P)w,\quad &x\in\Omega, a\in (0, a_{\max}), \\
\partial_\nu u=0, \quad &x\in \partial \Omega, \\
 w(x,0)=\chi(x, P)e(x)u(x),  \quad &x\in\Omega, \\
   \end{array}
  \right.
  \end{equation}
where $P=\int_0^{a_{\max}}\int_\Omega w(x, a)dxda$.

\begin{lemma}
 Suppose that $(A1)$-$(A3)$ and $(A5)$ hold.   Then,  $E=(u,(0_Y, w))$ is a positive equilibrium of \eqref{model} if and only if $(u,P)$ with $u\in D(A_1)$ and $P\in \mathbb{R}_+$ is a positive solution of
 \begin{equation}\label{steady state equation1}
  \left\{
   \begin{array}{lll}
 d\Delta u + e u \int_{0} ^{a_{\max}}\beta(x,a, P)\chi(x, P) e^{-\int_0^a\mu(x, l, P)dl}da-(m+e)u- cu^{2}=0, \quad &x\in\Omega, \\
\partial_\nu u=0, \quad &x\in \partial \Omega, \\
 P=\int_0^{a_{\max}}\int_\Omega\chi(x, P) e u(x) e^{-\int_0^a\mu(x, l, P)dl}dxda,  \quad & \\
   \end{array}
  \right.
  \end{equation}
  and
\begin{equation}\label{ssw}
w(x, a)=\chi(x, P) e(x) u(x) e^{-\int_0^a\mu(x, l, P)dl}, \quad x\in\Omega, a\in [0, a_{\max}).
  \end{equation}
\end{lemma}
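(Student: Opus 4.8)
The plan is to treat the age–structure equation in \eqref{steady state equation} as a linear first–order ODE in $a$ with the scalar parameter $P$ frozen, solve it explicitly by the variation–of–constants formula, and thereby reduce the coupled elliptic–transport steady–state system to the semilinear elliptic problem for $u$ together with a single scalar self–consistency constraint on $P$. I would first record that $E=(u,(0_Y,w))\in D(A)$ is an equilibrium of \eqref{model} precisely when $AE+F(E)=0_{\mathbb X}$, which, after unwinding the definitions of $A$ and $F$, is exactly system \eqref{steady state equation} together with $P=\int_0^{a_{\max}}\int_\Omega w(x,a)\,dx\,da$ (note that the quantity $\tilde w$ appearing inside $F$ coincides with this $P$ at an equilibrium).

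For the ``only if'' direction, assume $E$ is a positive equilibrium, so $w\in W^{1,1}((0,a_{\max}),Y)$ satisfies the second and fourth equations of \eqref{steady state equation}. For each fixed $x\in\bar\Omega$ the map $a\mapsto w(x,a)$ solves $\partial_a w=-\mu(x,a,P)w$ with $w(x,0)=\chi(x,P)e(x)u(x)$; integrating gives exactly \eqref{ssw}. Substituting \eqref{ssw} into the recruitment term $\int_0^{a_{\max}}\beta(x,a,P)w(x,a)\,da$ converts the first equation of \eqref{steady state equation} into the first equation of \eqref{steady state equation1}, and inserting \eqref{ssw} into $P=\int_0^{a_{\max}}\int_\Omega w\,dx\,da$ yields the third equation of \eqref{steady state equation1}. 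Since $E\in D(A)$ we have $u\in D(A_1)$. Positivity transfers as follows: $u\not\equiv 0$ (otherwise \eqref{ssw} forces $w\equiv 0$, i.e. $E=E_0$), and since $\chi(\cdot,P),e>0$ the third equation of \eqref{steady state equation1} gives $P>0$; one may additionally invoke the strong maximum principle on the first equation of \eqref{steady state equation1} to get $u>0$ on $\bar\Omega$.

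For the ``if'' direction, given a positive solution $(u,P)$ of \eqref{steady state equation1} with $u\in D(A_1)$, define $w$ by \eqref{ssw}. Using (A1)--(A3) and, when $a_{\max}=\infty$, assumption (A5) (so that $\mu\ge\underline\mu>0$), one obtains bounds of the form $\|w(\cdot,a)\|_\infty\le C e^{-\underline\mu a}$ and $\|\partial_a w(\cdot,a)\|_\infty\le C e^{-\underline\mu a}$ with $C$ depending on $\|u\|_\infty$, which are integrable on $(0,a_{\max})$ (trivially so if $a_{\max}<\infty$); hence $w\in W^{1,1}((0,a_{\max}),Y)$ and $E:=(u,(0_Y,w))\in D(A)$. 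By construction $w$ satisfies $\partial_a w=-\mu(\cdot,\cdot,P)w$ and $w(\cdot,0)=\chi(\cdot,P)eu$, i.e. the second and fourth equations of \eqref{steady state equation}; the third equation of \eqref{steady state equation1} asserts $\int_0^{a_{\max}}\int_\Omega w\,dx\,da=P$, so the frozen parameter is self-consistent and equals $\tilde w$; and substituting \eqref{ssw} back into the first equation of \eqref{steady state equation1} recovers the first equation of \eqref{steady state equation}. Therefore $AE+F(E)=0_{\mathbb X}$, and positivity of $(u,P)$ together with $\chi(\cdot,P),e>0$ and \eqref{ssw} gives $E\ge(\not\equiv)0_{\mathbb X}$, so $E$ is a positive equilibrium.

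The computations themselves are routine; the two points that need a little care are (i) passing honestly from the functional relation ``$P=\int_0^{a_{\max}}\int_\Omega w(u,P)$'' to the scalar constraint displayed in \eqref{steady state equation1}, so that the correspondence is a genuine equivalence and not merely an implication, and (ii) the regularity claim $w\in W^{1,1}((0,a_{\max}),Y)$ in the case $a_{\max}=\infty$, which is precisely where the positive lower bound $\underline\mu>0$ from (A5) is used to guarantee exponential decay of the age profile. Neither of these is a serious obstacle.
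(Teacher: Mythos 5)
Your proposal is correct and follows essentially the same route as the paper: solve the second and fourth equations of \eqref{steady state equation} in $a$ with $P$ frozen to get \eqref{ssw}, then substitute into the first equation and into the definition of $P$ to obtain \eqref{steady state equation1}, and reverse the substitution for the converse. The extra details you supply (the $W^{1,1}$ regularity of $w$ via the exponential decay from $\underline\mu>0$ when $a_{\max}=\infty$, and the transfer of positivity) are exactly the points the paper leaves implicit in its "the conclusion follows easily" remark, and they are handled correctly.
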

\begin{proof}
    Solving the second and fourth equation of system  \eqref{steady state equation}, we obtain \eqref{ssw}. Substituting it into the first equation of \eqref{steady state equation}, we obtain the first equation \eqref{steady state equation1}. The conclusion follows easily from this observation.
\end{proof}

If $E=(u,(0_Y, w))$ is a positive equilibrium of system \eqref{model}, by the elliptic maximum principle, either $u\equiv 0$ on $\bar\Omega$ or $u(x)>0$ for all $x\in\bar\Omega$. Then by \eqref{ssw}, we must have  $u(x)>0$ for all $x\in\bar\Omega$ and $w(x, a)>0$ for all $x\in\bar\Omega$ and $a\in [0, a_{\max})$.

The following assumption is weaker than (A6):
\begin{itemize}
\item[(A7)] The functions
$$[0, \infty)\ni P\to \int_{0} ^{a_{\max}}\beta(\cdot,a, P)\chi(\cdot, P) e^{-\int_0^a\mu(\cdot, l, P)dl}da$$
and
$$[0, \infty)\ni P\to \int_0^{a_{\max}}\chi(\cdot, P) e^{-\int_0^a\mu(\cdot, l, P)dl}da
$$
are decreasing in $P$.
\end{itemize}


\begin{proposition}\label{existence of positive steady state}
Suppose that $(A1)$-$(A3)$, $(A5)$ and $(A7)$ hold. If $\mathcal{R}_{0}>1$, then system \eqref{model} has a unique positive equilibrium.
\end{proposition}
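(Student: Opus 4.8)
The plan is to use the preceding lemma to reduce the problem to finding a pair $(u,P)$ with $u>0$ and $P>0$ solving the scalar system \eqref{steady state equation1}, and then to treat $P$ as a parameter. For fixed $P\ge 0$ write
$$
h(x,P):=e(x)\int_{0}^{a_{\max}}\beta(x,a,P)\chi(x,P)e^{-\int_0^a\mu(x,l,P)dl}da-(m(x)+e(x)),
$$
so that the first equation of \eqref{steady state equation1} becomes the logistic problem $d\Delta u+h(x,P)u-c(x)u^2=0$ in $\Omega$ with $\partial_\nu u=0$ on $\partial\Omega$. By the first part of (A7), $P\mapsto h(\cdot,P)$ is decreasing, and by (A2)--(A3) (with dominated convergence) it is continuous from $[0,\infty)$ into $Y$. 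Let $\sigma(P)$ be the principal eigenvalue of $d\Delta+h(\cdot,P)$ under the Neumann boundary condition; then $\sigma$ is continuous and nonincreasing in $P$. The key observation is that $\sigma(0)$ is exactly the principal eigenvalue of \eqref{eigenvalue problem:0}, which by Proposition \ref{prop_eig0}(ii) has the same sign as $\mathcal{R}_0-1$; hence $\mathcal{R}_0>1$ gives $\sigma(0)>0$. Set $P^*:=\sup\{P\ge 0:\sigma(P)>0\}\in(0,\infty]$.

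For every $P\in[0,P^*)$ one has $\sigma(P)>0$, and by the classical theory of logistic equations \cite{Cantrell2003} the problem $d\Delta u+h(x,P)u-c(x)u^2=0$, $\partial_\nu u=0$, has a unique positive solution $u_P$, which by elliptic regularity belongs to $D(A_1)$; when $\sigma(P)\le 0$ the only nonnegative solution is $u\equiv 0$ (multiply by the principal eigenfunction and integrate). Since $h(\cdot,P)$ is decreasing in $P$, a sub-/supersolution comparison combined with the uniqueness of the positive solution shows that $u_P$ is nonincreasing in $P$, and a monotone-limit argument together with elliptic estimates shows that $P\mapsto u_P$ is continuous into $Y$; moreover, if $P^*<\infty$ then $\sigma(P^*)=0$ by continuity of $\sigma$, so $u_P\to 0$ in $Y$ as $P\uparrow P^*$, because the limit is a nonnegative solution of the $P^*$-logistic problem and must vanish when the principal eigenvalue is zero. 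Finally set $g(x,P):=\int_0^{a_{\max}}\chi(x,P)e^{-\int_0^a\mu(x,l,P)dl}da$, which is positive, continuous in $P$, nonincreasing in $P$ by the second part of (A7), and bounded (when $a_{\max}=\infty$ using $\mu\ge\underline\mu>0$ from (A5)).

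Now define $\Phi:[0,P^*)\to[0,\infty)$ by $\Phi(P):=\int_\Omega e(x)u_P(x)g(x,P)dx$, which is precisely the right-hand side of the third equation of \eqref{steady state equation1} after interchanging the order of integration. Then $\Phi$ is continuous, $\Phi(0)>0$ since $u_0>0$, and $\Phi$ is nonincreasing on $[0,P^*)$ as a product of the nonnegative nonincreasing functions $u_P$ and $g(\cdot,P)$. Hence $\psi(P):=\Phi(P)-P$ is strictly decreasing on $[0,P^*)$ with $\psi(0)>0$, while $\psi(P)<0$ near the right endpoint of $[0,P^*)$: if $P^*<\infty$ then $\psi(P)\to 0-P^*<0$ as $P\uparrow P^*$, and if $P^*=\infty$ then $\psi(P)\to-\infty$ since $\Phi$ is bounded. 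By the intermediate value theorem $\psi$ has exactly one zero $\bar P\in(0,P^*)$. Setting $u:=u_{\bar P}>0$ and $P:=\bar P$ we obtain a solution of \eqref{steady state equation1}, and together with $w$ defined by \eqref{ssw} this gives a positive equilibrium. For uniqueness, any positive equilibrium yields, through the preceding lemma, a pair $(u,P)$ with $u>0$; existence of a positive solution of the $P$-logistic problem forces $\sigma(P)>0$, i.e. $P\in[0,P^*)$, so $u=u_P$ by uniqueness of the positive solution and $P=\Phi(P)$, whence $P=\bar P$.

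I expect the main technical obstacles to be the continuity of $P\mapsto u_P$ in $Y$ (handled by monotone convergence, elliptic regularity, and uniqueness of the positive logistic solution) and the careful analysis of the boundary behaviour of $\Phi$ as $P\uparrow P^*$, which splits into the case $P^*<\infty$ (where one must show $u_P\to 0$ using $\sigma(P^*)=0$) and the case $P^*=\infty$ (where one uses the a priori bounds on $u_P$ and on $g$ coming from (A5) and (A7)). The remaining ingredients — the logistic dichotomy in terms of $\sigma(P)$, the monotonicity of $\sigma$ and of $u_P$, and the identification of $\sigma(0)$ with the principal eigenvalue of \eqref{eigenvalue problem:0} — are either classical or already established in the earlier sections.
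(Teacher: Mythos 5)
Your proposal is correct and follows essentially the same route as the paper: reduce to the scalar fixed-point equation $P=\Phi(P)$ (the paper's $H(P)=P$) for the total population, using the logistic dichotomy for $u_P$, the monotonicity of $u_P$ and of the kernel in $P$ coming from (A7), and Proposition \ref{prop_eig0} to get positivity of the principal eigenvalue at $P=0$ from $\mathcal{R}_0>1$. The only difference is bookkeeping: the paper defines $u_P$ for all $P\ge 0$ (with $u_P\equiv 0$ when the eigenvalue is nonpositive) so that $H(P)\le H(0)$ immediately forces a unique crossing, whereas you restrict to $[0,P^*)$ and analyze the endpoint behaviour, which is slightly more work but equally valid.
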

\begin{proof}
Let $\hat \lambda_0$ be the  principal eigenvalue of \eqref{eigenvalue problem:0}. By Proposition \ref{prop_eig0} and $\mathcal{R}_{0}>1$, we have $\hat \lambda_0>0$. For each $P\ge 0$, let $u_P$ be the unique stable equilibrium of
       \begin{equation}\label{fkpp}
  \left\{
   \begin{array}{lll}
d\Delta u + \chi(x,P)e u\displaystyle\int_{0} ^{a_{\max}}\beta(x,a,P)e^{-\int_{0}^{a}\mu(x,l,P)dl}da-(m+e)u-cu^2=0, \quad &x\in\Omega, \\
\partial_\nu u=0,~\quad &x\in \partial \Omega.  \\
   \end{array}
  \right.
  \end{equation}
Let $\hat\lambda_P$ be the principal eigenvalue of
       \begin{equation}\label{eigenvalue problem:P}
  \left\{
   \begin{array}{lll}
\lambda \phi=d\Delta \phi + \chi(x,P)e(x)\displaystyle\int_{0} ^{a_{\max}}\beta(x,a,P)e^{-\int_{0}^{a}\mu(x,l,P)dl}da\phi-(m+e)\phi, \quad &x\in\Omega, \\
\partial_\nu \phi=0,~\quad &x\in \partial \Omega.  \\
   \end{array}
  \right.
  \end{equation}
It is well-known that $u_P>0$ if and only if $\hat\lambda_P>0$, and  $u_P\equiv 0$ if and only if $\hat\lambda_P\le 0$ \cite{Cantrell2003}. By (A7), $u_P$ is decreasing in $P$. Indeed, if $P<P'$ such that $u_{P'}>0$, it is easy to see that $u_{P'}$ is a lower solution of \eqref{fkpp}. We can choose a large positive constant to be an upper solution of \eqref{fkpp} . By the method of upper/lower solutions and the uniqueness of the positive solution, we have $u_P\ge u_{P'}$.

Define $H:[0, \infty)\to [0, \infty)$ by
$$
H(P)=\int_\Omega\int_0^{a_{\max}}\chi(x, P) e u_P(x) e^{-\int_0^a\mu(x, l, P)dl}dadx.
$$
Since $u_P$ is decreasing in $P$ and by (A7), $F$ is decreasing in $P$. Since $\hat\lambda_0>0$, we know $H(0)>0$. Therefore, the equation $H(P)=P$ has a unique positive solution. It follows that \eqref{model} has a unique positive equilibrium.
\end{proof}

 Next, we present a result on the local stability of the positive equilibrium.

\begin{proposition}\label{local stability of positive steady state}
Suppose that $(A1)$-$(A3)$ and $(A5)$-$(A6)$ hold. If $\mathcal{R}_{0}>1$, then the positive equilibrium $E$ of system \eqref{model} is linearly  stable.
\end{proposition}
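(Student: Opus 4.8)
The plan is to linearize \eqref{model} at the positive equilibrium $E=(u^*,(0_Y,w^*))$ (which exists and is unique by Proposition \ref{existence of positive steady state}, since (A6) implies (A7)) and to show that the resulting linear operator has negative growth bound. Writing $u=u^*+\tilde u$, $w=w^*+\tilde w$, $P^*=\int_0^{a_{\max}}\int_\Omega w^*\,dx\,da$, $\tilde P=\int_0^{a_{\max}}\int_\Omega\tilde w\,dx\,da$, and keeping first order terms, the nonlocal dependence on $P$ produces the operator $\mathcal{L}^{E}=A+D^{E}$, where $A$ is as in \eqref{A} with $\mu_0=\mu(\cdot,\cdot,P^*)$ and $D^{E}$ is the Fréchet derivative of $F$ at $E$: in the $u$-equation it adds the terms $\big(\int_0^{a_{\max}}\partial_P\beta(x,a,P^*)w^*(x,a)\,da\big)\tilde P$ and $-2c(x)u^*(x)\tilde u$, in the $w$-equation the term $-\partial_P\mu(x,a,P^*)w^*(x,a)\tilde P$, and in the renewal condition the term $\partial_P\chi(x,P^*)e(x)u^*(x)\tilde P$. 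Since $D^{E}$ is a bounded linear operator, $\mathcal{L}^{E}$ is a Hille--Yosida operator and its part $\mathcal{L}^{E}_{0}$ in $\mathbb{X}_0$ generates a strongly continuous semigroup; ``linearly stable'' means $\omega(\mathcal{L}^{E}_{0})<0$.

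First I would reduce the problem to the point spectrum. Repeating the splitting and compactness argument of Lemma \ref{lemma_w} — the extra terms $-2cu^*$, the $\tilde P$-feedback terms (which factor through the rank-one map $\tilde w\mapsto\tilde P$ and integration against the fixed profiles $w^*$), and the parabolic smoothing in the $\tilde u$-component are all compact or harmless perturbations — one obtains $\omega_{ess}(\mathcal{L}^{E}_{0})\le-\underline\mu<0$ when $a_{\max}=\infty$ and $\omega_{ess}(\mathcal{L}^{E}_{0})=-\infty$ when $a_{\max}<\infty$. By \eqref{L0} it then suffices to show that every eigenvalue $\lambda$ of $\mathcal{L}^{E}_{0}$ satisfies $\mathrm{Re}\,\lambda<0$; the cases $a_{\max}<\infty$ and $a_{\max}=\infty$ are handled as in the proof of Theorem \ref{theorem_R}, restricting in the latter to $\mathrm{Re}\,\lambda>-\underline\mu$, which contains the relevant half-plane.

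Given an eigentriple $(\lambda,\phi,\psi)$ of $\mathcal{L}^E_0$ with $\Pi:=\int_0^{a_{\max}}\int_\Omega\psi\,dx\,da$, integrate the $\psi$-equation along characteristics to express $\psi$ explicitly in terms of $\phi$, $\Pi$ and $\lambda$, using the known form \eqref{ssw} of $w^*$, and substitute into the $\phi$-equation. In the case $\Pi=0$ the eigenfunction $\psi$ reduces to $\chi(\cdot,P^*)e\,\phi\,e^{-\lambda a}e^{-\int_0^a\mu(\cdot,l,P^*)dl}$, and multiplying the resulting elliptic identity for $\phi$ by $\bar\phi$, integrating over $\Omega$, taking real parts, and using $|e^{-\lambda a}|\le1$ for $\mathrm{Re}\,\lambda\ge0$ yields $\mathrm{Re}\,\lambda\int_\Omega|\phi|^2\le -d\int_\Omega|\nabla\phi|^2+\int_\Omega\big(\Theta(x,P^*)-(m+e)-2cu^*\big)|\phi|^2\,dx$, where $\Theta(x,P^*):=\chi(x,P^*)e(x)\int_0^{a_{\max}}\beta(x,a,P^*)e^{-\int_0^a\mu(x,l,P^*)dl}da$. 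Since $u^*>0$ solves $d\Delta u^*+\big(\Theta(\cdot,P^*)-(m+e)-cu^*\big)u^*=0$ by \eqref{steady state equation1}, the principal eigenvalue of $d\Delta+\Theta(\cdot,P^*)-(m+e)-cu^*$ is $0$, so that of $d\Delta+\Theta(\cdot,P^*)-(m+e)-2cu^*$ is strictly negative because $cu^*>0$ on $\bar\Omega$; hence $\mathrm{Re}\,\lambda<0$. In the case $\Pi\neq0$ one divides by $\Pi$ to obtain a scalar characteristic equation, and the monotonicity hypotheses (A6) — which give $\partial_P\beta\le0$, $\partial_P\mu\ge0$, $\partial_P\chi\le0$ — make the feedback contributions sign-definite in the stabilizing direction; combined with the equilibrium identities for $u^*$, $w^*$ and again $|e^{-\lambda a}|\le1$, this should rule out roots with $\mathrm{Re}\,\lambda\ge0$.

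The hard part will be precisely the case $\Pi\neq0$: the dependence of $\beta$, $\mu$, $\chi$ on the global quantity $P$ makes the characteristic equation genuinely nonlocal, so the $\bar\phi$-test-function argument does not close. My plan there is to test the $\phi$-equation against $u^*$ rather than $\bar\phi$, so that the term $\int_0^{a_{\max}}\beta(x,a,P^*)\psi\,da$ pairs, via Green's identity, with the equilibrium relation $d\Delta u^*=-\int_0^{a_{\max}}\beta(x,a,P^*)w^*\,da+(m+e)u^*+c(u^*)^2$; a large number of terms then collapse, leaving an identity in which the surviving $\Pi$-coefficient is non-positive when $\mathrm{Re}\,\lambda\ge0$ thanks to (A6), while the leftover competition term $c(u^*)^2$ supplies the strict negativity forcing $\mathrm{Re}\,\lambda<0$. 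Carefully tracking these signs through the complex characteristic equation, and checking the bookkeeping in both the $a_{\max}<\infty$ and $a_{\max}=\infty$ regimes, is the step I expect to require the most work.
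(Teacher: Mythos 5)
There is a genuine gap, and it sits exactly where you predicted: the case $\Pi\neq 0$. Everything up to that point (the reduction of $\omega_{ess}$ along the lines of Lemma \ref{lemma_w}, the reduction to the point spectrum via \eqref{L0}, and the $\Pi=0$ case handled by testing against $\bar\phi$ and comparing with the principal eigenvalue of $d\Delta+\Theta(\cdot,P^*)-(m+e)-2cu^*$, which is negative because $u^*$ makes the operator with $-cu^*$ have principal eigenvalue $0$) is sound and close in spirit to what the paper does. But for $\Pi\neq 0$ you only offer a plan, and the plan is unlikely to close: once $\lambda$, $\phi$, $\psi$, $\Pi$ are complex, testing the $\phi$-equation against the real function $u^*$ produces terms such as $\int_\Omega \phi\, u^*(\cdots)dx$ and $\Pi\int_\Omega(\cdots)dx$ whose real parts carry arbitrary phases, so the monotonicity signs from (A6) do not translate into sign-definite inequalities, and $|e^{-\lambda a}|\le 1$ no longer yields a one-sided estimate. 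The bookkeeping you defer is precisely the obstruction, and it is not a matter of care: the $\bar\phi$- and $u^*$-testing arguments both rely on pointwise nonnegativity of the quantities multiplying the kernel, which fails for a genuinely complex eigentriple with nonzero $\Pi$.

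The paper circumvents this entirely by never analyzing the nonlocal feedback spectrally. It writes the linearization at $E$ as $\mathcal{L}'+\mathcal{C}$, where $\mathcal{L}'$ freezes the coefficients at $P$ (and carries the $-2cu$ term) and $\mathcal{C}$ collects exactly your $\tilde P$-feedback terms ($\tilde\varphi\int_0^{a_{\max}}\beta_P w\,da$, $\chi_P e u\tilde\varphi$, $-\mu_P w\tilde\varphi$). By (A6) these derivatives have fixed signs, so $\mathcal{C}\bm\psi\le 0_{\mathbb{X}}$ on the positive cone; since $\mathcal{C}$ is compact (rank-one through $\tilde w\mapsto\tilde\varphi$) and both $\mathcal{L}'_0$ and $(\mathcal{L}'+\mathcal{C})_0$ generate positive semigroups with $\omega_{ess}<0$, the comparison theorem for positive semigroups (\cite[Corollary VI.1.11]{Engel2000}) gives $s((\mathcal{L}'+\mathcal{C})_0)\le s(\mathcal{L}'_0)$. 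This reduces the whole problem to showing $s(\mathcal{L}'_0)<0$, where the spectral bound is a \emph{real} principal eigenvalue with a \emph{positive} eigenfunction (as in Theorem \ref{theorem_R}); there the paper tests against $u$, and the identity
$0\le\lambda_0'\int_\Omega\phi' u\,dx=\int_\Omega\bigl(\chi e\phi' u\int_0^{a_{\max}}\beta e^{-\int_0^a\mu}(e^{-\lambda_0' a}-1)da-cu^2\phi'\bigr)dx<0$
closes the argument because all factors are genuinely nonnegative. So the missing idea in your proposal is the order-theoretic domination step $s((\mathcal{L}'+\mathcal{C})_0)\le s(\mathcal{L}'_0)$, which converts the intractable complex, nonlocal characteristic equation into a local problem with a real principal eigenvalue; your $\Pi=0$ computation is then essentially all that remains to be done. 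Note also that (A6) is used in the paper only for this domination step, which is why the full strength of (A6), and not just (A7), appears in the hypotheses.
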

\begin{proof}
By Theorem \ref{existence of positive steady state}, \eqref{model} has a unique positive equilibrium $E=(u, (0_Y, w))$ . Let $P=\int_0^{a_{\max}} \int_\Omega w(x, a) dxda$.
Linearizing \eqref{model} at $E$, we obtain the following eigenvalue problem:
\begin{equation}\label{eigLp}
\mathcal{L}'\bm\psi+\mathcal{C}\bm\psi=\lambda\bm\psi, \quad \bm\psi=(\phi, 0_Y, \varphi)\in D(\mathcal{L}'+\mathcal{C})=D(\mathcal{L}),
\end{equation}
where
\begin{equation*}
\begin{gathered}\mathcal{L}'\bm\psi
=\begin{pmatrix}
d\Delta\phi+\int_{0} ^{a_{\max}}\beta(\cdot,a,P)\varphi(\cdot, a) da-(m+e+2cu)\phi \\
-\varphi(\cdot,0)+\chi(\cdot,P)e\phi\\
-\varphi_{a}-\mu(\cdot,\cdot,P)\varphi
\end{pmatrix}
\end{gathered}
\end{equation*}
and
\begin{equation*}
\begin{gathered}\mathcal{C}\bm\psi
=\begin{pmatrix}
     \tilde\varphi \int_{0} ^{a_{\max}}\beta_P(\cdot,a,P)w(\cdot, a) da \\
\chi_P(\cdot, P) e u \tilde\varphi\\
-\mu_P(\cdot, \cdot, P)w\tilde\varphi
\end{pmatrix}
\end{gathered}
\end{equation*}
with $\tilde\varphi=\int_0^{a_{\max}}\int_\Omega \varphi(x, a) dxda$. Clearly, $\mathcal{C}$ is compact.

Similar to $\mathcal{L}$ in Section \ref{threshold value}, $\mathcal{L}'$ is resolvent positive, the part of $\mathcal{L}'$ in $\mathbb{X}_0$,  $\mathcal{L}'_0$, is the generator of a positive strongly continuous semigroup $\{T_{\mathcal{L}'_0}(t)\}_{t\ge 0}$ in $\mathbb{X}_0$, and $\omega_{ess}(\mathcal{L}_0')<0$.  Since $\mathcal{C}$ is compact, $\mathcal{L}'+\mathcal{C}$ is resolvent positive, the part of $\mathcal{L}'+\mathcal{C}$ in $\mathbb{X}_0$,  $(\mathcal{L}'+\mathcal{C})_0$, is the generator of a positive strongly continuous semigroup $\{T_{(\mathcal{L}'+\mathcal{C})_0}(t)\}_{t\ge0}$ in $\mathbb{X}_0$,
and $\omega_{ess}((\mathcal{L}+\mathcal{C})_0')\le \omega_{ess}(\mathcal{L}_0')<0$ \cite[Theorem 1.2]{Ducrot2008}. Moreover, by (A6), we have $\mathcal{C}\bm\psi\le 0_\mathbb{X}$ for any $\bm\psi\in \mathbb{X}_0$. It follows from \cite[Corollary VI.1.11]{Engel2000} that $s((\mathcal{L}'+\mathcal{C})_0)\le s(\mathcal{L}'_0)$.

It suffices to show $s(\mathcal{L}'_0)<0$. Suppose to the contrary that $s(\mathcal{L}'_0)\ge 0$. Since $\omega_{ess}(\mathcal{L}_0')<0$, similar to Theorem \ref{theorem_R}, $\lambda'_0:=s(\mathcal{L}'_0)$ is the principal eigenvalue of $\mathcal{L}'_0$ corresponding with a positive eigenfunction $\bm\psi'=(\phi', (0_Y, \varphi'))$. Moreover, similar to \eqref{eigenvalue problem:3}, $\phi'$ satisfies
  \begin{equation}\label{eigpp}
  \left\{
   \begin{array}{lll}
\lambda_0' \phi'=d\Delta \phi' + \chi(x,P)e\displaystyle\int_{0} ^{a_{\max}}\beta(x,a,P)e^{-\int_{0}^{a}\mu(x,l, P)dl}e^{-\lambda_0' a}da\phi'-(m+e+2cu)\phi', \quad &x\in\Omega, \\
\partial_\nu \phi'=0,~\quad &x\in \partial \Omega.  \\
   \end{array}
  \right.
  \end{equation}
Multiplying the first equation of \eqref{eigpp} by $u$ and the first equation of \eqref{steady state equation1} by $\phi'$, taking the difference, and integrating the resulting equation over $\Omega$, we obtain
$$
0\le \lambda_0'\int_\Omega \phi' udx =\int_\Omega \left(\chi(x,P)e\phi' u\displaystyle\int_{0} ^{a_{\max}}\beta(x,a,P)e^{-\int_{0}^{a}\mu(x,l, P)dl}(e^{-\lambda_0' a}-1)da-cu^2\phi'\right)dx<0.
$$
This is a contradiction. Hence,  $\lambda_0'=s(\mathcal{L}'_0)<0$.
\end{proof}

The following result concerns the global attractivity of the positive equilibrium of \eqref{model}.

\begin{theorem}\label{theorem_global}
   Suppose that $(A1)$-$(A5)$ and $(A7)$ hold, and $\mathcal{R}_0>1$. Let $\bm\psi$ be a positive eigenvector of $\mathcal{L}_0$ corresponding to the principal eigenvalue $s(\mathcal{L}_0)$ (see Proposition \ref{prop_eig0}). Let $\bm x_0=(u_0, (0_Y, w_0))\in\mathbb{X}_0$.
   If $\bm x_0\ge \epsilon \bm\psi$ and $w_0\le Me^{-\underline \mu a}$ for some $\epsilon, M>0$, then the solution of \eqref{model} with initial data $\bm x_0$ satisfies
    \begin{equation}\label{conv}
   \lim_{t\to\infty} u(\cdot, t)=u^*\ \ \text{in}\ Y \ \ \text{and}\ \ \lim_{t\to\infty} w(\cdot, \cdot, t)=w^* \ \text{in}\  Z,
    \end{equation}
    where $E=(u^*, (0_Y, w^*))$ is the unique positive equilibrium of \eqref{model}. In addition, if $a_{\max}<\infty$, then \eqref{conv} holds if $\bm x_0\in \mathbb{X}_0^0$.
\end{theorem}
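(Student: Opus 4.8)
The plan is to establish the convergence in \eqref{conv} by squeezing the solution between two monotone solutions that share the same limit, using the comparison principle (Theorem \ref{Monotone Semiflow}, Propositions \ref{supper solution: integral form}--\ref{differential form upper solution}) together with the convergence of monotone orbits to equilibria (Theorem \ref{Theorem_c}). First I would construct a \emph{lower solution}: since $s(\mathcal{L}_0)>0$ when $\mathcal{R}_0>1$ (Theorem \ref{theorem_R}), the positive eigenvector $\bm\psi=(\phi, (0_Y, \varphi))$ of $\mathcal{L}_0$ satisfies $A\bm\psi + D\bm\psi = s(\mathcal{L}_0)\bm\psi \ge 0$. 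For $\epsilon>0$ small, $\underline{\bm x}_0 := \epsilon\bm\psi$ will satisfy $A\underline{\bm x}_0 + F(\underline{\bm x}_0)\ge 0_{\mathbb{X}}$: the nonlinear terms $-cu^2$ and the $P$-dependence of $\beta,\mu,\chi$ produce only higher-order corrections, which are dominated by the strictly positive linear term $s(\mathcal{L}_0)\epsilon\bm\psi$ once $\epsilon$ is small (here (A6)/(A7) control the sign of the corrections, or one absorbs them crudely). By Theorem \ref{Increasing and Decreasing Solution}(ii) and Theorem \ref{Theorem_c}(ii), $t\mapsto U(t)\underline{\bm x}_0$ is increasing and converges in $\mathbb{X}$ to some equilibrium $\bm x_- \ge \epsilon\bm\psi > 0_{\mathbb{X}}$; since the positive equilibrium is unique (Proposition \ref{existence of positive steady state}), $\bm x_- = E$.

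Next I would construct an \emph{upper solution}. Take a large constant state together with the age profile it induces: for $K>0$ large, let $\overline u \equiv K$ and $\overline w(x,a) = Ke^{-\underline\mu a}$ (truncated at $a_{\max}$), and set $\overline{\bm x}_0 = (\overline u,(0_Y,\overline w))$. Using (A5) ($\bar\chi<\infty$, $\underline\mu>0$) one checks $A\overline{\bm x}_0 + F(\overline{\bm x}_0)\le 0_{\mathbb{X}}$ when $K$ is large enough: the $-cu^2$ term beats the bounded recruitment $\bar\beta\|\overline w\|_Z = \bar\beta K/\underline\mu$, and $-\partial_a \overline w - \mu_0\overline w$ together with the boundary term are handled by the choice $\overline w = Ke^{-\underline\mu a}$ and $\mu\ge\underline\mu$. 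Here the hypothesis $w_0\le Me^{-\underline\mu a}$ is exactly what lets us choose $K$ large enough that $\overline{\bm x}_0 \ge \bm x_0$ (and $\overline{\bm x}_0 \ge \epsilon\bm\psi$ as well, shrinking $\epsilon$ if needed). By Theorem \ref{Increasing and Decreasing Solution}(i) and Theorem \ref{Theorem_c}(i), $t\mapsto U(t)\overline{\bm x}_0$ is decreasing and converges in $\mathbb{X}$ to an equilibrium $\bm x_+$; since $\bm x_+ \ge U(t)\underline{\bm x}_0 \to E > 0_{\mathbb{X}}$, again $\bm x_+ = E$ by uniqueness. Finally, monotonicity of the semiflow (Theorem \ref{Monotone Semiflow}) with $\epsilon\bm\psi \le \bm x_0 \le \overline{\bm x}_0$ gives
$$
U(t)(\epsilon\bm\psi) \le U(t)\bm x_0 \le U(t)\overline{\bm x}_0, \qquad \forall t\ge 0,
$$
and letting $t\to\infty$ both outer terms converge to $E$ in $\mathbb{X}$, hence $U(t)\bm x_0\to E$, which unpacks into \eqref{conv}.

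For the last sentence (the case $a_{\max}<\infty$, $\bm x_0\in\mathbb{X}_0^0$), I would replace the pointwise lower bound $\bm x_0\ge\epsilon\bm\psi$ by an argument that \emph{after a finite time} the solution dominates $\epsilon\bm\psi$. By the uniform persistence Theorem \ref{theorem_persist}, $\liminf_{t\to\infty}\min_x u(x,t)\ge\epsilon_0$ and $\liminf_{t\to\infty}P(t)\ge\epsilon_0$; combined with the parabolic strong maximum principle this forces $u(\cdot,t_1)\ge c_0 > 0$ on $\bar\Omega$ for some $t_1$, and then via \eqref{wint} (with $a_{\max}$ finite, so all characteristics $a<t$ after time $t_1+a_{\max}$ carry the positive boundary data) $w(\cdot,a,t_2)$ is bounded below by a positive profile at some $t_2$. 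Hence $U(t_2)\bm x_0\ge\epsilon\bm\psi$ for suitable small $\epsilon$, and since $w_0\le Me^{-\underline\mu a}$ is automatic up to enlarging $M$ on a finite age interval (or one uses that after time $a_{\max}$ the $w_0$-part has left the system, so $U(t)\bm x_0$ is already sandwiched), the semiflow property plus the sandwich argument above applied to $U(\cdot)(U(t_2)\bm x_0)$ yields \eqref{conv}. The main obstacle I anticipate is verifying the differential inequality $A\underline{\bm x}_0 + F(\underline{\bm x}_0)\ge 0$ for the eigenfunction-based lower solution cleanly: $\bm\psi$ need not lie in a convenient subset, the nonlinear corrections mix the $u$- and $w$-components through $P$, and one must be careful that the monotonicity hypotheses (A4) needed for Theorems \ref{Monotone Semiflow}--\ref{Theorem_c} are genuinely available (they are, since (A1)--(A5),(A7) are assumed and (A6) is not — so one should check (A4) is implied, or re-derive the needed comparison directly from (A7)). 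A secondary subtlety is ensuring the two equilibria $\bm x_\pm$ obtained are both positive (nonzero), so that uniqueness in Proposition \ref{existence of positive steady state} identifies them with $E$; this is where the strict positivity $\bm x_-\ge\epsilon\bm\psi>0$ is essential and must be propagated to $\bm x_+$ via the chain $\bm x_+\ge\bm x_-$.
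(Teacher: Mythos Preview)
Your approach is essentially the paper's: build $\underline{\bm\psi}=\epsilon\bm\psi$ and $\bar{\bm\psi}=(M_1,0_Y,M_2e^{-\underline\mu a})$ as sub/super equilibria, apply Theorem~\ref{Theorem_c} to get monotone convergence of $U(t)\underline{\bm\psi}$ and $U(t)\bar{\bm\psi}$ to $E$, then squeeze via Theorem~\ref{Monotone Semiflow}. Two small corrections: first, (A4) \emph{is} among the hypotheses (the theorem assumes (A1)--(A5) and (A7)), so your worry about re-deriving the comparison principle from (A7) is unnecessary---indeed the paper uses (A4)(iii) ($\chi$ increasing in $P$) to verify the $\chi$-inequality in the lower-solution check. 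Second, for the $a_{\max}<\infty$ case you cannot invoke Theorem~\ref{theorem_persist} directly, since that result assumes (A6), which is \emph{not} hypothesized here; instead (as the paper does, and as you also sketch) use only Step~1 of that proof---strict positivity of $u$ after time $a_{\max}$ via the parabolic maximum principle---together with \eqref{wint} to obtain both $U(t_2)\bm x_0\ge\epsilon\bm\psi$ and the exponential upper bound on $w(\cdot,\cdot,t_2)$, then run the sandwich from time $t_2$.
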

\begin{proof}
By Proposition
\ref{existence of positive steady state}, \eqref{model} has a unique positive equilibrium $E=(u^*, (0_Y, w^*))$.

Let $\underline{\bm\psi}=\epsilon\bm\psi=\epsilon(\phi, (0_Y, \varphi))\in D(A)\cap \mathbb{X}_{0+}$ for some $\epsilon>0$. We claim that, if $\epsilon$ is sufficiently small, then $\underline{\bm\psi}$ satisfies
\begin{equation}\label{lows}
    A\underline{\bm\psi} + F(\underline{\bm\psi})\ge 0_{\mathbb{X}}.
\end{equation}
Indeed, \eqref{lows} is equivalent to
\begin{equation}\label{lows1}
  \left\{
   \begin{array}{lll}
d\Delta \phi +\int_{0} ^{a_{\max}}\beta(x,a, \epsilon \tilde\varphi)\varphi(x,a)da-(m+e)\phi- c\epsilon \phi^{2}\ge 0, \quad &x\in\Omega, \\
-\partial_{a}\varphi-\mu(x,a, \epsilon\tilde\varphi)\varphi\ge 0,\quad &x\in\Omega, a\in (0, a_{\max}), \\
\partial_\nu \phi\ge 0, \quad &x\in \partial \Omega, \\
 -\varphi(x,0)+\chi(x, \epsilon\tilde\varphi)e(x)\phi\ge 0,  \quad &x\in\Omega, \\
   \end{array}
  \right.
  \end{equation}
  where $\tilde\varphi=\int_0^{a_{\max}}\int_\Omega\varphi(x,a) dx da$.
  By $\mathcal{R}_0>1$ and Proposition \ref{prop_eig0}, $\bm\psi$ is a positive eigenvector of $\mathcal{L}_0$ corresponding with principal eigenvalue $\lambda_0:=s(\mathcal{L}_0)>0$. By \eqref{eigenvalue problem:2}, \eqref{lows1} holds if
 \begin{equation}\label{lows2}
  \left\{
   \begin{array}{lll}
\lambda_0 \phi +\int_{0} ^{a_{\max}}\beta(x,a, \epsilon \tilde\varphi)\varphi(x,a)da-\int_{0} ^{a_{\max}}\beta(x,a, 0)\varphi(x,a)da- c\epsilon \phi^{2}\ge 0,  &x\in\Omega, \\
\lambda_0\varphi+\mu(x,a, 0)\varphi-\mu(x,a, \epsilon\tilde\varphi)\varphi\ge 0, &x\in\Omega, a\in (0, a_{\max}), \\
 -\chi(x, 0)e(x)\phi+\chi(x, \epsilon\tilde\varphi)e(x)\phi\ge 0,   &x\in\Omega.
   \end{array}
  \right.
  \end{equation}
Since $\lambda_0>0$, we can choose $\epsilon>0$ small such that the first two inequalities of \eqref{lows2} hold.
Since $\chi$ is increasing in $P$, the third inequality of \eqref{lows2} holds. This verifies \eqref{lows}. By Theorem \ref{Theorem_c}, the solution  $U(t)\underline{\bm\psi}$ of model \eqref{model} with initial data $\underline{\bm\psi}$ is increasing and converges to a positive equilibrium. Since the positive equilibrium $E$ is unique, we must have $U(t)\underline{\bm\psi}\to E$ in $\mathbb{X}$ as $t\to\infty$.

Let $\bar{\bm\psi}=(M_1, 0_Y, M_2e^{-\underline\mu a})\in D(A)\cap \mathbb{X}_{0+}$ for some $M_1, M_2>0$. We claim that one can choose  $M_1$ and $M_2$  such that $\bar{\bm\psi}$ satisfies
\begin{equation}\label{upps}
    A\bar{\bm\psi} + F(\bar{\bm\psi})\le 0_{\mathbb{X}}.
\end{equation}
Indeed, by (A4), \eqref{upps} holds if
\begin{equation}\label{upps1}
  \left\{
   \begin{array}{lll}
\int_{0} ^{a_{\max}} \beta(x, a, \bar P) M_2 e^{-\underline \mu a}da-(m+e)M_1- cM_1^{2}\le 0, \quad &x\in\Omega, \\
M_2 \underline\mu e^{-\underline\mu a}-\mu(x,a, \bar P)M_2 e^{-\underline\mu a}\le 0,\quad &x\in\Omega, a\in (0, a_{\max}), \\
 -M_2+\chi(x, \bar P)e(x)M_1\le 0,  \quad &x\in\Omega, \\
   \end{array}
  \right.
  \end{equation}
  where $\bar P=M_2 \int_0^{a_{\max}} e^{-\mu a} da$. The second inequality of  \eqref{upps1} holds by the assumptions on $\mu$. Therefore, \eqref{upps1} holds if
\begin{equation}\label{M12}
\frac{M_2\bar\beta }{\underline\mu}    \le  \underline c M_1^2 \quad \text{and}\quad \bar\chi \bar e M_1\le M_2.
\end{equation}
We can choose $M_1, M_2$ large with $M_2\ge M$ such that  \eqref{M12} holds. By Theorem \ref{Theorem_c}, $U(t)\bar{\bm\psi}$ is decreasing and converges  to the unique positive equilibrium $E$ in $\mathbb{X}$  as $t\to\infty$.

By the assumptions on $\bm x_0$ and the choices of $\underline{\bm\psi}$ and $\bar{\bm\psi}$, we have $\underline{\bm\psi}\le \bm x_0\le \bar{\bm\psi}$. By Theorem \ref{Monotone Semiflow}, $U(t)\underline{\bm\psi}\le U(t)\bm x_0\le U(t)\bar{\bm\psi}$ for all $t\ge 0$. Since $U(t)\underline{\bm\psi}\bm \to E$ and $U(t)\bar{\bm\psi} \to E$ as $t\to\infty$, we must have $U(t)\bm x_0\to E$ in $\mathbb{X}$ as $t\to\infty$.

Finally, if $a_{\max}<\infty$ and $\bm x_0\in \mathbb{X}_0^0$, by the proof of Theorem \ref{theorem_persist}, $u(x, t)>0$ for all $x\in\bar\Omega$ and $t> a_{\max}$.  By \eqref{wint}, we have
 \begin{equation}\label{wbd}
 w(x, a, t)=\chi(x, P(t-a))e(x) u(x, t-a) e^{-\int_0^a
 \mu(x, a-s, P(t-s))ds}, \ \ \forall t> a.
 \end{equation}
 Therefore, we can choose $\epsilon, M>0$ such that
 $(u(\cdot, a_{\max}+1), w(\cdot, \cdot, a_{\max}+1))\ge \epsilon (\phi, \varphi)$ and $w(\cdot, a, a_{\max}+1)\le Me^{-\underline \mu a}$ for all $a\in [0, a_{\max}]$. Hence, \eqref{conv} holds.
\end{proof}

\begin{remark}
    In Theorem \ref{theorem_global}, if $a_{\max}=\infty$ and (A6) hold ($\chi$ is independent of $P$ as we assume both (A4) and (A6)), then the assumption $w_0\le Me^{-\underline\mu a}$ can be dropped. To see that, suppose $\bm x_0\ge \epsilon \bm\psi$ for some $\epsilon>0$. By \eqref{wbd}, there is $M>0$ such that
    $\limsup_{t\to\infty} w(x, a, t)\le M e^{-\underline\mu a}$ for all $x\in\bar\Omega$ and $a\ge 0$. Since $U(t)\bm x_0\ge U(t) \underline{\bm \psi}$ for all $t\ge 0$ and  $U(t)\bm x_0$ is increasing in $t$, we have $\liminf_{t\to\infty}U(t)\bm x_0 \ge \epsilon \bm\psi$. Therefore, by Theorem \ref{theorem_global}, the $\omega$-limit set $\omega(\bm x_0)$ is contained in the stable set of $E$. By Proposition \ref{local stability of positive steady state}, $E$ is locally asymptotically stable. Then it follows from \cite[Theorem 4.1]{thieme1992convergence} that \eqref{conv} holds.
\end{remark}

\section{Discussion}
In this paper, we investigated the global dynamics of the parabolic-hyperbolic hybrid system \eqref{model}, which models the spatiotemporal behavior of a population with distinct dispersal and sedentary stages. Due to the non-compactness of solution mapping and the presence of nonlocal terms $B(x,t)$ and $P(t)$ in the system, the global analysis require significant technical effort. Following the non-densely defined operator approach established in \cite{Ruan2018}, we reformulated the model \eqref{model} as an abstract Cauchy problem \eqref{Cauchy problem}. Using  the theory developed in \cite{Hale1988}, we  proved the asymptotic smoothness of solution semiflow.

The dependence of the vital rates ($\beta$, $\mu$, and $\chi$) on the nonlocal term $P(t)$ presents considerable challenges when applying the Lyapunov method to analyze the global dynamics of model \eqref{model}. Therefore, we employ the monotone semiflow theory developed in \cite{Magal2019} to examine the global dynamics of the system. Additionally, we derived a biologically interpretable net reproductive rate, $\mathcal{R}_{0}$, and described its connection to the principal eigenvalue of linearized system \eqref{linearized model} (see Theorem \ref{theorem_R}). Under suitable monotonicity conditions, we obtained threshold dynamics results:  $\mathcal{R}_{0}=1$  serves as the boundary separating population persistence from population extinction.

Several avenues for future work are possible.  (1) In our analysis, the monotonicity of the vital rates  with respect to the total number of stationary individuals, $P$,  plays a crucial role. Relaxing these monotonicity assumptions will introduce   challenging problems that require further investigation.   (2) Model \eqref{model} assumes that the effect of intra-specific competition on the settlement proportion, $\chi$, is spatially homogeneous, i.e., $\chi=\chi(x, P(t))$. Alternatively,  one could consider a spatially heterogeneous  nonlocal term by defining $P(x,t)=\int_{0}^{a_{\max}}w(x,a,t)da$  and letting $\chi=\chi(x, P(x, t))$ in model \eqref{model}.    Notably, this modification would make proving the asymptotic smoothness of the solution semiflow for system \eqref{model} more challenging. (3) Model \eqref{model} employs the Laplacian operator to describe the local random movement of dispersing individuals. However, some species display nonlocal dispersal behaviors, where individuals select their steps randomly according to a specific distribution, influenced by various eological factors \cite{Andreu-Vaillo2000,Levin1985,Lutscher2010}. This observation motivates us to extend model \eqref{model} to a novel nonlocal model by replacing the Laplacian operator with an integral operator, where a dispersal kernel can be introduced to capture the nonlocal dispersal behavior. Exploring this revised model offers an intriguing avenue for future research.  (4) The present study assumes that the population habitat is bounded. A fascinating question in spatial ecology is how and at what rate a population spreads through an unbounded domain if it is able to persist. We plan to address this question by analyzing traveling wave solutions and the spreading speeds of system \eqref{model} in an unbounded spatial domain.

\bibliographystyle{abbrv}
\bibliography{reference}
\end{document}